\documentclass[final,onefignum,onetabnum]{siamart250211}
\usepackage{graphicx}
\usepackage{amsmath,multirow}
\usepackage{amssymb}
\usepackage{amscd}
\usepackage{amsfonts}
\usepackage{enumerate}
\usepackage{mathrsfs}
\usepackage{xypic}
\usepackage{stmaryrd}
\usepackage{graphicx}
\usepackage{fancybox}
\usepackage{color}
\usepackage{amsmath}
\usepackage{exscale}
\usepackage{enumitem,array}%
\usepackage{amssymb}
\usepackage{amsmath}
\usepackage{amssymb}
\usepackage{amscd}
\usepackage{amsfonts}
\usepackage{enumerate}
\usepackage{mathrsfs}
\usepackage{xy}
\usepackage{stmaryrd}
\usepackage{graphicx}
\usepackage{fancybox}
\usepackage{color}
\usepackage{multirow}
\usepackage{exscale}
\usepackage{enumitem,array}%
\usepackage{subfigure}
\usepackage{extarrows}
\usepackage{amsfonts}
\usepackage{xypic}
\usepackage{tikz}
\usepackage{algorithm}
\usepackage{algorithmicx}
\usepackage{algpseudocode}
\usepackage{manyfoot}
\usepackage{booktabs}
\usepackage{verbatim}
\usepackage{graphicx}
\usepackage{amsopn}

\usetikzlibrary{arrows,chains,matrix,positioning,scopes}
\makeatletter
\tikzset{join/.code=\tikzset{after node path={%
\ifx\tikzchainprevious\pgfutil@empty\else(\tikzchainprevious)%
edge[every join]#1(\tikzchaincurrent)\fi}}}
\makeatother
\tikzset{>=stealth',every on chain/.append style={join},
         every join/.style={->}}
\tikzstyle{labeled}=[execute at begin node=$\scriptstyle,
   execute at end node=$]

\newsiamremark{remark}{Remark}
\newsiamremark{hypothesis}{Hypothesis}
\crefname{hypothesis}{Hypothesis}{Hypotheses}
\newsiamthm{claim}{Claim}

\usepackage{amssymb,amsmath,geometry,color}
\usepackage{epstopdf}
\usepackage{epsfig,multirow}
\usepackage{color,cases}
\usepackage{mathtools}

\newtheorem{myalgorithm}{Algorithm}[section]
\DeclarePairedDelimiter{\norm}{\lVert}{\rVert}

\newcommand{\fe}{\mathrm{e}}
\renewcommand{\(}{\left(}
\renewcommand{\)}{\right)}
\newcommand{\eps}{\varepsilon}

\newcommand{\abs}[1]{\left\vert#1\right\vert}

\newcommand{\bw}{{\bf w}}

\newcommand{\bx}{\mathbf{x}}
\newcommand{\bv}{\mathbf{v}}
\newcommand{\bz}{\mathbf{z}}

\newcommand{\mh}{\mathfrak{h}}
\newcommand{\tbz}{\tilde{\mathbf{z}}}
\newcommand{\dtbz}{\dot{\tilde{\mathbf{z}}}}

\newcommand{\bE}{\mathbf{E}}
\newcommand{\bB}{\mathbf{B}}

\newcommand{\hbB}{\widehat{\mathbf{B}}}
\newcommand{\sgn}{\operatorname{sgn}}

\headers{Explicit relaxation PIC for Vlasov-Poisson equations}{L. Wang, B. Wang}

\title{Explicit relaxation Particle-in-Cell methods for Vlasov-Poisson equations with a strong magnetic field
\thanks{Submitted to the editors DATE.
\funding{B. Wang is supported by NSFC 12371403. }}}

\author{Lina Wang\thanks{School of Mathematics and Statistics, Xi'an Jiaotong University, 710049 Xi'an, China
  (wanglina@stu.xjtu.edu.cn).}
  \and Bin Wang\thanks{Corresponding author.
  School of Mathematics and Statistics, Xi'an Jiaotong University, 710049 Xi'an, China
  (wangbinmaths@xjtu.edu.cn).}
 }

\begin{document}

\maketitle

\begin{abstract}
In this work, we present a novel family of explicit relaxation Particle-in-Cell (ER-PIC) methods for the Vlasov-Poisson equation with a strong magnetic field. These schemes achieve exact energy conservation by combining a splitting framework with the dynamic updating of a relaxation parameter at each time step. Using an averaging technique, we rigorously establish second-order error bounds for the Strang-type ER-PIC method and uniform first-order accuracy in position for the Lie-Trotter  ER-PIC scheme.  Numerical experiments across the fluid, finite Larmor radius, and diffusion regimes confirm the accuracy and  energy conservation of our methods.
\end{abstract}

\begin{keywords}
Vlasov-Poisson equation, Strong magnetic field, Particle-in-Cell method, Energy conservation, Relaxation
\end{keywords}

\begin{AMS}
65M75, 35Q83, 76X05, 65L05, 65L20, 65L70
\end{AMS}

\section{Introduction}

The Vlasov equation describes the evolutions of the probability distribution function in high-dimensional phase space under a self-consistent or external electromagnetic field. It is widely used to simulating plasma dynamics in magnetic confinement devices such as tokamaks \cite{Bel,Mi}, where a strong external magnetic field needs to be applied in order to keep the particles on the desired tracks.
In this paper, we consider the Vlasov-Poisson equation with a strong non-homogeneous magnetic field (\cite{CCLMZ2,CCLMZ3}), which is also called fluid scaling:
\begin{subequations}\label{vp}
\begin{align}
&\partial_{t}f(t,\bx,\bv)+\bv\cdot\nabla_{\bx}f(t,\bx,\bv)+\left(\bE(t,\bx)+\frac{1}{\eps}\bv\times\bB(\bx)\right)\cdot\nabla_{\bv}f(t,\bx,\bv)=0, \label{vp-a} \\
&\nabla_{\bx}\cdot\bE(t,\bx)=\int_{\mathbb{R}^d}f(t,\bx,\bv)d\bv-n_{i}, \label{vp-b} \\
&f(0,\bx,\bv)=f_{0}(\bx,\bv), \label{vp-c}
\end{align}
\end{subequations}
where 
 $\bE(t,\bx)\in\mathbb{R}^{d}$ denotes the self-consistent electric-field function and the external magnetic field is denoted by (\cite{CCLMZ3,HLS})  
\begin{equation}\label{equ-9-28-2}
\bB:\bx\in \Omega\subset\mathbb{R}^{d}\mapsto  \bB(\bx)=\bB_{0}+\eps\bB_{1}(\bx) \in\mathbb{R}^{d}, \ d=2,3,
\end{equation}
with a constant vector $\bB_{0}$ and a uniformly bounded vector field $\bB_{1}(\bx)$. For a given $T>0$, 
$f:(t,\bx,\bv)\in [0,T]\times\mathbb{R}^{d}\times\mathbb{R}^{d}\mapsto f(t,\bx,\bv)\in \mathbb{R}$
is the unknown distribution function, $f_{0}(\bx,\bv)$ is a given initial distribution, $0<\eps\leq 1$ a dimensionless parameter inversely proportional to the strength of the magnetic field, and $n_{i}\geq 0$ the ion density of the background. It is noted that the so-called maximal ordering scaling case (\cite{Brizard,HLW}), where  
$
\bB/\eps =\bB(\eps \bx)/\eps,
$
can be decomposed as  
$
\bB =\bB(\eps \bx_0)+ (\bB(\eps \bx) - \bB(\eps \bx_0)),
$
in which the second term is bounded as $\mathcal{O}(\eps)$. 
Consequently, the framework developed in this paper naturally extends to this maximal ordering scaling.
As is well known, the Vlasov-Poisson system \eqref{vp} satisfies the following energy conservation law
\begin{equation}\label{vp-ener}
H(t)=\frac{1}{2}\int_{\mathbb{R}^d}\int_{\Omega}\abs{\bv}^{2}f(t,\bx,\bv)d\bx d \bv+\frac{1}{2}\int_{\Omega}\abs{\bE(t,\bx)}^{2}d\bx=H(0).
\end{equation}

For simulating plasma dynamics, the Particle-in-Cell (PIC) method is a cornerstone  \cite{CLS,LHSNQL}, which discretizes the distribution function into weighted macroparticles via the Klimontovich representation and tracks their trajectories \cite{L}. Building upon this framework, numerous classical methods have been developed for the Vlasov-Poisson system under a non-strong magnetic field, such as Hamiltonian splitting \cite{CCFM,GHS}, semi-Lagrangian schemes \cite{BS,CDM,Lu1}, and dynamical low-rank algorithms \cite{Lu2,Lu3}. However, for the strong magnetic field case, a significant challenge for the Vlasov-Poisson equation arises, where the small parameter $0<\eps\ll 1$ induces rapid gyration of charged particles (with a cyclotron period of order $\eps$) and confinement along field lines. This causes the solution of \eqref{vp} to become highly oscillatory in time, posing substantial difficulties for numerical simulation. To address this, various methods have been proposed, including time splitting \cite{CCZ}, Crank-Nicolson \cite{RC}, and asymptotic-preserving (AP) schemes \cite{CLM,FR1,FR2,FRZ,FHLS}. However, these conventional approaches generally fail to provide uniform accuracy with respect to $\eps$.
In response, a new class of uniformly accurate (UA) methods has emerged, whose error bounds and permissible time-steps are both independent of $\eps$. Prominent examples include forward semi-Lagrangian methods \cite{CLMZ1}, multi-revolution composition methods \cite{CCZ,CCLMZ3}, and two-scale methods \cite{CCLMZ2,CCLMZ3,CLMZ2}. Despite their success in achieving uniform accuracy, these UA methods often suffer from a critical limitation: the lack of  long-term near/exact energy conservation. This deficiency is frequently linked to numerical instabilities arising from the spatial discretization on finite grids, which can lead to a spurious, monotonic increase in the total energy \cite{BL}.
 
To mitigate the long-term numerical drift of energy, geometric Particle-in-Cell (PIC) methods have been developed \cite{CC2,CCB,CC1,G,KKMS,L2,GeoHybridPIC,ML,RH}. A prominent subclass of these methods, implicit energy-preserving (EP) PIC schemes \cite{CC2,CCB,CC1,L2,ML}, achieves exact energy conservation at the cost of solving a large nonlinear system. This computational burden has spurred interest in explicit EP schemes, which have been successfully developed for scenarios without strong magnetic fields \cite{G,RH}.
A powerful strategy for constructing such EP schemes is the relaxation approach, which modifies a base method to enforce energy conservation without sacrificing accuracy \cite{K,Ld1,Ld2,RSDPK}. This idea has been applied to the Vlasov-Maxwell equations in non-strong magnetic field regimes using a Hermite-discontinuous Galerkin framework \cite{PMKDR}. The principle also extends to the core of the PIC method: the discretization of the distribution function into a system of charged-particle dynamics (CPD). For CPD in strong magnetic fields, implicit EP  schemes are established in \cite{WZ,Yin}, and explicit EP methods  are developed in \cite{LW} by using the relaxation technique, where a uniform error bound is rigorously derived for the first-order scheme under the maximal ordering scaling.  
Although these EP methods are state-of-the-art in the energy conservation, there are some critical gaps. For the Vlasov-Poisson system under a strong magnetic field, EP schemes, especially explicit ones, have yet to be developed. Furthermore, while error analysis exists for first-order  schemes \cite{LW}, a rigorous analysis for second-order explicit EP methods in the highly oscillatory regime is notably absent. This work is dedicated to closing these gaps.

 In this paper, we introduce a new class of explicit relaxation Particle-in-Cell (ER-PIC) methods for the Vlasov-Poisson system \eqref{vp} in the strong magnetic field. 
The approach first applies the PIC method to the Vlasov-Poisson equation \eqref{vp} and then incorporates the relaxation technique into the time discretization of the CPD system, yielding two novel relaxation PIC schemes. The new ER-PIC schemes offer several significant advantages:
\begin{itemize}
  \item 
  The ER-PIC schemes are completely explicit, thus circumventing the high computational cost associated with solving nonlinear systems in traditional implicit or semi-implicit methods.

  \item They preserve the discrete total energy exactly, which guarantees excellent long-term numerical behavior and stability.
  \item The rigorous convergence analysis, based on an averaging technique, establishes that in approximating the particle position, the first ER-PIC scheme has a uniform error bound  and the second one achieves  second-order accuracy.
      
  \item  
The proposed methods demonstrate robust performance for the Vlasov-Poisson system under various  scaling regimes, such as the fluid, Larmor and diffusion regimes. Numerical experiments confirm the efficacy of these methods in all considered cases.
\end{itemize}

The remainder of this paper is organized as follows. Section \ref{sec-2} formulates the explicit relaxation PIC (ER-PIC) schemes, which combine PIC, splitting, and relaxation techniques to achieve exact energy conservation. In Section \ref{sec-3}, we provide a rigorous error analysis for the first- and second-order ER-PIC methods using an averaging technique. Section \ref{sec-4} presents numerical experiments that validate the scheme’s accuracy and energy conservation for the Vlasov-Poisson system under various scalings.  Some conclusions are included in the last section.

\section{Explicit relaxation Particle-in-Cell (ER-PIC) method}\label{sec-2}

In this section, we develop a new class of explicit relaxation Particle-in-Cell methods which begin with a brief review of the PIC method (\cite{Sonn}).

\subsection{PIC framework}

In the PIC method, the unknown distribution $f(t,\bx,\bv)$ of \eqref{vp} is approximated by a sum of Dirac masses centered at $(\bx_{k}(t),\bv_{k}(t))$ with weights $\omega_{k}>0$ for $k=1,\ldots, N_{p}\ (N_{p}\in\mathbb{N}^{+})$ as
\begin{equation}\label{equ-9-28-4}
f_{p}(t,\bx,\bv)=\sum\limits_{k=1}^{N_{p}}\omega_{k}\delta(\bx-\bx_{k}(t))\delta(\bv-\bv_{k}(t)), \quad t\geq 0, \ (\bx,\bv)\in\Omega\times\mathbb{R}^{d}.
\end{equation}
Inserting \eqref{equ-9-28-4} into \eqref{vp}, in the sense of distribution, each particle obeys the characteristic equation
\begin{equation}\label{equ-9-28-5}
\begin{aligned}
\dot{\bx}_{k}(t)=\bv_{k}(t), \ \dot{\bv}_{k}(t)=\bE(t,\bx_{k}(t))+\frac{1}{\eps}\bv_{k}(t)\times\bB(\bx_{k}(t)), \quad \bx_{k}(0)=\bx_{k,0}, \ \bv_{k}(0)=\bv_{k,0}, \ t>0. 
\end{aligned}
\end{equation}
For $k=1,\ldots,N_{p}$, the weight $\omega_{k}$ and initial particle states $\bx_{k,0}, \bv_{k,0}$ are assigned according to the initial distribution $f_{0}(\bx,\bv)$ given by \eqref{vp-c}.
  A general approach employs Monte Carlo rejection sampling \cite{Sonn}. The weights $\omega_k$ must satisfy the normalization condition obtained by integrating \eqref{equ-9-28-4} over the entire domain at $t=0$:
  $\sum_{k=1}^{N_{p}}\omega_{k}=\int_{\Omega\times\mathbb{R}^{d}}f_{0}(\bx,\bv)d\bx d\bv.$
  Consequently, a simple choice is to assign uniform weights to all particles:
  $$\omega_{k}=\frac{1}{N_{p}}\int_{\Omega\times\mathbb{R}^{d}}f_{0}(\bx,\bv)d\bx d\bv, \quad k=1,\ldots,N_{p}.$$
  The characteristic equations \eqref{equ-9-28-5} are coupled with the Poisson equation \eqref{vp-b} through the electric field $\bE(t,\bx)$. At any time $t>0$, given the particle positions $\{\bx_{k}(t)\}_{k=1}^{N_{p}}$, the Poisson equation
  $$\nabla_{\bx}\cdot \bE_{p}(t,\bx)=\sum_{k=1}^{N_{p}}\omega_{k}\delta(\bx-\bx_{k}(t))-n_{i}, \quad \bx\in\Omega,$$
  is solved numerically on a spatial mesh grid with a step size $\triangle x$. Notably, the time dependence of the electric field arises solely from the particle positions, which can be expressed as $\bE(t,\bx)=\bE_{[\bx^{p}(t)]}(\bx)$, where $\bx^{p}(t)=(\bx_{1}(t),\ldots,\bx_{N_{p}}(t))$.
 Utilizing the PIC framework, the continuous total energy, derived from the distribution function \eqref{equ-9-28-4}, is discretized as:
  \begin{equation}\label{equ-9-30-1}
  H(t)=\frac{1}{2}\sum_{k=1}^{N_{p}}\omega_{k}\abs{\bv_{k}}^{2}+\frac{1}{2}\int_{\Omega}\abs{\bE_{[\bx^{p}(t)]}(\bx)}^{2}d\bx.
  \end{equation}

\subsection{Construction of ER-PIC method}
In this subsection, we present explicit relaxation schemes, constructed by introducing a relaxation parameter into the symmetric St\"{o}rmer-Verlet method. Let $h=\triangle t>0$ denote the time step and $t_{n}=nh$ for $n\in\mathbb{N}$. The system \eqref{equ-9-28-5} is first decomposed into two subflows (the particle index $k$ is omitted for brevity):
\begin{equation}\label{equ-9-30-3}
\dot{\bx}(t)=0, \dot{\bv}(t)=\bv(t)\times \dfrac{\bB(\bx(t))}{\eps}, \ \bx(0)=\bx_{0}, \ \bv(0)=\bv_{0},
\end{equation}
and
\begin{equation}\label{equ-9-30-4}
\dot{\bx}(t)=\bv(t), \dot{\bv}(t)=\bE_{[\bx^{p}(t)]}(\bx(t)), \ \bx(0)=\bx_{0}, \ \bv(0)=\bv_{0}.
\end{equation}
For the first flow \eqref{equ-9-30-3}, we can get the exact $h$-flow 
$
\psi_{h}^{[1]}:\begin{pmatrix} \bx(t)\\ \dot{\bx}(t) \end{pmatrix}=\begin{pmatrix} \bx_{0} \\ \fe^{\frac{h}{\eps}\widehat{\bB}(\bx_{0})}\dot{\bx}_{0} \end{pmatrix},
$
where the skew symmetric matrix $\widehat{\bB}$ is given as
$
\widehat{\bB}(\bx)=\begin{pmatrix}
                0 & b_{3}(\bx) & -b_{2}(\bx) \\
                -b_{3}(\bx) & 0 & b_{1}(\bx) \\
                b_{2}(\bx) & -b_{1}(\bx) & 0 
              \end{pmatrix}
$
with the magnetic field $\bB=(b_{1},b_{2},b_{3})^{\intercal}\in\mathbb{R}^{3}$. By applying the Rodriguez formula \cite{HLW,KKMS}, the matrix exponential $\fe^{h\widehat{\bB}}$ can be computed efficiently. For the second subflow \eqref{equ-9-30-4}, which is equivalent to a nonlinear second-order Hamiltonian system $\ddot{\bx}(t)=\bE_{[\bx^{p}(t)]}(\bx(t))$ with $\bx(0)=\bx_{0}, \dot{\bx}(0)=\dot{\bx}_{0}=\bv_{0}$, an approximate $h$-flow $\psi_{h}^{[2]}$ is developed. To achieve exact energy conservation, we choose a classical explicit  symmetric  second-order St\"{o}rmer-Verlet (SV) method as the underlying approach and incorporate a relaxation parameter in its update stage, which by denoting $\bx^{n}\approx \bx(t_{n})$ and $\dot{\bx}^{n}\approx \dot{\bx}(t_{n})$ is given as
\begin{equation}\label{equ-10-1-1}
\begin{aligned}
\psi_{h}^{[2]}: \ & X^{1}=\bx^{n}+\dfrac{h}{2}\dot{\bx}^{n}, \ \bx^{n+1}=\bx^{n}+h\dot{\bx}^{n}+\dfrac{h^{2}}{2}\bE_{[X^{1}]}(X^{1}), \ \dot{\bx}^{n+1}=\dot{\bx}^{n}+h(1+\gamma_{n})\bE_{[X^{1}]}(X^{1}),
\end{aligned}
\end{equation}
where the relaxation parameter $\gamma_{n}$ is determined by the condition of energy conservation
\begin{equation}\label{equ-10-1-2}
H(\bx^{n+1},\dot{\bx}^{n+1})-H(\bx^{n},\dot{\bx}^{n})=0.
\end{equation}
The one-step numerical approximation  yielded by the underlying SV method without relaxation is denoted by $\tilde{\bx}^{n+1}$ and $\dot{\tilde{\bx}}^{n+1}$.
Then the system \eqref{equ-10-1-1} can be expressed equivalently as
\begin{equation}\label{equ-10-1-3}
\begin{aligned}
& X^{1}=\bx^{n}+\dfrac{h}{2}\dot{\bx}^{n}, \ \tilde{\bx}^{n+1}=\bx^{n}+h\dot{\bx}^{n}+\dfrac{h^{2}}{2}\bE_{[X^{1}]}(X^{1}), \ \dot{\tilde{\bx}}^{n+1}=\dot{\bx}^{n}+h\bE_{[X^{1}]}(X^{1}), \\
& \bx^{n+1}=\tilde{\bx}^{n+1}, \ \dot{\bx}^{n+1}=\dot{\tilde{\bx}}^{n+1}+h\gamma_{n}\bE_{[X^{1}]}(X^{1}).
\end{aligned}
\end{equation}
Inserting the equations \eqref{equ-9-30-1} and \eqref{equ-10-1-3} into \eqref{equ-10-1-2} yields
\begin{equation*}
\begin{aligned}
&H(\bx^{n+1},\dot{\bx}^{n+1})-H(\bx^{n},\dot{\bx}^{n})
=\frac{1}{2}\sum\limits_{k=1}^{N_{p}}\omega_{k}\Big(\abs{\dot{\bx}_{k}^{n+1}}^{2}-\abs{\dot{\bx}_{k}^{n}}^{2}\Big)+\frac{1}{2}\int_{\Omega}\abs{\bE_{[\bx^{n+1}]}(\bx^{n+1})}^{2}d\bx
-\frac{1}{2}\times \\
&\int_{\Omega}\abs{\bE_{[\bx^{n}]}(\bx^{n})}^{2}d\bx =H(\tilde{\bx}^{n+1},\dot{\tilde{\bx}}^{n+1})-H(\tilde{\bx}^{n},\dot{\tilde{\bx}}^{n})+h\gamma_{n}\sum\limits_{k=1}^{N_{p}}\omega_{k}(\bE_{[X^{1}]}(X^{1}_{k}))^{\intercal}
\Big(\dot{\tilde{\bx}}_{k}^{n+1}+\frac{1}{2}h\gamma_{n}\bE_{[X^{1}]}(X^{1}_{k})\Big).
\end{aligned}
\end{equation*}
Denoting $\widetilde{H}:=H(\tilde{\bx}^{n+1},\dot{\tilde{\bx}}^{n+1})-H(\tilde{\bx}^{n},\dot{\tilde{\bx}}^{n})$ and 
\begin{equation*}
\mathcal{A}=\sum\limits_{k=1}^{N_{p}}\omega_{k}(\bE_{[X^{1}]}(X^{1}_{k}))^{\intercal}\bE_{[X^{1}]}(X^{1}_{k}), \quad \mathcal{C}=\sum\limits_{k=1}^{N_{p}}\omega_{k}(\bE_{[X^{1}]}(X^{1}_{k}))^{\intercal}\dot{\tilde{\bx}}_{k}^{n+1},
\end{equation*}
then the relaxation parameter $\gamma_{n}$ can be determined explicitly as $\gamma_{n}=(-\mathcal{C}\pm\sqrt{\mathcal{C}^{2}-2\mathcal{A}\widetilde{H}})/h\mathcal{A}$.
Provided that the relaxation parameter $\gamma_{n}$ is sufficiently small, the SV method with relaxation can maintain the algebraic order of the underlying SV method. Since different roots may arise in $\gamma_{n}$, we choose the relaxation parameter $\gamma_{n}$ as
\begin{equation}\label{equ-10-1-6}
\gamma_{n}=\begin{cases}
  0, & \mbox{if } \mathcal{C}^{2}-2\mathcal{A}\widetilde{H}<0, \\
  (-\mathcal{C}+\sgn(\mathcal{C})\sqrt{\mathcal{C}^{2}-2\mathcal{A}\widetilde{H}})/h\mathcal{A}, & \mbox{if } \mathcal{C}^{2}-2\mathcal{A}\widetilde{H}\geq 0,
\end{cases}
\end{equation}
where $\sgn$ is a sign function.
The SV method \eqref{equ-10-1-1} with a relaxation parameter $\gamma_{n}$ is referred to as relaxation St\"{o}rmer-Verlet (RSV) method. The following theorem establishes the algebraic order of the relaxation parameter $\gamma_{n}$ and the resulting RSV method. The proof is analogous to that in \cite{LW} and is therefore omitted.

\begin{theorem}\label{the-1}
For a sufficiently small time step size $h>0$, the relaxation parameter $\gamma_{n}$ in \eqref{equ-10-1-6} is second-order accurate. Consequently, the RSV method is also second-order.
\end{theorem}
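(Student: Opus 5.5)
We treat the relaxation as a small perturbation of the underlying Störmer--Verlet step for the subflow \eqref{equ-9-30-4}, with $h$ small and $\eps$ fixed. The proof has three steps: bound the energy defect $\widetilde{H}$ of SV, show that the selected root \eqref{equ-10-1-6} is the small one so that $\gamma_n=\mathcal{O}(h^{2})$, and insert this into \eqref{equ-10-1-3}.

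\emph{Step 1: the energy defect.} Since $\ddot{\bx}=\bE_{[\bx^{p}]}(\bx)$ is a Hamiltonian system with energy \eqref{equ-9-30-1}, the exact flow conserves $H$. The SV step is second order, so its local error is $\mathcal{O}(h^{3})$ in both position and velocity. Assume $H$ is smooth, i.e.\ the discrete Poisson solve gives a field that is $C^{1}$ in the particle positions. A Taylor expansion of $H$ about the exact solution then gives $\widetilde{H}=\mathcal{O}(h^{3})$.

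\emph{Step 2: the size of $\gamma_n$.} We use that $\mathcal{A}$ and $\mathcal{C}$ are $\mathcal{O}(1)$, and $\mathcal{A}$ is bounded below in the generic case $\bE\neq 0$. The first case of \eqref{equ-10-1-6} gives $\gamma_n=0$, which is trivially fine. In the second case we rationalize:
\[
\gamma_n=\frac{-\mathcal{C}+\sgn(\mathcal{C})\sqrt{\mathcal{C}^{2}-2\mathcal{A}\widetilde{H}}}{h\mathcal{A}}
=\frac{-2\widetilde{H}}{h\bigl(\mathcal{C}+\sgn(\mathcal{C})\sqrt{\mathcal{C}^{2}-2\mathcal{A}\widetilde{H}}\bigr)}.
\]
The sign choice avoids cancellation, since the denominator has modulus at least $h\abs{\mathcal{C}}$. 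If $\abs{\mathcal{C}}\geq c_0>0$, we get $\gamma_n=\mathcal{O}(\widetilde{H}/h)=\mathcal{O}(h^{2})$.

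The case where $\mathcal{C}$ is small is the main obstacle. In the $\sgn(\mathcal{C})$ form the denominator is at least $h\sqrt{\mathcal{C}^{2}-2\mathcal{A}\widetilde{H}}$, but this can vanish as $\mathcal{C}\to0$. I would therefore use the original numerator form and bound $\abs{\gamma_n}\leq 2\sqrt{\abs{\mathcal{C}^{2}-2\mathcal{A}\widetilde{H}}}/(h\mathcal{A})$. This yields only $\mathcal{O}(h^{1/2})$ when $\mathcal{C}=\mathcal{O}(h^{3/2})$. So one must either impose the nondegeneracy assumption $\abs{\mathcal{C}}\geq c_0$, following \cite{LW}, or note that near a stationary point the energy-increment structure itself makes $\widetilde{H}$ smaller. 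I would adopt the nondegeneracy assumption and state it explicitly as the hypothesis under which ``$h$ sufficiently small'' is meant.

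\emph{Step 3: order of the scheme.} By \eqref{equ-10-1-3}, RSV and SV give the same position. The velocities differ by $h\gamma_n\bE_{[X^{1}]}(X^{1})=\mathcal{O}(h^{3})$, which is the same size as the local error of SV. Hence the local error of RSV remains $\mathcal{O}(h^{3})$.

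The global error then follows from Lipschitz stability of the one-step map and the standard Lady Windermere's fan argument: the $n\leq T/h$ local errors accumulate to global second order. Finally, composing with the exact flow $\psi^{[1]}_h$ of \eqref{equ-9-30-3} does not change the order of the $\psi_h^{[2]}$ component, which is the statement of Theorem~\ref{the-1}.
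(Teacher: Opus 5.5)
Your argument is correct and follows the standard relaxation route that the paper omits and defers to \cite{LW}. The steps are: $\widetilde{H}=\mathcal{O}(h^{3})$ from second-order St\"{o}rmer--Verlet plus exact energy conservation of the subflow; the rationalized $\sgn(\mathcal{C})$-root giving $\abs{\gamma_n}\le 2\abs{\widetilde{H}}/(h\abs{\mathcal{C}})=\mathcal{O}(h^{2})$; and an $\mathcal{O}(h^{3})$ velocity-only perturbation that leaves the local error, and hence the global order, unchanged.

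The hypothesis $\abs{\mathcal{C}}\geq c_0>0$ that you add is a condition the theorem tacitly needs, not a weakness of your proof. If $\abs{\mathcal{C}}\ll h^{3/2}$ while $\widetilde{H}$ is of exact order $h^{3}$, the energy condition (quadratic in $\gamma_n$) has either no real root or only roots of size $h^{1/2}$. The paper's own remark that the fallback $\gamma_n=0$ is triggered at some steps shows this regime does occur. The only slip is your side bound in that regime, which should read $\abs{\gamma_n}\le\max\{\abs{\mathcal{C}},\sqrt{\mathcal{C}^{2}-2\mathcal{A}\widetilde{H}}\}/(h\mathcal{A})$; this still gives $\mathcal{O}(h^{1/2})$.
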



Denote the numerical solution $\bx^{n}\approx\bx(t_{n})$, $\dot{\bx}^{n}\approx\dot{\bx}(t_{n})$ and choose $\bx^{0}=\bx_{0}$, $\dot{\bx}^{0}=\dot{\bx}_{0}$. We end up with the following full scheme which conserves energy through the combination of $\psi_{h}^{[1]}$ and $\psi_{h}^{[2]}$. 
\begin{myalgorithm}\label{algo-1} (\textbf{ER-PIC method})
Given an initial distribution function $f_{0}(\bx,\bv)$, the explicit relaxation energy-conserving Particle-in-Cell Method reads as follows:
\begin{enumerate}[label=(\roman*)]
\item Compute the density $\rho(t,\bx)=\sum\limits_{k=1}^{N_{p}}\omega_{k}\delta(\bx-\bx_{k}(t))$ on the grid points.
\item Update the electric field $E(t,\mathbf{x})$ on the grid by solving the Poisson equation spectrally and compute magnetic field $\bB(\bx)$.
\item Interpolate the electric field at the particles position.
\item Update the position and velocity of particles using the Lie-Trotter splitting scheme $\psi^{[2]}_{h}\circ \psi^{[1]}_{h}$ (referred as RS1-PIC) or Strang splitting scheme $\psi^{[1]}_{h/2}\circ \psi^{[2]}_{h} \circ \psi^{[1]}_{h/2}$ (referred as RS2-PIC).
\item Repeat steps (i)-(iv) until the final time $T$.
\end{enumerate}
\end{myalgorithm}

The energy conservation of Algorithm \ref{algo-1} is stated as follows.
\begin{proposition}
Algorithm \ref{algo-1} exactly preserves the energy \eqref{equ-9-30-1} at the discrete level, i.e., for $n\in\mathbb{N}$, $H(\bx^{n},\dot{\bx}^{n})=H(\bx^{0},\dot{\bx}^{0})$.
\end{proposition}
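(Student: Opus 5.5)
The plan is to show that each substep of the composition leaves $H$ in \eqref{equ-9-30-1} unchanged. Induction on $n$ then gives $H(\bx^{n},\dot{\bx}^{n})=H(\bx^{0},\dot{\bx}^{0})$. Both RS1-PIC ($\psi^{[2]}_{h}\circ\psi^{[1]}_{h}$) and RS2-PIC ($\psi^{[1]}_{h/2}\circ\psi^{[2]}_{h}\circ\psi^{[1]}_{h/2}$) are compositions of the maps $\psi^{[1]}_{\tau}$ (with $\tau=h$ or $h/2$) and $\psi^{[2]}_{h}$. It therefore suffices to prove $H\circ\psi^{[1]}_{\tau}=H$ and $H\circ\psi^{[2]}_{h}=H$ for every particle configuration.

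For $\psi^{[1]}_{\tau}$, the positions are unchanged, $\bx_k\mapsto\bx_k$. Hence the configuration $\bx^{p}$ is unchanged, and so are the field $\bE_{[\bx^{p}]}$ and the term $\frac12\int_\Omega|\bE|^2d\bx$. Each velocity is mapped to $\fe^{\frac{\tau}{\eps}\widehat{\bB}(\bx_k)}\dot{\bx}_k$. Since $\widehat{\bB}(\bx_k)$ is skew-symmetric, its exponential is orthogonal, so $|\dot{\bx}_k|$ is preserved for each $k$. Therefore the kinetic part $\frac12\sum_k\omega_k|\dot{\bx}_k|^2$ is unchanged. Note that the magnetic field does no work, so this holds for any $\bB_1$.

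For $\psi^{[2]}_{h}$, I would use the computation preceding \eqref{equ-10-1-6}. With $\widetilde H$, $\mathcal A$, $\mathcal C$ as defined there, the energy difference is the quadratic $q(\gamma_n)=\widetilde H+h\gamma_n\,\mathcal C+\frac12 h^2\gamma_n^2\mathcal A$ in $\gamma_n$, where $\widetilde H$ should be read as the SV energy increment from the current state. If the discriminant $\mathcal C^2-2\mathcal A\widetilde H$ is nonnegative and $\mathcal A>0$, then the choice \eqref{equ-10-1-6} is one of the two roots $(-\mathcal C\pm\sqrt{\mathcal C^2-2\mathcal A\widetilde H})/(h\mathcal A)$. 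Substituting it gives $q(\gamma_n)=0$ exactly, so $H$ is conserved across $\psi^{[2]}_h$.

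The main obstacle is the degenerate branches of \eqref{equ-10-1-6}: a negative discriminant, where $\gamma_n=0$, and $\mathcal A=0$. My plan is to invoke Theorem \ref{the-1}, which states that $\gamma_n$ is well defined and $\mathcal O(h^2)$ for sufficiently small $h$. This implicitly means the real-root branch is taken. Indeed $\widetilde H=\mathcal O(h^3)$ for the second-order SV step while $\mathcal C=\mathcal O(1)$ generically, so $\mathcal C^2-2\mathcal A\widetilde H>0$. If $\mathcal A=0$, then $\bE=0$ at all particles, so $\psi^{[2]}_h$ reduces to free streaming. In that case exact conservation would need $\widetilde H=0$, and I would state this as the implicit solvability assumption under which the proposition holds. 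I would therefore phrase the proof as: assume the relaxation equation \eqref{equ-10-1-2} admits a real root at every step, which Theorem \ref{the-1} guarantees for small $h$. Chaining the three invariances step by step then completes the induction.
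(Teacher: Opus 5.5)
Your argument is essentially the paper's: $H$ is invariant under $\psi^{[1]}_{\tau}$ because the positions (hence $\bE_{[\bx^{p}]}$) are frozen and $\fe^{\frac{\tau}{\eps}\widehat{\bB}(\bx_k)}$ is orthogonal, and it is invariant under $\psi^{[2]}_{h}$ by the relaxation condition \eqref{equ-10-1-2}; these invariances are chained over the substeps, and the paper writes this out only for RS2-PIC. Stating the real-root condition as an explicit hypothesis is more careful than the paper, which simply invokes \eqref{equ-10-1-2}, but drop the claim that Theorem \ref{the-1} guarantees it: $\gamma_n=0$ satisfies the $\mathcal{O}(h^2)$ bound trivially, and nothing excludes steps with $\mathcal{C}\approx 0$ and $\widetilde{H}>0$, however small $h$ is, so the discriminant can be negative at isolated steps (the paper's numerical section reports such steps, where $\gamma_n=0$ is used and conservation is only approximate).
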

\begin{proof}
Denote the one-step numerical solutions in RS2-PIC
\begin{equation}\label{equ-10-3-6}
(\bx^{0},\dot{\bx}^{0})\xmapsto{\psi_{h/2}^{[1]}}(\bx^{1/2}_{1},\dot{\bx}^{1/2}_{1})\xmapsto{\psi_{h}^{[2]}}(\bx^{1/2}_{2},\dot{\bx}^{1/2}_{2})\xmapsto{\psi_{h/2}^{[1]}}
(\bx^{1},\dot{\bx}^{1}).
\end{equation}
Then we obtain $
               \begin{pmatrix} \bx^{1/2}_{1} \\ \dot{\bx}^{1/2}_{1} \end{pmatrix}=
               \begin{pmatrix}
               \bx^{0} \\
               \fe^{\frac{h}{2\eps}\widehat{\bB}(\bx^{0})}\dot{\bx}^{0}
               \end{pmatrix}
              $
and           $
               \begin{pmatrix} \bx^{1} \\ \dot{\bx}^{1} \end{pmatrix}=
               \begin{pmatrix}
               \bx^{1/2}_{2} \\
               \fe^{\frac{h}{2\eps}\widehat{\bB}(\bx^{1/2}_{2})}\dot{\bx}^{1/2}_{2}
               \end{pmatrix}
              $.
Since the matric $\widehat{\bB}(\bx)$ is skew symmetry, it is derived that
\begin{equation*}
\begin{aligned}
H(\bx^{1/2}_{1},\dot{\bx}^{1/2}_{1})=&\frac{1}{2}\sum\limits_{k=1}^{N_{p}}\omega_{k}\abs{\dot{\bx}^{1/2}_{k,1}}^{2} +\frac{1}{2}\int_{\Omega_{x}}\abs{\bE_{[\bx^{1/2}_{1}]}(\bx^{1/2}_{1})}^{2}d\bx =\frac{1}{2}\sum\limits_{k=1}^{N_{p}}\omega_{k}\left(\fe^{\frac{h}{2\eps}\widehat{\bB}(\bx^{0}_{k})}\dot{\bx}^{0}_{k}\right)^{\intercal}
\left(\fe^{\frac{h}{2\eps}\widehat{\bB}(\bx^{0}_{k})}\dot{\bx}^{0}_{k}\right) \\
+&\frac{1}{2}\int_{\Omega}\abs{\bE_{[\bx^{0}]}(\bx^{0})}^{2}d\bx =\frac{1}{2}\sum\limits_{k=1}^{N_{p}}\omega_{k}\abs{\dot{\bx}^{0}_{k}}^{2}+\frac{1}{2}\int_{\Omega}\abs{\bE_{[\bx^{0}]}(\bx^{0})}^{2}d\bx
=H(\bx^{0},\dot{\bx}^{0}).
\end{aligned}
\end{equation*}
Analogously, we have $H(\bx^{1/2}_{2},\dot{\bx}^{1/2}_{2})=H(\bx^{1},\dot{\bx}^{1})$. Together with the result $$H(\bx^{1/2}_{1},\dot{\bx}^{1/2}_{1})=H(\bx^{1/2}_{2},\dot{\bx}^{1/2}_{2})$$ from \eqref{equ-10-1-2},   this directly implies the energy conservation property of Algorithm \ref{algo-1}.
\end{proof}

\section{Global convergence}\label{sec-3}

This section presents the convergence results for the proposed ER-PIC schemes. For notational simplicity, the symbol $A\lesssim B$ is used throughout to denote $A\leq CB$, where $C>0$ is a generic constant independent of $\eps$ or the time step $h$ or $n$. Additionally, $\tau_{s}^{n}$ denotes intermediate time values that may vary from line to line in the proofs. The notation $\abs{\cdot}$ denotes the Euclidean norm for a vector or matrix.

\subsection{Main result}
The main convergence results for the two ER-PIC methods are summarized in the following theorem.

\begin{theorem}\label{theo-2}
Assume that $\bB(\cdot), \bE(\cdot)\in C^{1}(\mathbb{R}^d)$. For a fixed time $T>0$, let $\bx^{n}, \dot{\bx}^{n}$ be the numerical solution of RS1-PIC or RS2-PIC for solving \eqref{equ-9-28-5} up to $T$. Then there exists a constant $N_0>0$ independent of $\eps$, such that for any integer $N\geq N_0$ and the time step $h=\frac{T_{0}}{N}\eps$, we have for some $m_0>0$ arbitrarily large
\begin{equation}\label{opt-err1}
\begin{aligned}
\textmd{RS1-PIC}:\ &\abs{\bx^{n}-\bx(t_{n})}+\eps\abs{\dot{\bx}^{n}-\dot{\bx}(t_{n})} \lesssim h+N^{-m_0},  \\
\textmd{RS2-PIC}: \ &\abs{\bx^{n}-\bx(t_{n})}+\eps\abs{\dot{\bx}^{n}-\dot{\bx}(t_{n})} \lesssim  h^{2}/\eps+N^{-m_0}, \quad 
0\leq n \leq T/h.
\end{aligned}
\end{equation}
\end{theorem}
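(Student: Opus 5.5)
The plan is to use the averaging (modulated-variable) technique of \cite{CCLMZ3,LW}. First filter out the fast rotation generated by $\bB_{0}$. With $\widehat{\bB}_{0}$ the skew matrix of $\bB_{0}$, set $\mathbf{w}(t)=\fe^{-\frac{t}{\eps}\widehat{\bB}_{0}}\dot{\bx}(t)$ and write the system \eqref{equ-9-28-5} in the variables $(\bx,\mathbf{w})$:
\begin{equation*}
\dot{\bx}=\fe^{\frac{t}{\eps}\widehat{\bB}_{0}}\mathbf{w},\qquad
\dot{\mathbf{w}}=\fe^{-\frac{t}{\eps}\widehat{\bB}_{0}}\Big(\bE(\bx)+\dot{\bx}\times \bB_{1}(\bx)\Big).
\end{equation*}
The right-hand side is bounded uniformly in $\eps$ and is $2\pi\eps$-periodic in its fast time dependence, since $\fe^{s\widehat{\bB}_{0}}$ is periodic in $s$. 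Standard energy arguments give $|\bx|+|\dot{\bx}|\lesssim 1$ on $[0,T]$. The same transformation is applied to the numerical flows. The rotation $\psi^{[1]}_{h}$ factors as $\fe^{\frac{h}{\eps}\widehat{\bB}_{0}}\fe^{\frac{h}{\eps}\widehat{\bB}_{0}+h\widehat{\bB}_{1}(\bx)}\fe^{-\frac{h}{\eps}\widehat{\bB}_{0}}$ up to $\mathcal{O}(h)$. The commutator is $\mathcal{O}(h^{2}/\eps)=\mathcal{O}(h)$, since $h\sim\eps$.

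In the filtered variables, one step of RS1-PIC and RS2-PIC is a bounded perturbation of the identity. The relaxation modification is $h\gamma_{n}\bE$. By Theorem~\ref{the-1}, $\gamma_{n}=\mathcal{O}(h^{2})$, so this adds only an $\mathcal{O}(h^{3})$ change to the velocity per step. Exact energy conservation from the Proposition then gives uniform boundedness of $\dot{\bx}^{n}$ along the numerical trajectory.

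The key step is the local error together with cancellation over one fast period. Since $h=T_{0}\eps/N$, exactly $N$ steps cover one period $T_{0}\eps$ of the rotation $\fe^{t\widehat{\bB}_{0}/\eps}$ (taking $T_{0}=2\pi/|\bB_{0}|$). I will expand the local defect $\delta_{n}$ of the filtered scheme.

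For RS1 the defect has the form $h\,g(t_{n}/\eps,\cdot)+\mathcal{O}(h^{2})$, where $g$ is a periodic function with zero mean. It comes from the Lie splitting commutator between rotation and drift. Summing over $N$ consecutive steps of one period gives the Riemann sum of a zero-mean periodic smooth function. By the trapezoidal-rule spectral accuracy for periodic functions, this sum is $\mathcal{O}(N^{-m_{0}})$ times $h$. For RS2, symmetry kills the $h^{2}$ term up to a periodic zero-mean part. The residual is $\mathcal{O}(h^{3}/\eps^{2})$ per step, so per period it accumulates to $\mathcal{O}(h^{2}/\eps)\cdot\eps$. Over $T/\eps$ periods this yields $h^{2}/\eps$ for $\bx$.

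The $\eps$-weighted velocity error follows from $\dot{\bx}=\fe^{t\widehat{\bB}_{0}/\eps}\mathbf{w}$. The factor $\eps$ compensates for the $1/\eps$ amplification in converting $\mathbf{w}$-errors and the $\bx$-dependence of $\bB$.

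Global error then follows from a discrete Gronwall argument on $e_{n}=|\bx^{n}-\bx(t_{n})|+\eps|\mathbf{w}^{n}-\mathbf{w}(t_{n})|$. The stability constant of the filtered one-step map is $1+Ch$, using Lipschitz bounds of $\bE$ and $\bB_{1}$ from the $C^{1}$ assumption. A priori boundedness comes from energy conservation, and the PIC field is Lipschitz for fixed $\Delta x$. The local defects are grouped period by period, using summation by parts: the zero-mean oscillatory parts are absorbed as differences of bounded correctors, which yields the stated bounds.

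I expect the main obstacle to be this period-wise cancellation for RS2. The defect must be shown to contain only zero-mean oscillatory parts at order $h^{2}/\eps$, and the $\bx$-dependent $\widehat{\bB}_{1}$ and the relaxation term must not spoil it. I would first handle $\bB_{1}=0$ explicitly, then treat $\eps\bB_{1}$ perturbatively.
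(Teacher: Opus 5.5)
Your filtered system is correct. The device you single out (exactly $N$ steps per gyration period, so that period sums of the exactly periodic part of the defect are spectrally small) is also the one the paper uses for the position defect, $\chi^m=\mathcal{O}(\eps N^{-m_0})$. The gap is in how the error propagates.

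\textbf{The Gronwall step does not hold in your norm.} With $e_n=\abs{\bx^n-\bx(t_n)}+\eps\abs{\mathbf{w}^n-\mathbf{w}(t_n)}$, the one-step amplification is not $1+Ch$. The position error receives the increment $h\,Q_n(\mathbf{w}^n-\mathbf{w}(t_n))$ with $Q_n$ orthogonal, and this equals $(h/\eps)\cdot\eps\abs{\mathbf{w}^n-\mathbf{w}(t_n)}$ in size. So the factor is at least $1+T_0/N$, and $T/h$ steps give growth like $\fe^{cT/\eps}$. In the unweighted norm the map is $(1+Ch)$-stable; this is the paper's rough estimate, Proposition~\ref{pro-2}. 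However, the velocity error it yields is only $\mathcal{O}(h/\eps)$ for RS1 and $\mathcal{O}(h^2/\eps^2)$ for RS2. Fed into the position through $h\sum_n Q_n e^n_{\mathbf{w}}$, it produces position errors of that same non-uniform size.

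The missing idea is to average this propagated term over each period. Its perpendicular part cancels, because $\int_0^{T_0}\fe^{s\hbB_0}ds\,e=T_0(\tilde{\bB}_0\cdot e)\tilde{\bB}_0$. For $d=3$, however, the component of the velocity error parallel to $\bB$ survives, and you must show it is a factor $\eps$ smaller than the full velocity error. The paper does this in three steps:
\begin{itemize}
\item it proves $\abs{\xi^{n,m}_{\dot\bz}\cdot\tilde{\bB}^{n,m}}\lesssim\eps^3\mh^3$, compared with $\abs{\xi^{n}_{\dot\bz}}\lesssim\eps^2\mh^3$;
\item it uses $\tilde{\bB}^{n+1,m}=\tilde{\bB}^{n,m}+\mathcal{O}(\eps^2\mh)$, together with the rough bound, to decouple the parallel component from the perpendicular error;
\item it closes a Gronwall argument over periods on $\abs{e^{0,m}_{\bz}}+\eps^{-1}\abs{e^{0,m}_{\dot\bz,\|}}$.
\end{itemize}
Your summation-by-parts correctors act only on the local defects, not on this propagated velocity error, so nothing in your plan supplies this step.

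\textbf{The RS2 order count does not close.} Take a non-zero-mean residual of size $\mathcal{O}(h^3/\eps^2)$ per step and sum it over the $N=T_0\eps/h$ steps of one period. This gives $T_0h^2/\eps$ per period, not $\eps\cdot h^2/\eps$, and hence $h^2/\eps^2$ after $T/(\eps T_0)$ periods. Note that $h^3/\eps^2$ is the full size of the local position defect ($\eps\mh^3$ in rescaled time). What you need is that, after subtracting the exactly periodic part, the remainder is $\mathcal{O}(h^3/\eps)$ per step. The paper gets this, $\abs{\xi^{n,m}_{\bz,1,2}}\lesssim\eps^2\mh^3$, by freezing the slow dynamics over one period using $\abs{\bB-\bB_0}\lesssim\eps$ and $\abs{\dot\bz(mT_0+s)-\fe^{s\hbB_0}\dot\bz(mT_0)}\lesssim s\eps^2$.

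A minor point: the conjugation you wrote for $\psi^{[1]}_h$ is not the relevant identity. What you need is $\fe^{-\frac{h}{\eps}\hbB_0}\fe^{\frac{h}{\eps}\hbB(\bx)}=I+\mathcal{O}(h)$.
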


In the remainder of this section, we prove Theorem \ref{theo-2}. We focus on the more complex RS2-PIC method, presenting its proof in full detail. The proof for  RS1-PIC is then a direct consequence.

\subsection{Transformation of RS2-PIC scheme}

To obtain optimal error bounds (with explicit $\eps$-dependence) over an $\eps$-independent time interval $[0,T]$, we employ a time rescaling of \eqref{equ-9-28-5}:
\begin{equation}\label{equ-10-4-1}
\tau:=t/\eps, \quad \bz(\tau):=\bx(t), \quad \bw(\tau):=\bv(t), \quad 0\leq \tau\leq T/\eps, 
\end{equation}
such that the equation \eqref{equ-9-28-5} is reformulated as a second-order long-time problem
\begin{equation}\label{long-sys}
\ddot{\bz}(\tau)=\dot{\bz}(\tau)\times \bB(\bz(\tau))+\eps^{2}\bE(\bz(\tau)), \ \bz(0)=\bx_{0}, \ \dot{\bz}(0)=\eps\dot{\bx}_{0}, \ 0<\tau\leq T/\eps .
\end{equation}
The long-time formulation elucidates the scale separation between $h$ and $\eps$ and the resulting averaging effect, which will be leveraged in the subsequent analysis. Given that $\bB(\cdot), \bE(\cdot)\in C^{1}(\mathbb{R}^{d})$, the system \eqref{long-sys} clearly satisfies
\begin{equation}\label{equ-10-4-2}
\norm{\bz}_{L^{\infty}(0,T/\eps)}\lesssim 1, \quad \norm{\dot{\bz}}_{L^{\infty}(0,T/\eps)}\lesssim \eps.
\end{equation}
Moreover, since $\widehat{\bB}$ is skew-symmetry, the propagator $\fe^{\tau \widehat{\bB}_{0}}$ generates a periodic flow with a single period denoted by $T_{0}>0$. Introducing the step size $\mathfrak{h}=\triangle\tau >0$ and the temporal grids $\tau_{n}=n\mathfrak{h}$ for the scaled time $\tau$, and letting $\bz^{n}\approx \bz(\tau_{n})$, $\dot{\bz}^{n}\approx \dot{\bz}(\tau_{n})$ denote the numerical solutions, the RS2-PIC method applied to \eqref{long-sys} under the long-time scaling reads: $\bz^{0}=\bx^{0}, \dot{\bz}^{0}=\eps \dot{\bx}^{0}$,
\begin{equation}\label{st-rsv-resc}
\begin{aligned}
&Z^{1}=\bz^{n}+\dfrac{\mh}{2}\fe^{\frac{\mh}{2}\widehat{\bB}(\bz^{n})}\dot{\bz}^{n}, \
\bz^{n+1}=\bz^{n}+\mh \fe^{\frac{\mh}{2}\widehat{\bB}(\bz^{n})}\dot{\bz}^{n}+\dfrac{\mh^{2}\eps^{2}}{2}\bE_{[Z^{1}]}(Z^{1}), \\
&\dot{\bz}^{n+1}=\fe^{\frac{\mh}{2}\widehat{\bB}(\bz^{n+1})}\fe^{\frac{\mh}{2}\widehat{\bB}(\bz^{n})}\dot{\bz}^{n}
+\fe^{\frac{\mh}{2}\widehat{\bB}(\bz^{n+1})}\mh\eps^{2}(1+\tilde{\gamma}^{n})\bE_{[Z^{1}]}(Z^{1}),
\end{aligned}
\end{equation}
where
$$
\tilde{\gamma}_{n}=\(-\tilde{\mathcal{C}}+\sgn(\tilde{\mathcal{C}})\sqrt{\tilde{\mathcal{C}}^{2}-2\tilde{\mathcal{A}}(\bar{H}(\bz^{n+1},\dot{\bz}^{n+1})-\bar{H}(\bz^{n},\dot{\bz}^{n}))}\)
\Big/\mh\eps^{2}\tilde{\mathcal{A}}
$$
with $\tilde{\mathcal{A}}=\sum\limits_{k=1}^{N_{p}}\omega_{k}(\bE_{[Z^1]}(Z^{1}_{k}))^{\intercal}\bE_{[Z^1]}(Z^{1}_{k})$ and $\tilde{\mathcal{C}}=\sum\limits_{k=1}^{N_{p}}\omega_{k}(\bE_{[Z^1]}(Z^{1}_{k}))^{\intercal}\dot{\tilde{\bz}}^{n+1}_{k}.$
Here $$\bar{H}(\bz,\dot{\bz})=\frac{1}{2}\sum\limits_{k=1}^{N_{p}}\omega_{k}\abs{\dot{\bz}_{k}}^{2}+\frac{\eps^{2}}{2}
\int_{\Omega_{z}}\abs{\bE_{[\bz^{p}(t)]}(\bz)}^{2}d\bz$$
and $$\dot{\tilde{\bz}}^{n+1}=\fe^{\frac{\mh}{2}\widehat{\bB}(\bz^{n})}\dot{\bz}^{n}+\mh\eps^{2}\bE_{[Z^{1}]}(Z^{1}).$$

When $h=\eps\mh$ is chosen, it follows directly from \eqref{equ-10-4-1} that for all $n\geq 0$,
\begin{equation}\label{equ-10-4-3}
\bx(t_{n})=\bz(\tau_{n}), \ \eps\dot{\bx}(t_{n})=\dot{\bz}(\tau_{n}), \ \bx^{n}=\bz^{n}, \ \eps\dot{\bx}^{n}=\dot{\bz}^{n}, \ \gamma_{n}=\tilde{\gamma}_{n}.
\end{equation}
Recalling that the relaxation parameter $\gamma_{n}$ satisfies $\gamma_{n}=\mathcal{O}(h^{2})$, it can be deduced that $\tilde{\gamma}_{n}=\mathcal{O}(\mh^{2}\eps^{2})$. The main convergence result of the RS2-PIC scheme is established as follows.

\subsection{A rough error estimate}

A preliminary coarse bound for the numerical solution is first established. Specifically, for a given time $\tau=\tau_{n}+s$ with $n\geq 0$, we introduce a truncated system of \eqref{long-sys} as
\begin{equation}\label{trun-sys}
\ddot{\tilde{\bz}}^{n}(s)=\dot{\tilde{\bz}}^{n}(s)\times\bB(\bz(\tau_{n}+ \mh/2))+\eps^{2}\bE_{[\tilde{\bz}^{n}(s)]}(\tilde{\bz}^{n}(s)), \ 0<s\leq \mh, \ \
\tilde{\bz}^{n}(0)=\bz(\tau_{n}), \ \dot{\tilde{\bz}}^{n}(0)=\dot{\bz}(\tau_{n}).
\end{equation}
It follows directly that for all $0\leq n<T/(\eps\mh)$, there exists a uniform constant $C>0$ depending on $\norm{\bz}_{L^{\infty}(0,T/\eps)}$, $\norm{\dot{\bz}}_{L^{\infty}(0,T/\eps)}$ and norms of $\bB$ and $\bE$ such that
$
\norm{\tilde{\bz}^{n}}_{L^{\infty}(0,\mh)}\leq C, \ \norm{\dot{\tilde{\bz}}^{n}}_{L^{\infty}(0,\mh)}\leq C\eps.
$

For the numerical scheme \eqref{st-rsv-resc}, the local truncation errors $\xi_{\mathbf{z}}^{n}$ and $\xi_{\dot{\mathbf{z}}}^{n}$ are defined for $0 \leq n<T/(\eps\mh)$ as:
\begin{equation}\label{equ-10-5-3}
\begin{aligned}
\tilde{\bz}^{n}(\mh)=&\bz(\tau_n)+\mh\fe^{\frac{\mh}{2}\hbB(\bz(\tau_n))}\dot{\bz}(\tau_n)+\frac{\eps^{2}\mh^2}{2}
\bE 
(\bz(\tau_n)+\frac{\mh}{2}\fe^{\frac{\mh}{2}\hbB(\bz(\tau_n))}\dot{\bz}(\tau_n))+\xi_{\bz}^{n}, \\
\dot{\tilde{\bz}}^{n}(\mh)=&\fe^{\frac{\mh}{2}\hbB(\tilde{\bz}(\mh))}\fe^{\frac{\mh}{2}\hbB(\bz(\tau_n))}\dot{\bz}(\tau_n) +\fe^{\frac{\mh}{2}\hbB(\tilde{\bz}(\mh))}\mh\eps^{2}(1+\tilde{\gamma}^n)
\bE 
(\bz(\tau_n)+\frac{\mh}{2}\fe^{\frac{\mh}{2}\hbB(\bz(\tau_n))}\dot{\bz}(\tau_n))+\xi_{\dot{\bz}}^{n}. 
\end{aligned}
\end{equation}
Here we introduce the compact notation $\bE(\bx)$ to denote the function of $\bE_{[\bx]}(\bx)$. The local error of the truncated system is characterized by the following Lemma.

\begin{lemma}(Local error)\label{loca-err}
Under the condition that $\bB(\cdot), \bE(\cdot)\in C^{1}(\mathbb{R}^d)$, let $\bz^{n}, \dot{\bz}^{n}$ denote the numerical solution obtained from the RS2-PIC \eqref{st-rsv-resc} for solving \eqref{long-sys} up to $T/\eps$ for a fixed $T>0$. Then there exists a constant $\mh_0>0$ such that $0<\mh\leq \mh_{0}$, the local error $\xi_{\bz}^{n}$ and $\xi_{\dot{\bz}}^{n}$ of the scheme \eqref{st-rsv-resc} for truncated system \eqref{trun-sys} satisfies
\begin{equation}\label{equ-10-9-1}
\abs{\xi_{\bz}^{n}}\lesssim \eps\mh^{3},\quad \abs{\xi_{\dot{\bz}}^{n}}\lesssim \eps^{2}\mh^{3}, \quad 0\leq n<T/(\eps\mh).
\end{equation}
\end{lemma}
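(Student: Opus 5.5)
Since $\hbB$ is frozen at the midpoint $\bB_c:=\hbB(\bz(\tau_n+\mh/2))$ in \eqref{trun-sys}, the truncated problem is a linear rotation plus a small nonlinear force. We therefore write its solution by variation of constants,
\begin{equation*}
\begin{aligned}
\dot{\tilde{\bz}}^{n}(s)&=\fe^{s\bB_c}\dot{\bz}(\tau_n)+\eps^{2}\int_{0}^{s}\fe^{(s-\sigma)\bB_c}\bE(\tilde{\bz}^{n}(\sigma))\,d\sigma,\\
\tilde{\bz}^{n}(s)&=\bz(\tau_n)+\int_{0}^{s}\dot{\tilde{\bz}}^{n}(\sigma)\,d\sigma,
\end{aligned}
\end{equation*}
evaluate at $s=\mh$, and subtract the scheme expressions in \eqref{equ-10-5-3}. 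The bounds $\abs{\tilde{\bz}^n}\le C$ and $\abs{\dot{\tilde{\bz}}^n}\le C\eps$ stated before the lemma, together with \eqref{equ-10-4-2}, let us control every remainder. In particular, $\abs{\tilde{\bz}^n(\sigma)-\bz(\tau_n)}\lesssim \eps\sigma$.

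\textbf{Position error.} We have
\[
\xi_{\bz}^n=\int_0^{\mh}\bigl(\fe^{\sigma\bB_c}-\fe^{\frac{\mh}{2}\hbB(\bz(\tau_n))}\bigr)\dot{\bz}(\tau_n)\,d\sigma+\eps^2\Bigl(\iint\fe^{(\sigma-\rho)\bB_c}\bE(\tilde{\bz}^n(\rho))-\tfrac{\mh^2}{2}\bE(Z^1)\Bigr).
\]

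For the first term, write $\bB_c=\hbB(\bz(\tau_n))+\mathcal{O}(\eps\mh)$, using $\abs{\bz(\tau_n+\mh/2)-\bz(\tau_n)}\lesssim\eps\mh$ and $\bB\in C^1$. The difference of exponentials then contributes $\mathcal{O}(\eps\mh)\cdot\mh\cdot\abs{\dot{\bz}}\lesssim\eps^2\mh^2$, which is within $\eps\mh^3$ once $\eps\lesssim\mh$. Otherwise we keep it through a midpoint expansion; see the obstacle below. What remains is
\[
\int_0^{\mh}\bigl(\fe^{\sigma\hbB}-\fe^{\frac{\mh}{2}\hbB}\bigr)\dot{\bz}(\tau_n)\,d\sigma .
\]
This is exactly the midpoint-rule error for the smooth function $\sigma\mapsto\fe^{\sigma\hbB}\dot{\bz}$, whose second derivative is bounded by $\abs{\hbB}^2\abs{\dot{\bz}}\lesssim\eps$. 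The midpoint rule therefore gives $\mathcal{O}(\eps\mh^3)$.

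The second term is $\eps^2$ times an $\mathcal{O}(\mh^2)$ quantity. We expand $\fe^{(\sigma-\rho)\bB_c}=I+\mathcal{O}(\mh)$ and $\bE(\tilde{\bz}^n(\rho))-\bE(Z^1)=\mathcal{O}(\eps\mh)$, using $\bE\in C^1$ and $\abs{Z^1-\bz(\tau_n)}\lesssim\eps\mh$. This yields $\eps^2\mh^3\le\eps\mh^3$.

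\textbf{Velocity error.} The same decomposition applies, now with the outer factor $\fe^{\frac{\mh}{2}\hbB(\tilde{\bz}(\mh))}\fe^{\frac{\mh}{2}\hbB(\bz(\tau_n))}$ compared with $\fe^{\mh\bB_c}$. This is a symmetric (Strang-type) splitting of the frozen rotation. Expanding $\hbB(\tilde{\bz}(\mh))$ and $\hbB(\bz(\tau_n))$ about the midpoint value $\hbB(\bz(\tau_n+\mh/2))$ gives
\[
\hbB(\tilde{\bz}(\mh))=\bB_c+\delta,\qquad \hbB(\bz(\tau_n))=\bB_c-\delta+\mathcal{O}(\eps^2\mh^2),\qquad \delta=\mathcal{O}(\eps\mh).
\]
The product then equals $\fe^{\mh\bB_c}$ up to a commutator term $\mathcal{O}(\mh^2\abs{\delta})$. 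Multiplied by $\abs{\dot{\bz}}\lesssim\eps$, this gives $\eps^2\mh^3$.

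The force term gives $\eps^2\int_0^{\mh}\fe^{(\mh-\sigma)\bB_c}\bE-\eps^2\mh\,\fe^{\frac{\mh}{2}\hbB}\bE(Z^1)$. This is again a midpoint-rule error, of size $\mathcal{O}(\eps^2\mh^3)$, plus $\mathcal{O}(\eps^3\mh^2)$ corrections. Finally, the relaxation term contributes $\eps^2\mh\abs{\tilde{\gamma}^n}\abs{\bE}\lesssim\eps^2\mh\cdot\mh^2\eps^2$, using $\tilde{\gamma}^n=\mathcal{O}(\mh^2\eps^2)$ from Theorem \ref{the-1} and \eqref{equ-10-4-3}.

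\textbf{Main obstacle.} The delicate point is the first-order $\mathcal{O}(\eps\mh)$ mismatch of the magnetic matrices. Naively this gives terms of size $\eps^2\mh^2$ (position) and $\eps^2\mh^2$ (velocity), which are not $\lesssim\eps\mh^3$ or $\eps^2\mh^3$ uniformly in $\eps$. We plan to show that these leading terms cancel because of the symmetric placement of the half-steps around the midpoint $\tau_n+\mh/2$. Concretely, we write
\[
\bz(\tau_n+\mh/2)-\bz(\tau_n)=\tfrac{\mh}{2}\dot{\bz}(\tau_n)+\mathcal{O}(\eps\mh^2)
\]
and match it against the scheme's arguments $\bz(\tau_n)$ and $\tilde{\bz}(\mh)$. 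The linear parts cancel in the symmetric product, so only second-order Taylor remainders survive. If the $C^1$ hypothesis is insufficient for this expansion, we would use the structure $\bB=\bB_0+\eps\bB_1$ from \eqref{equ-9-28-2}. Then $\hbB(\bz)-\hbB(\bz')=\eps\bigl(\hbB_1(\bz)-\hbB_1(\bz')\bigr)=\mathcal{O}(\eps\cdot\eps\mh)$, which carries an extra factor $\eps$. With this factor the mismatch terms become $\eps^3\mh^2$ (position) and $\eps^3\mh^2$ (velocity). In the regime $\mh\ge$ const implicit in $h=T_0\eps/N$, these are absorbed into the stated bounds $\eps\mh^3$ and $\eps^2\mh^3$.
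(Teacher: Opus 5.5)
Your skeleton matches the paper's proof: variation of constants for the frozen-field system \eqref{trun-sys}, a split into a rotation part and a force part, the midpoint rule, and a symmetric (Strang) cancellation of $\fe^{\frac{\mh}{2}\hbB(\tbz^n(\mh))}\fe^{\frac{\mh}{2}\hbB(\bz(\tau_n))}$ against $\fe^{\mh\bB_c}$. Your BCH/commutator argument is a compact version of the paper's second-order Taylor expansion about $\hbB(\bz(\tau_n+\frac{\mh}{2}))$, and you also use $\tilde\gamma_n=\mathcal{O}(\eps^2\mh^2)$ for the relaxation term. However, the argument as written does not close uniformly in $\eps$, and the rescue in your last paragraph is invalid. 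From $h=T_0\eps/N$ you get $\mh=T_0/N\le T_0/N_0$, so $\mh$ is bounded \emph{above} and tends to $0$ as $N\to\infty$; that is exactly the convergence regime. The lemma is also asserted for every $\mh\in(0,\mh_0]$ uniformly in $\eps$. A leftover of size $\eps^3\mh^2$ is therefore not $\lesssim\eps^2\mh^3$ once $\mh\ll\eps$, nor $\lesssim\eps\mh^3$ once $\mh\ll\eps^2$. Likewise, the structure $\bB=\bB_0+\eps\bB_1$ cannot replace the second-order symmetric expansion: it only supplies one extra factor $\eps$.

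Two estimates need repair.

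(i) Your position ``obstacle'' is spurious, because you dropped a factor $\sigma$. For skew matrices $\abs{\fe^{\sigma A}-\fe^{\sigma B}}\le\sigma\abs{A-B}$, so $\abs{\int_0^{\mh}(\fe^{\sigma\bB_c}-\fe^{\sigma\hbB(\bz(\tau_n))})\,d\sigma\,\dot{\bz}(\tau_n)}\le\frac{\mh^2}{2}\cdot C\eps\mh\cdot C\eps\lesssim\eps^2\mh^3$ already under $C^1$. The midpoint rule then gives $\abs{\xi_{\bz}^n}\lesssim\eps\mh^3$ directly. Your symmetric-placement remedy could not have worked here anyway: the position update contains only $\hbB(\bz(\tau_n))$, so there is no symmetric product to exploit.

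(ii) The ``$\mathcal{O}(\eps^3\mh^2)$ corrections'' in your velocity force term are a genuine deficit. Bounding $\bE(\tbz^n(s))-\bE(Z^1)$ by $\mathcal{O}(\eps\mh)$ against the single-integral prefactor $\eps^2\mh$ is one order too weak. The missing idea, which is the paper's, is to compare $\tbz^n(s)$ with the path $\bz(\tau_n)+s\fe^{s\hbB(\bz(\tau_n))}\dot{\bz}(\tau_n)$. This path stays within $\mathcal{O}(\eps s^2)$ of $\tbz^n(s)$, and its value at $s=\mh/2$ is exactly $Z^1$. The midpoint rule applied along this path then gives $\int_0^{\mh}\bE(\tbz^n(s))\,ds=\mh\bE(Z^1)+\mathcal{O}(\eps\mh^3)$, so the correction becomes $\mathcal{O}(\eps^3\mh^3)$.

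With (i) and (ii) fixed, the rest of your argument yields \eqref{equ-10-9-1}. Both the symmetric expansion and this midpoint step use second derivatives of $\bB$ and $\bE$, which goes beyond the stated $C^1$ hypothesis; the paper tacitly relies on the same extra smoothness.
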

\begin{proof}We only prove the  second statement and skip the proof of the first one  for brevity.

 Applying the variation-of-constants formula to the truncated system \eqref{trun-sys} gives
\begin{equation}\label{equ-10-7-1}
\begin{aligned}
\tilde{\bz}^{n}(\mh)=&\bz(\tau_n)+\int_{0}^{\mh}\dot{\tilde{\bz}}^{n}(s)ds,   \ \
\dtbz^{n}(\mh)=\fe^{\mh\hbB(\bz(\tau_n+\frac{\mh}{2}))}\dtbz(\tau_n)+\eps^{2}\int_{0}^{\mh}\fe^{(\mh-s)\hbB(\bz(\tau_n+\frac{\mh}{2}))}\bE(\tbz^{n}(s))ds,
\end{aligned}
\end{equation}
which further implies
\begin{equation}\label{equ-10-7-2}
\tbz^{n}(\mh)=\bz(\tau_n)+\int_{0}^{\mh}\fe^{s\hbB(\bz(\tau_n+\frac{\mh}{2}))}ds\dot{\bz}(\tau_n)+\eps^{2}\int_{0}^{\mh}\int_{0}^{s}\fe^{(s-\sigma)\hbB(\bz(\tau_n+\frac{\mh}{2}))}
\bE(\tbz^{n}(\sigma))d\sigma ds.
\end{equation} Subtracting \eqref{equ-10-5-3} from \eqref{equ-10-7-1} yields
$$
\xi_{\dot{\bz}}^{n}=\xi_{\dot{\bz},1}^{n}+\xi_{\dot{\bz},2}^{n}, \ 0\leq n<T/(\eps\mh),
$$
where
\begin{equation}\label{equ-10-7-3}
\begin{aligned}
\xi_{\dot{\bz},1}^{n}=&\fe^{\mh\hbB(\bz(\tau_n+\frac{\mh}{2}))}\dot{\bz}(\tau_n)-\fe^{\frac{\mh}{2}\hbB(\tbz^{n}(\mh))}\fe^{\frac{\mh}{2}\hbB(\bz(\tau_n))}\dot{\bz}(\tau_n),   \\
\xi_{\dot{\bz},2}^{n}=&\eps^{2}\int_{0}^{\mh}\fe^{(\mh-s)\hbB(\bz(\tau_n+\frac{\mh}{2}))}\bE(\tbz^{n}(s))ds  -\fe^{\frac{\mh}{2}\hbB(\tbz^{n}(\mh))}h\eps^{2}
(1+\tilde{\gamma}_{n})\bE\left(\bz(\tau_n)+\frac{\mh}{2}\fe^{\frac{\mh}{2}\hbB(\bz(\tau_n))}\dot{\bz}(\tau_n)\right).
\end{aligned}
\end{equation}

\textbf{(I): Estimate of $\xi_{\dot{\bz},1}^{n}$.}
The term $\xi_{\dot{\bz},1}^{n}$ is derived as
$
\xi_{\dot{\bz},1}^{n}=\xi_{\dot{\bz},1,1}^{n}+\xi_{\dot{\bz},1,2}^{n},
$
where 
\begin{equation*}
\begin{aligned}
\xi_{\dot{\bz},1,1}^{n}=&\fe^{\mh\hbB(\bz(\tau_{n}+\frac{\mh}{2}))}\dot{\bz}(\tau_{n})-\fe^{\frac{\mh}{2}\hbB(\bz(\tau_{n+1}))}\fe^{\frac{\mh}{2}\hbB(\bz(\tau_{n}))}\dot{\bz}(\tau_{n}), \\
\xi_{\dot{\bz},1,2}^{n}=&\fe^{\frac{\mh}{2}\hbB(\bz(\tau_{n+1}))}\fe^{\frac{\mh}{2}\hbB(\bz(\tau_{n}))}\dot{\bz}(\tau_{n})
-\fe^{\frac{\mh}{2}\hbB(\tbz^{n}(\mh))}\fe^{\frac{\mh}{2}\hbB(\bz(\tau_{n}))}\dot{\bz}(\tau_{n}).
\end{aligned}
\end{equation*}
Using the Taylor expansion with integral remainder, i.e., $
\fe^{\mh\hbB}=I+\mh\hbB+\frac{\mh^2}{2}\hbB^{2}+\frac{1}{2}\int_{0}^{\mh}\fe^{t\hbB}\hbB^{3}(\mh-t)^{2}dt,$
one obtains $
\abs{\xi_{\dot{\bz},1,1}^{n}}\lesssim \abs{\mh\left[\hbB(\bz(\tau_{n}+\frac{\mh}{2}))-\frac{1}{2}\left(\hbB(\bz(\tau_{n+1}))-\hbB(\bz(\tau_{n}))\right) \right]\dot{\bz}(\tau_{n})+R_{2}^{n}\dot{\bz}(\tau_{n})+R_{3}^{n}\dot{\bz}(\tau_{n})},
$
where $R_{2}^{n}$ is given by $
R_{2}^{n}=\frac{\mh^{2}}{2}\left[\hbB^{2}(\bz(\tau_{n}+\frac{\mh}{2}))-\frac{1}{4}\(\hbB^{2}(\bz(\tau_{n+1}))+\hbB^{2}(\bz(\tau_{n}))\)-\frac{1}{2}\hbB(\bz(\tau_{n+1}))\hbB(\bz(\tau_{n}))   \right], $
and $R_{3}^{n}$ denotes the corresponding higher-order remainder term. Given that $\bB(\bx)=\bB_{0}+\eps\bB_{1}(\bx)$, it follows that
\begin{equation}\label{equ-10-7-8}
\nabla\bB(\bx)=\eps\nabla \bB_{1}(\bx).
\end{equation}
Performing Taylor expansions of $\hbB(\bz(\tau_{n+1}))$ and $\hbB(\bz(\tau_{n}))$ about $\hbB(\bz(\tau_{n}+\frac{\mh}{2}))$ and using \eqref{equ-10-7-8}, we have
$\abs{\hbB(\bz(\tau_{n}+\frac{\mh}{2}))-\frac{1}{2}\(\hbB(\bz(\tau_{n+1}))+\hbB(\bz(\tau_{n}))\)}\lesssim \eps^{2}\mh^{2}$. Introducing the notation $\bB^{n}=\bB(\bz(\tau_{n}+\frac{\mh}{2}))-\frac{1}{2}\left(\bB(\bz(\tau_{n+1}))+\bB(\bz(\tau_{n}))\right)$, the term $R_{2}^{n}\dot{\bz}(\tau_{n})$ can be expressed as $$\abs{R_{2}^{n}\dot{\bz}(\tau_{n})}=\frac{\mh^2}{2}\abs{\dot{\bz}(\tau_{n})\times \bB^{n} \times \bB(\bz(\tau_{n}+\frac{\mh}{2}))+R_{2,1}^{n}+R_{2,2}^{n}},$$where
\begin{equation*}
\begin{aligned}
R_{2,1}^{n}=&\frac{1}{2}\dot{\bz}(\tau_{n})\times \bB(\bz(\tau_{n+1}))\times \left(\bB(\bz(\tau_{n}+\frac{\mh}{2}))-\bB(\bz(\tau_{n+1}))\right) \\
&+\frac{1}{2}\dot{\bz}(\tau_{n})\times \bB(\bz(\tau_{n}))\times \left(\bB(\bz(\tau_{n}+\frac{\mh}{2}))-\bB(\bz(\tau_{n}))\right),\\
R_{2,2}^{n}=&\frac{1}{4}\dot{\bz}(\tau_{n})\times \left(\bB(\bz(\tau_{n+1}))-\bB(\bz(\tau_{n}))\right)+\frac{1}{4}\dot{\bz}(\tau_{n})\times \bB(\bz(\tau_{n}))\times \left(\bB(\bz(\tau_{n}))-\bB(\bz(\tau_{n+1}))\right).
\end{aligned}
\end{equation*}
It is obvious that $\abs{R_{2,1}^{n}}\lesssim \eps^{3}\mh$, $\abs{R_{2,2}^{n}}\lesssim \eps^{3}\mh$, which yields
$\abs{R_{2}^{n}\dot{\bz}(\tau_{n})}\lesssim \eps^{3}\mh^{3}$. Combining these estimates, we obtain
$
\abs{\xi_{\dot{\bz},1,1}^{n}}\lesssim \eps^{3}\mh^{3}.
$
Using the inequality $
\abs{\fe^{\frac{\mh}{2}\hbB(\bz(\tau_{n+1}))}-\fe^{\frac{\mh}{2}\hbB(\tbz^{n}(\mh))}}\lesssim \mh\eps\abs{\bz(\tau_{n+1})-\tbz^{n}(\mh)}\lesssim \mh\eps \abs{\zeta_{\bz}^{n}(\mh)}\lesssim \eps^{4}\mh^{5},$
it follows that $\abs{\xi_{\dot{\bz},1,2}^{n}}\lesssim \eps^{5}\mh^{5}$. Overall, this leads to
\begin{equation}\label{equ-10-8-1}
\abs{\xi_{\dot{\bz},1}^{n}}\lesssim \abs{\xi_{\dot{\bz},1,1}^{n}}+\abs{\xi_{\dot{\bz},1,2}^{n}} \lesssim \eps^{3}\mh^{3}.
\end{equation}

\textbf{(II): Estimate of $\xi_{\dot{\bz},2}^{n}$.} Applying a Taylor expansion to \eqref{equ-10-7-3} leads to
\begin{equation*}
\begin{aligned}
&\eps^{2}\int_{0}^{\mh}\fe^{(\mh-s)\hbB(\bz(\tau_n+\frac{\mh}{2}))}\bE(\tbz^{n}(s))ds
=\eps^{2}\fe^{\frac{\mh}{2}\hbB(\bz(\tau_n+\frac{\mh}{2}))}\int_{0}^{\mh}\fe^{(\frac{\mh}{2}-s)\hbB(\bz(\tau_n+\frac{\mh}{2}))}\bE(\tbz^{n}(s))ds \\
=&\eps^{2}\fe^{\frac{\mh}{2}\hbB(\bz(\tau_n+\frac{\mh}{2}))}\int_{0}^{\mh}\left[I+(\frac{\mh}{2}-s)\hbB(\bz(\tau_n+\frac{\mh}{2}))+\frac{1}{2}(s-\frac{\mh}{2})^{2} 
\fe^{(\frac{\mh}{2}-\tau_{s}^{n})\hbB(\bz(\tau_n+\frac{\mh}{2}))}\hbB^{2}(\bz(\tau_n+\frac{\mh}{2})) \right] \bE(\tbz^{n}(s))ds \\
=&\eps^{2}\fe^{\frac{\mh}{2}\hbB(\bz(\tau_n+\frac{\mh}{2}))}\int_{0}^{\mh}\bE(\tbz^{n}(s))ds+\delta_{1}+\delta_{2},
\end{aligned}
\end{equation*}
where the $ \abs{\delta_{1}}=\abs{\eps^{2}\fe^{\frac{\mh}{2}\hbB(\bz(\tau_n+\frac{\mh}{2}))}\int_{0}^{\mh}(\frac{\mh}{2}-s)\hbB(\bz(\tau_n+\frac{\mh}{2}))\bE(\tbz^{n}(s))ds}
\lesssim \eps^{2}\mh^{3}$ 
is estimated utilizing the midpoint integral formula and
\begin{align*}
\abs{\delta_{2}}=\abs{\eps^{2}\fe^{\frac{\mh}{2}\hbB(\bz(\tau_n+\frac{\mh}{2}))}\int_{0}^{\mh}\frac{1}{2}(s-\frac{\mh}{2})^{2} 
\fe^{(\frac{\mh}{2}-\tau_{s}^{n})\hbB(\bz(\tau_n+\frac{\mh}{2}))}\hbB^{2}(\bz(\tau_n+\frac{\mh}{2}))\bE(\tbz^{n}(s))ds}\lesssim \eps^{2}\mh^{3}.
\end{align*} 
From the midpoint integral formula again, one obtains
\begin{align*}
&\int_{0}^{\mh}\bE\left(\bz(\tau_n)+s\fe^{s\hbB(\bz(\tau_n))}\dot{\bz}(\tau_n))\right)ds 
= \mh\bE\left(\bz(\tau_n)+\frac{\mh}{2}\fe^{\frac{\mh}{2}\hbB(\bz(\tau_n))}\dot{\bz}(\tau_{n}))\right)
+\mathcal{O}(\eps \mh^{3}).
\end{align*} 
It follows that
\begin{equation}\label{equ-10-7-4}
\begin{aligned}
\xi_{\dot{\bz},2}^{n}
=&\eps^{2}\left(\fe^{\frac{\mh}{2}\hbB(\bz(\tau_n+\frac{\mh}{2}))}-\fe^{\frac{\mh}{2}\hbB(\tbz^{n}(\mh))}\right)\int_{0}^{\mh}\bE(\tbz^{n}(s))ds+\mathcal{O}(\eps^{2}\mh^{3}) \\
&+\eps^{2}\fe^{\frac{\mh}{2}\hbB(\tbz^{n}(\mh))}\int_{0}^{\mh}\left[\bE(\tbz^{n}(s)) -\bE\left(\bz(\tau_n)+s\fe^{s\hbB(\bz(\tau_n))}\dot{\bz}(\tau_{n}))\right)\right]ds  \\
&+\mathcal{O}(\eps^{3}\mh^{3})-\mh \eps^{2}\tilde{\gamma}_{n}\fe^{\frac{\mh}{2}\hbB(\tbz^{n}(\mh))}\bE
\left(\bz(\tau_n)+\frac{\mh}{2}\fe^{\frac{\mh}{2}\hbB(\bz(\tau_n))}\dot{\bz}(\tau_n))\right).
\end{aligned}
\end{equation}
Building upon \eqref{equ-10-7-8}, the estimate
\begin{equation}\label{equ-10-7-5}
\begin{aligned}
\abs{\fe^{\frac{\mh}{2}\hbB(\bz(\tau_n+\frac{\mh}{2}))}-\fe^{\frac{\mh}{2}\hbB(\tbz^{n}(\mh))}}
\lesssim \eps^{2}\mh^{2}+\eps\mh \abs{\zeta_{\bz}^{n}(\mh)}\lesssim \eps^{2}\mh^{2}
\end{aligned}
\end{equation}
is obtained. Expanding with Taylor's formula gives
\begin{equation*}
\bz(\tau_{n})+s\fe^{s\hbB(\bz(\tau_{n}))}\dot{\bz}(\tau_{n})=\bz(\tau_{n})+s\left[I+\int_{0}^{s}\hbB\fe^{\alpha \hbB}d\alpha\right]\dot{\bz}(\tau_{n})=\bz(\tau_{n})+s\dot{\bz}(\tau_{n})+s\int_{0}^{s}\hbB\fe^{\alpha \hbB}d\alpha \dot{\bz}(\tau_{n}).
\end{equation*}
This leads to
\begin{equation*}
\begin{aligned}
\tbz^{n}(s)=&\bz(\tau_{n}+s)-\zeta_{\bz}^{n}(s)=\bz(\tau_{n})+s\dot{\bz}(\tau_{n})+\int_{0}^{s}(s-\sigma)\ddot{\bz}(\tau_{n}+\sigma)d\sigma-\zeta_{\bz}^{n}(s) \\
=&\bz_(\tau_{n})+s\fe^{s\hbB(\bz(\tau_{n}))}\dot{\bz}(\tau_{n})-s\int_{0}^{s}\hbB\fe^{\alpha \hbB}d\alpha \dot{\bz}(\tau_{n})+\int_{0}^{s}(s-\sigma)\ddot{\bz}(\tau_{n}+\sigma)d\sigma-\zeta_{\bz}^{n}(s).
\end{aligned}
\end{equation*}
Therefore, it is obtained that
\begin{equation*}
\begin{aligned}
\int_{0}^{\mh}\bE(\tbz^{n}(s))ds=&\int_{0}^{\mh}\bE\left(\bz(\tau_{n})+s\fe^{s\hbB(\bz(\tau_{n}))}\dot{\bz}(\tau_{n})\right)ds +\int_{0}^{h}\int_{0}^{1}\nabla\bE(s_{\rho})d\rho\\
& \left[-s\int_{0}^{s}\hbB\fe^{\alpha \hbB}d\alpha \dot{\bz}(\tau_{n})+\int_{0}^{s}(s-\sigma)\ddot{\bz}(\tau_{n}+\sigma)d\sigma-\zeta_{\bz}^{n}(s)\right]ds,
\end{aligned}
\end{equation*}
where 
$
s_{\rho}=\bz(\tau_{n})+s\fe^{s\hbB(\bz(\tau_{n}))}\dot{\bz}(\tau_{n})+\rho\left[-s\int_{0}^{s}\hbB\fe^{\alpha \hbB}d\alpha \dot{\bz}(\tau_{n})+\int_{0}^{s}(s-\sigma)\ddot{\bz}(\tau_{n}+\sigma)d\sigma-\zeta_{\bz}^{n}(s)\right].
$
Moreover, 
\begin{equation*}
\begin{aligned}
\abs{\int_{0}^{h}\int_{0}^{1}\nabla\bE(s_{\rho})d\rho \left[-s\int_{0}^{s}\hbB\fe^{\alpha \hbB}d\alpha \dot{\bz}(\tau_{n})+\int_{0}^{s}(s-\sigma)\ddot{\bz}(\tau_{n}+\sigma)d\sigma-\zeta_{\bz}^{n}(s)\right]ds} 
\lesssim  \eps\mh^{3}+\eps^{3}\mh^{5}\lesssim \eps\mh^{3},
\end{aligned}
\end{equation*}
which implies that
\begin{equation}\label{equ-10-7-6}
\abs{\int_{0}^{\mh}\bE(\tbz^{n}(s))ds-\int_{0}^{\mh}\bE\left(\bz(\tau_{n})+s\fe^{s\hbB(\bz(\tau_{n}))}\dot{\bz}(\tau_{n})\right)ds }\lesssim \eps\mh^{3}.
\end{equation}
Inserting \eqref{equ-10-7-5} and \eqref{equ-10-7-6} into \eqref{equ-10-7-4}, and noting that $\tilde{\gamma}_{n}=\mathcal{O}(\eps^{2}\mh^{2})$, one obtains
\begin{equation}\label{equ-10-7-7}
\abs{\xi_{\dot{\bz},2}^{n}}\lesssim \eps^{4}\mh^{3}+\eps^{3}\mh^{3}+\eps^{4}\mh^{3}+\eps^{2}\mh^{3}+\eps^{2}\mh^{3}+\eps^{3}\mh^{3}\lesssim \eps^{2}\mh^{3}.
\end{equation}

Combining \eqref{equ-10-8-1} with \eqref{equ-10-7-7} yields $
\abs{\xi_{\dot{\bz}}^{n}}\lesssim \abs{\xi_{\dot{\bz},1}^{n}}+\abs{\xi_{\dot{\bz},2}^{n}}\lesssim \eps^{3}\mh^{3}+\eps^{2}\mh^{3} \lesssim \eps^{2}\mh^{3}
$, which establishes the second estimate in \eqref{equ-10-9-1}. 
\end{proof}

\begin{proposition}\label{pro-2}
Under the conditions of Lemma \ref{loca-err}, there exists a constant $\mh_0>0$ that is independent of $\eps$, such that if the time step satisfies $0<\mh\leq \mh_{0}$, we have
\begin{equation*}
\abs{\bz^{n}-\bz(\tau_{n})}\lesssim \mh^{2}, \quad \abs{\dot{\bz}^{n}-\dot{\bz}(\tau_{n})}\lesssim \eps\mh^{2}, \quad 0\leq n \leq T/(\eps\mh), 
\end{equation*}
and
\begin{equation}\label{solu-bound}
\abs{\bz^{n}}\leq \norm{\bz}_{L^{\infty}(0,T/\eps)}+1, \quad \abs{\dot{\bz}^{n}}\leq \norm{\dot{\bz}}_{L^{\infty}(0,T/\eps)}+\eps, \quad 0\leq n\leq T/(\eps\mh).
\end{equation}
\end{proposition}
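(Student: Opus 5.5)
We argue by induction on $n$, with a bootstrap. Let $e_{\bz}^{n}=\bz(\tau_n)-\bz^{n}$ and $e_{\dot{\bz}}^{n}=\dot{\bz}(\tau_n)-\dot{\bz}^{n}$. The induction hypothesis at step $n$ is the bound \eqref{solu-bound} for all $k\le n$. This hypothesis holds trivially at $n=0$. It lets every Lipschitz constant of $\bE$, $\bB$ and $\fe^{\frac{\mh}{2}\hbB(\cdot)}$ be evaluated on a fixed $\eps$-independent ball.

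\textbf{Step 1: error recursion.} Subtract the scheme \eqref{st-rsv-resc} from the defining relation \eqref{equ-10-5-3} of the local errors. Since $\bz(\tau_{n+1})=\tbz^{n}(\mh)+\zeta_{\bz}^{n}(\mh)$, the gap between the truncated and exact flows over one step enters as an extra local error. From the variation-of-constants computation in the proof of Lemma \ref{loca-err}, with $\nabla\bB=\eps\nabla\bB_1$, we expect $\abs{\zeta_{\bz}^{n}}\lesssim\eps^{2}\mh^{3}$ and $\abs{\zeta_{\dot{\bz}}^{n}}\lesssim\eps^{2}\mh^{3}$. The position recursion reads
\begin{equation*}
e_{\bz}^{n+1}=e_{\bz}^{n}+\mh\fe^{\frac{\mh}{2}\hbB(\bz(\tau_n))}e_{\dot{\bz}}^{n}+\mh\bigl(\fe^{\frac{\mh}{2}\hbB(\bz(\tau_n))}-\fe^{\frac{\mh}{2}\hbB(\bz^{n})}\bigr)\dot{\bz}^{n}+\tfrac{\eps^{2}\mh^{2}}{2}\bigl(\bE(Z^{1}_{\rm ex})-\bE(Z^{1})\bigr)+\xi_{\bz}^{n}+\zeta_{\bz}^{n}.
\end{equation*}
Using $\abs{\nabla\bB}\lesssim\eps$ and $\abs{\dot{\bz}^{n}}\lesssim\eps$, the matrix-difference term is bounded by $\mh^{2}\eps^{2}\abs{e_{\bz}^{n}}$. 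The $\bE$ term is bounded by $\eps^{2}\mh^{2}(\abs{e_{\bz}^{n}}+\mh\abs{e_{\dot{\bz}}^{n}})$.

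The velocity recursion is handled in the same way. The rotation $\fe^{\frac{\mh}{2}\hbB(\cdot)}$ is orthogonal, so it is an isometry on $e_{\dot{\bz}}^{n}$. The commutator differences are $\lesssim\mh\eps\abs{e_{\bz}}\cdot\eps$. The relaxation contribution $\mh\eps^{2}(\tilde{\gamma}_{n}^{\rm ex}-\tilde{\gamma}_{n})\bE$ is harmless because $\tilde{\gamma}_n=\mathcal{O}(\eps^2\mh^2)$ by Theorem \ref{the-1}. It is bounded outright by $\eps^{4}\mh^{3}$, so no Lipschitz estimate of $\gamma$ is needed.

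\textbf{Step 2: scaled Gronwall.} Set $\mathcal{E}^{n}=\abs{e_{\bz}^{n}}+\eps^{-1}\abs{e_{\dot{\bz}}^{n}}$. Step 1 should give
\begin{equation*}
\mathcal{E}^{n+1}\le(1+C\eps\mh)\mathcal{E}^{n}+C\eps\mh^{3},
\end{equation*}
where the local errors from Lemma \ref{loca-err} contribute $\eps\mh^{3}$ and $\eps^{2}\mh^{3}/\eps$. Summing over $n\le T/(\eps\mh)$ gives $\mathcal{E}^{n}\le \fe^{CT}\,C\eps\mh^{3}\cdot T/(\eps\mh)\lesssim\mh^{2}$. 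This is exactly the claimed $\abs{e_{\bz}^{n}}\lesssim\mh^{2}$ and $\abs{e_{\dot{\bz}}^{n}}\lesssim\eps\mh^{2}$.

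\textbf{Main obstacle.} The main difficulty is ensuring that the amplification factor is $1+C\eps\mh$, not $1+C\mh$; the latter would blow up over $T/(\eps\mh)$ steps. The dangerous term is $\mh e_{\dot{\bz}}^{n}$ in the position equation. Measured in $\mathcal{E}$, it contributes $\mh\eps\cdot\eps^{-1}\abs{e_{\dot{\bz}}^{n}}$, which is $\mh\eps\,\mathcal{E}^{n}$. This works precisely because of the $\eps^{-1}$ weighting together with $\abs{\dot{\bz}}\lesssim\eps$ and $\nabla\bB=\mathcal{O}(\eps)$. I will check each coupling term to confirm it has this extra factor $\eps$.

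\textbf{Step 3: closing the bootstrap.} Choose $\mh_{0}$ so that $C\mh_{0}^{2}\le1$. Then $\abs{\bz^{n+1}}\le\abs{\bz(\tau_{n+1})}+1$ and $\abs{\dot{\bz}^{n+1}}\le\abs{\dot{\bz}(\tau_{n+1})}+\eps$. This is \eqref{solu-bound} at step $n+1$, which completes the induction.
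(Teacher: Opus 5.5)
Your proposal is correct and follows the paper's proof essentially step for step: the truncated-system errors $\zeta^{n}$ together with the local errors of Lemma~\ref{loca-err}, the error recursion whose Lipschitz-type $\eta$-terms are controlled under the induction hypothesis, the $\eps^{-1}$-weighted discrete Gronwall argument over $T/(\eps\mh)$ steps with amplification factor $1+C\eps\mh$, and closing \eqref{solu-bound} by taking $\mh_{0}$ small. Your truncation bounds $\abs{\zeta_{\bz}^{n}},\abs{\zeta_{\dot{\bz}}^{n}}\lesssim\eps^{2}\mh^{3}$ are weaker than the paper's but suffice; note, though, that the velocity bound genuinely needs the cancellation from freezing $\bB$ at $\bz(\tau_n+\mh/2)$ (the paper's midpoint argument with $F_{n}(\mh/2)=0$), because the crude bound $\eps^{3}\mh^{2}$ would only give $\mh^{2}+\eps\mh$.
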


\begin{proof}
The error introduced by truncating the system is first analyzed. Defining
\begin{equation*}
\zeta_{\bz}^{n}(s):=\bz(\tau_{n}+s)-\tilde{\bz}^{n}(s), \ \zeta_{\dot{\bz}}^{n}(s):=\dot{\bz}(\tau_{n}+s)-\dot{\tilde{\bz}}^{n}(s), \ 0\leq n<T/(\eps\mh),
\end{equation*}
and subtracting \eqref{trun-sys} from \eqref{long-sys} yields, for $0\leq n<T/(\eps\mh)$,
\begin{equation}\label{equ-10-4-4}
\ddot{\zeta}_{\bz}^{n}(s)=\zeta_{\dot{\bz}}^{n}(s)\times \bB(\bz(\tau_{n}+ \mh/2 ))+\eps^{2}\bE(\bz(\tau_{n}+s))-\eps^{2}\bE(\tilde{\bz}^{n}(s))+\xi^{n}_{0}(s), \
\zeta_{\bz}^{n}(0)=0,\  \zeta_{\dot{\bz}}^{n}(0)=0,
\end{equation}
where
$
\xi_{0}^{n}(s)=\dot{\bz}(\tau_{n}+s)\times[\bB(\bz(\tau_{n}+s))-\bB(\bz(\tau_{n}+ \mh/2 ))].
$
An application of the Duhamel's formula to \eqref{equ-10-4-4} gives
$
\zeta_{\bz}^{n}(\mh)=\int_{0}^{\mh}\zeta_{\dot{\bz}}^{n}(s)ds$ and 
\begin{equation*}
\begin{aligned}\zeta_{\dot{\bz}}^{n}(s)=&\int_{0}^{\mh}\fe^{(\mh-s)\widehat{\bB}(\bz(\tau_n+ \mh/2 ))}[\eps^{2}\bE(\bz(\tau_n+s))-\eps^{2}
\bE(\tilde{\bz}(s))+\xi_{0}^{n}(s)]ds  \\
=&\int_{0}^{\mh}\fe^{(\mh-s)\widehat{\bB}(\bz(\tau_n+ \mh/2 ))}\left[\eps^{2}\int_{0}^{1}\nabla\bE \left(\bz
(\tau_n+s)+(\rho-1)\zeta_{\bz}^{n}(s)\right)d\rho+\xi_{0}^n(s)\right]ds. \end{aligned}
\end{equation*}
Combining these two results yields
\begin{equation*}
\begin{aligned}
\zeta_{\bz}^{n}(\mh)=&\eps^{2}\int_{0}^{\mh}\int_{0}^{s}\fe^{(s-\sigma)\widehat{\bB}(\bz(\tau_n+ \mh/2 ))}\Big(\int_{0}^{1}\nabla\bE\left(\bz
(\tau_n+\sigma)+(\rho-1)\zeta_{\bz}^{n}(\sigma)\right)d\rho  + \xi_{0}^{n}(\sigma)\Big)d\sigma ds.
\end{aligned}
\end{equation*}
Denoting $\widehat{\bB}_{n}=\widehat{\bB}(\bz(\tau_n+\mh/2))$ for brevity, we have
\begin{equation*}
\begin{aligned}
&\abs{\int_{0}^{\mh}\fe^{(\mh-s)\hbB(\bz(\tau_n+ \mh/2 ))}\xi_{0}^{n}(s)ds}=\abs{\int_0^\mh \fe^{(\mh-s)\hbB_{n}}\dot{\bz}(\tau_n+s)\times\left[\bB(\bz(\tau_n+s)-\bz(\tau_n+ \mh/2 ))\right]ds} \\
\lesssim &\eps^{3}\abs{\int_0^\mh \fe^{(\mh-s)\hbB_{n}}\frac{\dot{\bz}(\tau_n+s)}{\norm{\dot{\bz}}_{L^\infty(0,\mh)}}\times\left(\int_{ \mh/2 }^{s}
\nabla\bB_{1}(\bz(\tau_n+\rho))\frac{\dot{\bz}(\tau_n+\rho)}{\norm{\dot{\bz}}_{L^{\infty}(0,\mh)}}
d\rho \right)ds} =\eps^{3}\abs{\int_0^\mh \fe^{(\mh-s)\hbB_{n}} F_{n}(s)ds},
\end{aligned}
\end{equation*}
where 
$
F_{n}(s)=\frac{\dot{\bz}(\tau_n+s)}{\norm{\dot{\bz}}_{L^\infty(0,\mh)}}\times \int_{ \mh/2 }^{s}
\nabla\bB_{1}(\bz(\tau_n+\rho))\frac{\dot{\bz}(\tau_n+\rho)}{\norm{\dot{\bz}}_{L^{\infty}(0,\mh)}}
d\rho. 
$
Applying the midpoint quadrature rule and using $F(\mh/2)=0$ gives
$$
\abs{\int_{0}^{\mh}\fe^{(\mh-s)\hbB(\bz(\tau_n+ \mh/2 ))}\xi_{0}^{n}(s)ds}\lesssim \eps^{3}\abs{\int_0^\mh \fe^{(\mh-s)\hbB_{n}} F_{n}(s)ds} \lesssim \eps^{3}\mh^{3}.
$$
A standard bootstrap argument then yields the bound $\abs{\zeta_{\bz}^{n}(s)}\lesssim \eps^{3}s^{4}$, $s\in [0,\mh]$ for $h\lesssim 1$. This estimate implies that for all $0\leq n<T/(\eps\mh)$,
\begin{equation}\label{equ-10-5-1}
\abs{\zeta_{\bz}^{n}(\mh)}\lesssim \eps^{3}\mh^{4}, \ \abs{\zeta_{\dot{\bz}}^{n}(\mh)}\lesssim \eps^{3}\mh^{3}.
\end{equation}

We now estimate the error of the scheme
\begin{equation*}
e_{\bz}^{n+1}:=\bz(\tau_{n+1})-\bz^{n+1}, \ e_{\dot{\bz}}^{n+1}:=\dot{\bz}(\tau_{n+1})-\dot{\bz}^{n+1}, \ 0\leq n<T/(\eps\mh).
\end{equation*}
Inserting the truncated solution
\begin{equation}\label{equ-10-5-2}
e_{\bz}^{n+1}=\tilde{e}_{\bz}^{n}+\zeta_{\bz}^{n}(\mh), \ e_{\dot{\bz}}^{n+1}=\tilde{e}_{\dot{\bz}}^{n}+\zeta_{\dot{\bz}}^{n}(\mh),
\end{equation}
so that the analysis reduces to estimating $
\tilde{e}_{\bz}^{n}:=\tilde{\bz}^{n}(\mh)-\bz^{n+1}, \ \tilde{e}_{\dot{\bz}}^{n}:=\dot{\tilde{\bz}}^{n}(\mh)-\dot{\bz}^{n+1}.$

With the preceding preparations, an inductive argument is employed to establish the boundedness of the numerical solution in \eqref{solu-bound}. For $n=0$, \eqref{solu-bound} holds trivially since $\bz^{0}=\bx_{0}$, $\dot{\bz}^{0}=\eps\bv_{0}$. Assuming that \eqref{solu-bound} is true up to some $0\leq m<T/(\eps\mh)$, it remains to verify that the bound also holds for $m+1$.

For $n\geq m$, subtracting \eqref{equ-10-5-3} form the scheme \eqref{st-rsv-resc}, and using \eqref{equ-10-5-2} yields
\begin{subequations}\label{equ-10-9-6}
\begin{align}
e_{\bz}^{n+1}=&e_{\bz}^{n}+\mh\fe^{ \mh/2 \hbB(\bz(\tau_{n}))}e_{\dot{\bz}}^{n}+\eta_{\bz}^{n}+\xi_{\bz}^{n}+\zeta_{\bz}^{n}, \label{equ-10-9-6a} \\
e_{\dot{\bz}}^{n+1}=&\fe^{ \mh/2 \hbB(\tbz^{n}(\mh))}\fe^{ \mh/2 \hbB(\bz(\tau_{n}))}e_{\dot{\bz}}^{n}+\eta_{\dot{\bz}}^{n}+\xi_{\dot{\bz}}^{n}+\zeta_{\dot{\bz}}^{n},
\ 0\leq n\leq m, \label{equ-10-9-6b}
\end{align}
\end{subequations}
with the auxiliary terms defined as
\begin{equation*}
\begin{aligned}
\eta_{\bz}^{n}=&\mh\left(\fe^{ \frac{\mh}{2}  \hbB(\bz(\tau_{n}))}-\fe^{ \frac{\mh}{2}\hbB(\bz^{n})}\right)\dot{\bz}^{n}+\frac{\eps^{2}\mh^{2}}{2}
\bigg[\bE\left(\bz(\tau_{n})+\frac{\mh}{2} \fe^{\frac{\mh}{2} \hbB(\bz(\tau_{n}))}\dot{\bz}(\tau_{n})\right)  
-\bE \left(\bz^{n}+\frac{\mh}{2} \fe^{\frac{\mh}{2} \hbB(\bz^{n})}\dot{\bz}^{n}\right) 
\bigg], \\
\eta_{\dot{\bz}}^{n}=&\left[\fe^{ \mh/2 \hbB(\tbz^{n}(\mh))}\fe^{ \mh/2 \hbB(\bz(\tau_{n}))}-\fe^{ \mh/2 \hbB(\bz^{n+1})}\fe^{ \mh/2 \hbB(\bz^{n})} \right]\dot{\bz}^{n}+r_{\dot{\bz}}^{n}+\mh\eps^{2}(1+\tilde{\gamma}_{n})\fe^{ \mh/2 \hbB(\tbz^{n}(\mh))} \\
&\bigg[\bE \left(\bz(\tau_{n})+ \mh/2 \fe^{ \mh/2 \hbB(\bz(\tau_{n}))}\dot{\bz}(\tau_{n})\right) -\bE \left(\bz^{n}+ \mh/2 \fe^{ \mh/2 \hbB(\bz^{n})}\dot{\bz}^{n}\right) 
\bigg],
\end{aligned}
\end{equation*}
and
\begin{equation*}
r_{\dot{\bz}}^{n}=\mh\eps^{2}(1+\tilde{\gamma}_{n})\left[\fe^{ \mh/2 \hbB(\tbz^{n}(\mh))}-\fe^{ \mh/2 \hbB(\bz^{n+1})}\right]\bE
\left(\bz^{n}+ \mh/2 \fe^{ \mh/2 \hbB(\bz^{n})}\dot{\bz}^{n}\right).
\end{equation*} 
It is dirct to observe that
$
\abs{r_{\dot{\bz}}^{n}}\lesssim \eps^{3}\mh^{2}(\abs{\zeta_{\bz}^{n}(\mh)}+\abs{e_{\bz}^{n+1}}).
$
The induction hypothesis gives the bounds
\begin{equation}\label{equ-10-9-7}
\abs{\eta_{\bz}^{n}}\lesssim \mh^{2}\eps^{2}(\abs{e_{\bz}^{n}}+\abs{e_{\dot{\bz}}^{n}}), \quad \abs{\eta_{\dot{\bz}}^{n}}\lesssim \eps^{2}\mh(\abs{e_{\bz}^{n}}+\abs{\zeta_{\bz}^{n}(\mh)}+\abs{e_{\bz}^{n+1}}+\abs{e_{\dot{\bz}}^{n}}), \quad 0\leq n<m.
\end{equation}
By taking the Euclidean norm on both sides of \eqref{equ-10-9-6a} and \eqref{equ-10-9-6b} and adopting the triangle inequality, while noting the orthogonality of the matrix $\fe^{\mh\hbB}$, we get
\begin{equation}\label{equ-10-9-8}
\abs{e_{\bz}^{n+1}}\leq \abs{e_{\bz}^{n}}+\mh\abs{e_{\dot{\bz}}^{n}}+\abs{\eta_{\bz}^{n}}+\abs{\xi_{\bz}^{n}}+\abs{\zeta_{\bz}^{n}(\mh)}, \ 
\abs{e_{\dot{\bz}}^{n+1}}\leq \abs{e_{\dot{\bz}}^{n}}+\abs{\eta_{\dot{\bz}}^{n}}+\abs{\xi_{\dot{\bz}}^{n}}+\abs{\zeta_{\dot{\bz}}^{n}(\mh)}. 
\end{equation}
Multiplying \eqref{equ-10-9-8} by $1/\eps$ and applying \eqref{equ-10-9-7} yields
\begin{equation*}
\abs{e_{\bz}^{n+1}}+\frac{1}{\eps}\abs{e_{\dot{\bz}}^{n+1}}-\abs{e_{\bz}^{n}}-\frac{1}{\eps}\abs{e_{\dot{\bz}}^{n}}\leq h\eps (\abs{e_{\bz}^{n}}+\abs{e_{\bz}}^{n+1}+\frac{1}{\eps}\abs{e_{\dot{\bz}}^{n}})+\abs{\xi_{\bz}^{n}}+\abs{\zeta_{\bz}^{n}(\mh)}
+\frac{1}{\eps}\abs{\xi_{\dot{\bz}}^{n}}+\frac{1}{\eps}\abs{\zeta_{\dot{\bz}}^{n}(\mh)}
\end{equation*}
for $0\leq n\leq m$. Summing them up for $0\leq n\leq m$ and noting that $e_{\bz}^{0}=e_{\dot{\bz}}^{0}=0$ gives
\begin{equation*}
\abs{e_{\bz}^{m+1}}+\frac{1}{\eps}\abs{e_{\dot{\bz}}^{n+1}}\lesssim \mh\eps \sum\limits_{n=0}^{m}(\abs{e_{\bz}^{n}}+\abs{e_{\bz}^{n+1}}+\frac{1}{\eps}\abs{e_{\dot{\bz}}^{n}})+\sum\limits_{n=0}^{m}(\abs{\xi_{\bz}^{n}}+\frac{1}{\eps}\abs{\xi_{\dot{\bz}}^{n}}
+\abs{\zeta_{\bz}^{n}(\mh)}+\frac{1}{\eps}\abs{\zeta_{\dot{\bz}}^{n}(\mh)}).
\end{equation*}
From the truncation error estimates in \eqref{equ-10-5-1} and \eqref{equ-10-9-1}, along with the fact that  $m\eps\mh \lesssim 1$, one obtains
$
\abs{e_{\bz}^{m+1}}+\frac{1}{\eps}\abs{e_{\dot{\bz}}^{n+1}}\lesssim \mh\eps \sum\limits_{n=0}^{m}(\abs{e_{\bz}^{n}}+\abs{e_{\bz}^{n+1}}+\frac{1}{\eps}\abs{e_{\dot{\bz}}^{n}})+\mh^{2}.
$
It then follows from Gronwall's inequality that
$
\abs{e_{\bz}^{m+1}}+\frac{1}{\eps}\abs{e_{\dot{\bz}}^{n+1}}\lesssim \mh^{2}$ for $ 0\leq m <T/(\eps\mh).
$
Since 
\begin{equation*}
\abs{\bz^{m+1}}\leq \abs{\bz(\tau_{m+1})}+\abs{e_{\bz}^{m+1}}, \quad \abs{\dot{\bz}^{m+1}}\leq \abs{\dot{\bz}(\tau_{m+1})}+\abs{e_{\dot{\bz}}^{m+1}},
\end{equation*}
there exists a constant $\mh_{0}>0$ independent of $\eps$ and $m$, such that for any $0<\mh<\mh_{0}$, the estimate \eqref{solu-bound} holds for $m+1$. This completes the induction and establishes the convergence result.
\end{proof}

\subsection{Optimal error estimate}

To refine the error bounds to an optimal dependence of $\eps$, the following theorem is established.

\begin{theorem}\label{the-2}
Assume that $\bB(\cdot), \bE(\cdot)\in C^{1}(\mathbb{R}^d)$. For a fixed time $T>0$, let $\bz^{n}, \dot{\bz}^{n}$ be the numerical solution obtained by the RS2-PIC \eqref{st-rsv-resc} for solving \eqref{long-sys} up to $T/\eps$. Then there exists a constant $N_{0}>0$ independent of $\eps$, such that for any integer $N\geq N_{0}$ and the time step $\mh=\frac{T_{0}}{N}$, we have for some $m_0>0$ arbitrarily large
\begin{equation}\label{opt-err}
\abs{\bz^{n}-\bz(\tau_{n})}\lesssim \eps\mh^{2}+N^{-m_0}, \quad \abs{\dot{\bz}^{n}-\dot{\bz}(\tau_{n})}\lesssim \eps\mh^{2}+N^{-m_0}, \quad 
0\leq n \leq T/(\eps\mh).
\end{equation}
\end{theorem}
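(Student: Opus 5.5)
The plan is to improve on the rough estimate of Proposition \ref{pro-2}, which is lossy only because the local errors are summed over $T/(\eps\mh)$ steps without cancellation. By Proposition \ref{pro-2}, both the numerical and exact solutions stay in a fixed bounded set, so all Lipschitz constants are uniform in $\eps$. From Lemma \ref{loca-err} and \eqref{equ-10-5-1}, the per-step defects are $\mathcal{O}(\eps\mh^{3})$ in position and $\mathcal{O}(\eps^{2}\mh^{3})$ in velocity. Summing $T/(\eps\mh)$ of them naively gives the $\mh^{2}$ bound of Proposition \ref{pro-2}. The task is therefore to gain one factor of $\eps$ in the accumulated position error.

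\textbf{Step 1: expose the oscillatory structure of the defects.} I would revisit the proof of Lemma \ref{loca-err} and write the dominant part of $\xi_{\bz}^{n}$ explicitly. Only the $\mathcal{O}(\eps\mh^{3})$ contributions need this treatment; those coming from $\dot{\bz}(\tau_n)$ interacting with the exponentials are the ones of concern. I would write this part as $\mh^{3}\,\Phi(\bz(\tau_n))\,\dot{\bz}(\tau_n)$, where $\Phi$ is a smooth matrix function built from $\hbB_0$. Every remaining piece is already $\mathcal{O}(\eps^{2}\mh^{3})$: these come from $\eps\bB_1$ via \eqref{equ-10-7-8}, from $\eps^{2}\bE$, and from $\tilde\gamma_n=\mathcal{O}(\eps^{2}\mh^{2})$. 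Such pieces sum over $T/(\eps\mh)$ steps to $\mathcal{O}(\eps\mh^{2})$. Next I would use \eqref{long-sys} to write $\dot{\bz}(\tau_n)=\fe^{\tau_n\hbB_0}\dot{\bz}(0)+\mathcal{O}(\eps^{2}\tau_n)$, or more precisely in modulated form $\dot{\bz}(\tau)=\fe^{\tau\hbB_0}\bw(\tau)$ with $\dot{\bw}=\mathcal{O}(\eps^{2})$. The leading defect then becomes $\mh^{3}\Phi\,\fe^{\tau_n\hbB_0}\bw(\tau_n)$. This is a slowly varying amplitude times a $T_0$-periodic phase with zero-mean part. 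The velocity defects are treated in the same way.

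\textbf{Step 2: averaging over one period.} Next I would unroll the error recursion \eqref{equ-10-9-6} as a discrete Duhamel sum. For the position, $e_{\bz}^{n}=\sum_{k<n}(\text{propagated defect}_k)$ plus a Lipschitz-feedback term with coefficient $\mathcal{O}(\eps^{2}\mh)$, which Gronwall absorbs over time $T/\eps$. Because $\mh=T_0/N$, the sequence $\fe^{k\mh\hbB_0}$ is exactly $N$-periodic in $k$. I would group the sum into blocks of $N$ consecutive steps, i.e.\ one period each. Within a block, the discrete sum $\sum_{j=0}^{N-1}\fe^{j\mh\hbB_0}$ acts on the nonkernel part of $\bw$ (the part orthogonal to $\bB_0$) and vanishes exactly, by the roots-of-unity identity. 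On the kernel part the leading defect is smaller, because $\hbB_0$ annihilates it. Freezing the slow amplitude $\bw$ across a block costs $\mathcal{O}(\eps^{2}T_0)$ relative to its size, and the propagation factors $\fe^{\frac{\mh}{2}\hbB(\cdot)}$ deviate from $\fe^{\frac{\mh}{2}\hbB_0}$ only by $\mathcal{O}(\eps\mh)$. Each block of $N$ steps therefore contributes $\mathcal{O}(\eps^{2}\mh^{2})$ instead of $N\cdot\eps\mh^{3}=\mathcal{O}(\eps\mh^{2})$. There are $\mathcal{O}(1/\eps)$ blocks, so the total is $\mathcal{O}(\eps\mh^{2})$. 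The velocity error follows the same way, its defect being one order smaller in $\eps$.

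\textbf{Step 3: the $N^{-m_0}$ term and closing.} The same remainders can alternatively be handled by expanding the numerical flow in $\eps$ and comparing with the averaged continuous flow. Resonance-type residues, from nonsmooth dependence through the discrete grid, are controlled by summation by parts in $N$, and repeating this gives $N^{-m_0}$ for any $m_0$. Finally I would feed the block estimates into a discrete Gronwall argument over $T/(\eps\mh)$ steps. The bootstrap hypothesis \eqref{solu-bound} is retained as in Proposition \ref{pro-2}, and this yields \eqref{opt-err}.

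\textbf{Main obstacle.} The hardest step is Step 1 combined with the cancellation in Step 2. One must verify that, after removing $\mathcal{O}(\eps^{2}\mh^{3})$ terms, the $\mathcal{O}(\eps\mh^{3})$ defect really is a fixed linear map applied to $\fe^{\tau_n\hbB_0}\bw$ with no nonoscillatory component. One must also check that the relaxation coefficient $\tilde\gamma_n$, which depends on global energy differences, does not introduce a non-averaging contribution. I would first attempt this via the explicit Strang structure: the symmetric composition should cancel the even, nonoscillatory parts of the error.
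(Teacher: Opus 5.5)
\textbf{Gap: the velocity error feeding into the position error is never controlled.}
Your Step 2 correctly reproduces one ingredient of the paper's proof: averaging the position defect over one gyro-period (the paper's $\chi^{m}$). Your roots-of-unity identity in fact shows $\chi^{m}=0$ exactly for $N\ge 2$, so the vague Step 3 is not needed.

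What you never address is how the velocity error feeds the position error. In \eqref{equ-10-9-6a} the position error receives $\mh\,\fe^{\frac{\mh}{2}\hbB(\bz(\tau_n))}e_{\dot\bz}^{n}$ at every step. This coupling has coefficient $\mh$; it is not a Lipschitz feedback with coefficient $\mathcal{O}(\eps^{2}\mh)$. Even granting the optimal $\abs{e_{\dot\bz}^{n}}\lesssim\eps\mh^{2}$, summing this term over $T/(\eps\mh)$ steps gives $\mathcal{O}(\mh^{2})$. That is exactly the lossy bound of Proposition \ref{pro-2}.

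Equivalently, in your discrete Duhamel sum the position--velocity block of the propagator, $\mh\sum_{j}\fe^{(j+\frac12)\mh\hbB_0}$, grows linearly in time along the direction of $\bB_0$. A velocity defect of size $\eps^{2}\mh^{3}$ can therefore be amplified by a factor $T/\eps$. Saying the velocity error ``follows the same way'' does not touch this. Treating the velocity defects ``in the same way'' as in Step 1 would also fail: their leading part is proportional to $\eps^{2}\mh^{3}\hbB_0^{2}\bE(\bz(\tau_n))$, which is slowly varying, not oscillatory in $\tau_n$. The cancellation must come from the rotation of the propagated error, not from the defect itself.

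\textbf{What is missing: the paper's ``refined error equation'' step.}
Unroll the velocity recursion inside one period, $e_{\dot\bz}^{n,m}=\fe^{n\mh\hbB_0}e_{\dot\bz}^{0,m}+\cdots$, and insert it into the summed position recursion. The identity $\mh\sum_{n=0}^{N-1}\fe^{(n+\frac12)\mh\hbB_0}=T_0\tilde{\bB}_0\tilde{\bB}_0^{\intercal}$ then shows that, from period to period, the position error is driven only by the parallel component $e_{\dot\bz,\|}^{0,m}$. The perpendicular part merely rotates and averages out.

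You then need a separate estimate $\abs{e_{\dot\bz,\|}}\lesssim\eps^{2}\mh^{2}$. This rests on $\abs{\xi_{\dot\bz}^{n}\cdot\tilde{\bB}}\lesssim\eps^{3}\mh^{3}$, a full power of $\eps$ better than $\abs{\xi_{\dot\bz}^{n}}\lesssim\eps^{2}\mh^{3}$. Two facts give this extra power:
\begin{itemize}
\item the $\hbB^{2}\bE$ term is annihilated by the projection, since $\bB^{\intercal}\hbB=0$;
\item the remaining midpoint-rule error gains a factor $\eps$ because $\dot{\tbz}^{n}=\mathcal{O}(\eps)$.
\end{itemize}

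The velocity feedback $\eta_{\dot\bz}^{n}\approx\mh\eps^{2}\nabla\bE\,e_{\bz}^{n}$ has a nonzero parallel part, which also drives the position error secularly. So the closing Gronwall argument must be run on the coupled quantity $\abs{e_{\bz}^{0,m}}+\eps^{-1}\abs{e_{\dot\bz,\|}^{0,m}}$ over $M=\mathcal{O}(1/\eps)$ periods, not on the position error alone.

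By contrast, the relaxation coefficient you flag as the main obstacle is harmless. Since $\tilde{\gamma}_{n}=\mathcal{O}(\eps^{2}\mh^{2})$, it contributes only $\mathcal{O}(\eps^{4}\mh^{3})$ to the velocity defect.
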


\begin{proof}
For any fixed $T>0$, the relation $ T/\eps =T_{0}M+\tau_{r}$ holds with $0\leq \tau_{r}<T_{0}$, where the integer
$M=\left\lfloor\frac{T}{\eps T_{0}}\right\rfloor=\mathcal{O}(1/\eps)$. In the subsequent analysis, $\tau_{r}$ can be set to zero without loss of generality.

\textbf{Update of notation.} With $N_0 > 0$ chosen to satisfy $\mh = T_0/N \leq \mh_0$ in Proposition \ref{pro-2}, the boundedness \eqref{solu-bound} is guaranteed for any $N \geq N_0$. To clarify the time scale, we denote by $\tau_n^m$ $(0 \leq n \leq N)$ the time grids in the $m$-th period, i.e.,
$\tau_{n}^{m}=mT_{0}+n\mh, 0\leq m<M$, then the numerical solutions obtained from the scheme \eqref{st-rsv-resc} at $\tau_{n}^{m}$ are denoted as
$
\bz_{n}^{m}\approx \bz(\tau_{n}^{m}), \ \dot{\bz}_{n}^{m}\approx \dot{\bz}(\tau_{n}^{m}), \  0\leq m<M, \ 0\leq n\leq N,
$
and the error as
$
e_{\bz}^{n,m}=\bz(\tau_{n}^{m})-\bz_{n}^{m}, \  e_{\dot{\bz}}^{n,m}=\dot{\bz}(\tau_{n}^{m})-\dot{\bz}_{n}^{m}.
$ 
Under the present notation, the identities $e_{\bz}^{0,m+1}=e_{\bz}^{N,m}$ and $e_{\dot{\bz}}^{0,m+1}=e_{\dot{\bz}}^{N,m}$ hold. Consequently, the error \eqref{equ-10-9-6} now reads as
\begin{subequations}\label{equ-10-9-9}
\begin{align}
e_{\bz}^{n+1,m}=&e_{\bz}^{n,m}+\mh\fe^{ \mh/2 \hbB(\bz(\tau_{n}^{m}))}e_{\dot{\bz}}^{n,m}+\eta_{\bz}^{n,m}+\xi_{\bz}^{n,m}+\zeta_{\bz}^{n,m}, \label{equ-10-9-9a} \\
e_{\dot{\bz}}^{n+1,m}=&\fe^{ \mh/2 \hbB(\tbz_{n}^{m}(\mh))}\fe^{ \mh/2 \hbB(\bz(\tau_{n}^{m}))}e_{\dot{\bz}}^{n,m}+\eta_{\dot{\bz}}^{n,m}+\xi_{\dot{\bz}}^{n,m}+\zeta_{\dot{\bz}}^{n,m},
\quad 0\leq n\leq N-1, \ 0\leq m< M. \label{equ-10-9-9b}
\end{align}
\end{subequations}
The notations for all remaining error terms are introduced in a consistent manner. Specifically, the local error at $\tau_n^m$ is denoted as
\begin{equation}\label{equ-10-9-10}
\xi_{\bz,1}^{n,m}=\int_{0}^{\mh}\fe^{s\hbB(\bz(\tau_{n}^{m}+ \mh/2 ))}ds \dot{\bz}(\tau_{n}^{m})-\mh\fe^{ \mh/2 \hbB(\bz(\tau_{n}^{m}))}\dot{\bz}(\tau_{n}^{m}).
\end{equation}

Following the argument in Proposition \ref{pro-2}, the error equation \eqref{equ-10-9-9} yields that
\begin{equation*}
\begin{aligned}
\frac{1}{\eps}\abs{e_{\bz}^{j,m}}-\frac{1}{\eps}\abs{e_{\bz}^{j-1,m}}\lesssim & \ \frac{\mh}{\eps}\abs{e_{\dot{\bz}}^{j-1,m}}+\eps\mh^{2}\left(\abs{e_{\bz}^{j-1,m}}+\abs{e_{\dot{\bz}}^{j-1,m}}\right)+\mh^{3}, \\
\frac{1}{\eps}\abs{e_{\dot{\bz}}^{j,m}}-\frac{1}{\eps}\abs{e_{\dot{\bz}}^{j-1,m}}\lesssim & \ \eps\mh\left(\abs{e_{\bz}^{j-1,m}}+\abs{e_{\bz}^{j,m}}+\abs{e_{\dot{\bz}}^{j-1,m}}\right)+\eps\mh^{3}, \quad 1\leq j\leq N, \ 0\leq m<M.
\end{aligned}
\end{equation*}
To achieve tighter control of the error in $\bz$, equation \eqref{equ-10-9-9} is scaled by $\eps$. Combining the above two inequalities, summing over $j=1,\ldots,n$ for any $1 \leq n \leq N$, and applying Gronwall's inequality yields the error estimate within each period
\begin{equation*}
\frac{1}{\eps}\abs{e_{\bz}^{n,m}}+\frac{1}{\eps}\abs{e_{\dot{\bz}}^{n,m}}\lesssim \mh^{2}+\frac{1}{\eps}\abs{e_{\bz}^{0,m}}+\frac{1}{\eps}\abs{e_{\dot{\bz}}^{0,m}}, \quad 1\leq n\leq N, \ 0\leq m<M.
\end{equation*}
In light of the fact that $\abs{e_{\dot{\bz}}^{0,m}}\lesssim \eps\mh^{2}$, it follows that
\begin{equation}\label{equ-10-9-11}
\abs{e_{\bz}^{n,m}}\lesssim \ \eps\mh^{2}+\abs{e_{\bz}^{0,m}},  \ \
\abs{e_{\dot{\bz}}^{n,m}}\lesssim \ \eps\mh^{2}+\abs{e_{\bz}^{0,m}}, \quad 1\leq n\leq N, \ 0\leq m<M.  
\end{equation}

\textbf{Refined local error.} The estimate for $\xi_{\bz,1}^{n,m}$ is now refined. It is directly observed that
\begin{equation}\label{equ-10-9-12}
\abs{\bB(z(\tau))-\bB_{0}}=\abs{\eps\bB_{1}(\bz(\tau))}\lesssim \eps, \quad 0\leq\tau\leq T/\eps.
\end{equation}
Rewrite the equation \eqref{long-sys} at $\tau=mT_{0}+s$ as
\begin{equation}\label{equ-10-9-13}
\ddot{\bz}(mT_{0}+s)=\dot{\bz}(mT_{0}+s)\times \bB_{0}+f^{m}(s), \quad 0\leq s\leq T_{0},
\end{equation}
with $f^{m}(s):=\dot{\bz}(mT_{0}+s)\times [\bB(\bz(mT_{0}+s))-\bB_{0}]+\eps^{2}\bE(\bz(mT_{0}+s))$. The fact \eqref{equ-10-9-12} implies that $\abs{f^{m}(s)}\lesssim \eps^{2}$. Applying Duhamel's principle to \eqref{equ-10-9-13} then gives
$
\dot{\bz}(mT_{0}+s)=\fe^{s\hbB_{0}}\dot{\bz}(mT_{0})+\int_{0}^{s}\fe^{(s-\rho)\hbB_{0}}f^{m}(\rho)d\rho,
$
which implies that
\begin{equation}\label{equ-10-9-14}
\abs{\dot{\bz}(mT_{0}+s)-\fe^{s\hbB_{0}}\dot{\bz}(mT_{0})}\leq Cs\eps^{2}, \quad 0\leq s\leq T_{0},
\end{equation}
for some constant $C>0$ independent of $\eps, s$ or $m$. In view of the estimates \eqref{equ-10-9-12} and \eqref{equ-10-9-14}, we decompose $\xi_{\bz,1}^{n,m}$ in \eqref{equ-10-9-10} into two parts:
$
\xi_{\bz,1}^{n,m}=\xi_{\bz,1,1}^{n,m}+\xi_{\bz,1,2}^{n,m}, 0\leq n<N,
$
where
\begin{equation*}
\xi_{\bz,1,1}^{n,m}=\int_{0}^{\mh}\fe^{s\hbB_{0}}ds \fe^{\tau_{n}^{m}\hbB_{0}}\dot{\bz}(mT_{0})-\mh\fe^{ \mh/2 \hbB_{0}}\fe^{\tau_{n}^{m}\hbB_{0}}\dot{\bz}(mT_{0})
\end{equation*}
and
\begin{equation*}
\begin{aligned}
\xi_{\bz,1,2}^{n,m}=&\int_{0}^{\mh}\left(\fe^{s\hbB(\bz(\tau_{n}^{m}))}-\fe^{s\hbB_{0}}\right)ds \dot{\bz}(\tau_{n}^{m})-\mh(\fe^{ \mh/2 \hbB(\bz(\tau_{n}^{m}))}-\fe^{ \mh/2 \hbB_{0}})\dot{\bz}(\tau_{n}^{m})  \\
&+\int_{0}^{\mh}\fe^{s\hbB_{0}}ds(\dot{\bz}(\tau_{n}^{m})-\fe^{\tau_{n}^{m}\hbB_{0}}\dot{\bz}(mT_{0}))-\mh\fe^{ \mh/2 \hbB_{0}}(\dot{\bz}(\tau_{n}^{m})-\fe^{\tau_{n}^{m}\hbB_{0}}\dot{\bz}(mT_{0})) \\
&+\int_{0}^{\mh}\left(\fe^{s\hbB(\bz(\tau_{n}^{m}+ \mh/2 ))}-\fe^{s\hbB(\bz(\tau_{n}^{m}))}\right)ds \dot{\bz}(\tau_{n}^{m}).
\end{aligned}
\end{equation*}
Attention now turns to the term $\xi_{\bz,1,2}^{n,m}$. The integral midpoint formula in conjunction with \eqref{equ-10-9-12} gives
\begin{equation*}
\abs{\int_{0}^{\mh}\left(\fe^{s\hbB(\bz(\tau_{n}^{m}))}-\fe^{s\hbB_{0}}\right)ds -\mh(\fe^{ \mh/2 \hbB(\bz(\tau_{n}^{m}))}-\fe^{ \mh/2 \hbB_{0}})}
\lesssim \eps\mh^{3}.
\end{equation*}
Noting that
$
\abs{\fe^{s\hbB(\bz(\tau_{n}^{m}+ \mh/2 ))}-\fe^{s\hbB(\bz(\tau_{n}^{m}))}}\lesssim \eps^{2}sh
$
and regarding the last two terms in $\xi_{\bz,1,2}^{n,m}$, it can be observed that
\begin{equation*}
\begin{aligned}
&\int_{0}^{\mh}\fe^{s\hbB_{0}}ds(\dot{\bz}(\tau_{n}^{m})-\fe^{\tau_{n}^{m}\hbB_{0}}\dot{\bz}(mT_{0}))
-\mh\fe^{ \mh/2 \hbB_{0}}(\dot{\bz}(\tau_{n}^{m})-\fe^{\tau_{n}^{m}\hbB_{0}}\dot{\bz}(mT_{0}))  \\
=&\left(\int_{0}^{\mh}\fe^{s\hbB_{0}}ds-\mh\fe^{ \mh/2 \hbB_{0}}\right)(\dot{\bz}(\tau_{n}^{m})-\fe^{\tau_{n}^{m}\hbB_{0}}\dot{\bz}(mT_{0})),
\end{aligned}
\end{equation*}
where $\abs{\int_{0}^{\mh}\fe^{s\hbB_{0}}ds-\mh\fe^{ \mh/2 \hbB_{0}}}\lesssim \mh^{3}$ is estimated again by the integral midpoint formula.
Additionally, invoking periodicity and \eqref{equ-10-9-14} leads to
\begin{equation*}
\dot{\bz}(\tau_{n}^{m})-\fe^{\tau_{n}^{m}\hbB_{0}}\dot{\bz}(mT_{0})=\dot{\bz}(mT_{0}+n\mh)-\fe^{n\mh\hbB_{0}}\dot{\bz}(mT_{0})=\mathcal{O}(\eps^{2}).
\end{equation*}
Therefore, combining the above estimates yields
$
\abs{\xi_{\bz,1,2}^{n,m}}\lesssim \eps^{2}\mh^{3}.
$
For $\xi_{\bz,1,1}^{n,m}$, summing over $n=0,\ldots,N-1$ gives
\begin{equation*}
\chi^{m}:=\sum\limits_{n=0}^{N-1}\xi_{\bz,1,1}^{n,m}=\int_{0}^{T_{0}}\fe^{s\hbB_{0}}ds \dot{\bz}(mT_{0})-\mh\sum\limits_{n=0}^{N-1}\fe^{(n+\frac{1}{2})\mh\hbB_{0}}\dot{\bz}(mT_{0}), \quad 0\leq m<M.
\end{equation*}
Note that $\chi^{m}$ corresponds exactly to the quadrature error of the midpoint rule applied to the smooth periodic function $\fe^{s\hbB_{0}}$ over a single period, and so
$
\abs{\chi^{m}}\lesssim \eps N^{-m_{0}}, 0\leq m<M,
$
for some $m_{0}>0$ arbitrarily large. Consequently, it follows that
$
\abs{\sum\limits_{n=0}^{N-1}\xi_{\bz,1}^{n,m}}\leq \abs{\chi^{m}}+\abs{\sum\limits_{n=0}^{N-1}\xi_{\bz,1,2}^{n,m}}\lesssim \eps^{2}\mh^{2}+\eps N^{-m_{0}}.
$

\textbf{Refined error equation.} We now present a clearer characterization of the error propagation over each period. For some $0\leq m<M$, summing equation \eqref{equ-10-9-9a} over $n=0,\ldots,N-1$ produces
\begin{equation*}
e_{\bz}^{N,m}=e_{\bz}^{0,m}+\mh\sum\limits_{n=0}^{N-1}\fe^{ \mh/2 \hbB(\bz(\tau_{n}^{m}))}e_{\dot{\bz}}^{n,m}+\sum\limits_{n=0}^{N-1}(\eta_{\bz}^{n,m}+\zeta_{\bz}^{n,m}(\mh))
+\sum\limits_{n=0}^{N-1}(\xi_{\bz,1}^{n,m}+\xi_{\bz,2}^{n,m}).
\end{equation*}
Then, an application of \eqref{equ-10-9-12} shows that 
\begin{equation}\label{equ-10-10-1}
e_{\bz}^{N,m}=e_{\bz}^{0,m}+\mh\sum\limits_{n=0}^{N-1}\fe^{ \mh/2 \hbB_{0}}e_{\dot{\bz}}^{n,m}+\sum\limits_{n=0}^{N-1}(\eta_{\bz}^{n,m}+\zeta_{\bz}^{n,m}(\mh))
+\sum\limits_{n=0}^{N-1}(\xi_{\bz,1}^{n,m}+\xi_{\bz,2}^{n,m})+\delta_{\bz}^{n,m},
\end{equation}
where $\abs{\delta_{\bz}^{n,m}}\lesssim \eps^{2}\mh^{3},\ 0\leq m<M$ owing to $e_{\dot{\bz}}^{n,m}=\mathcal{O}(\eps\mh^{2})$ from Proposition \ref{pro-2}. On the other hand, equation \eqref{equ-10-9-9b} can be rewritten using \eqref{equ-10-9-12} as
\begin{equation}\label{equ-10-10-2}
e_{\dot{\bz}}^{n+1,m}=\fe^{\mh\hbB_{0}}e_{\dot{\bz}}^{n,m}+\eta_{\dot{\bz}}^{n,m}+\xi_{\dot{\bz}}^{n,m}+\zeta_{\dot{\bz}}^{n,m}+\delta_{\dot{\bz}}^{n,m},
\quad 1\leq n\leq N, \ 0\leq m<M,
\end{equation}
where
$
\delta_{\dot{\bz}}^{n,m}=\left(\fe^{ \mh/2 \hbB(\tbz_{n}^{m}(\mh))}\fe^{ \mh/2 \hbB(\bz(\tau_{n}^{m}))}-\fe^{\mh\hbB_{0}}\right)e_{\dot{\bz}}^{n,m},
$
and it follows that
\begin{equation}\label{equ-10-10-4}
\abs{\delta_{\dot{\bz}}^{n,m}}\lesssim \eps^{2}\mh^{3}.
\end{equation}
A recursive derivation from \eqref{equ-10-10-2} gives, for any $1\leq n\leq N,\ 0\leq m<M$,
\begin{equation*}
e_{\dot{\bz}}^{n,m}=\fe^{n\mh\hbB_{0}}e_{\dot{\bz}}^{0,m}+\sum\limits_{j=0}^{n-1}\fe^{(n-1-j)\mh\hbB_{0}}\left[\eta_{\dot{\bz}}^{j,m}+\xi_{\dot{\bz}}^{j,m}
+\zeta_{\dot{\bz}}^{j,m}(\mh)+\delta_{\dot{\bz}}^{j,m}\right],
\end{equation*} 
and thus
\begin{equation*}
\mh\sum\limits_{n=0}^{N-1}\fe^{ \mh/2 \hbB_{0}}e_{\dot{\bz}}^{n,m}=\mh\sum\limits_{n=0}^{N-1}\fe^{(n+\frac{1}{2})\mh\hbB_{0}}e_{\dot{\bz}}^{0,m}
+\mh\sum_{n=0}^{N-1}\sum_{j=0}^{n-1}\fe^{(n-\frac{1}{2}-j)\mh\hbB_{0}}\left[\eta_{\dot{\bz}}^{j,m}+\xi_{\dot{\bz}}^{j,m}
+\zeta_{\dot{\bz}}^{j,m}(\mh)+\delta_{\dot{\bz}}^{j,m}\right].
\end{equation*}
With this result, equation \eqref{equ-10-10-1} can be rewritten as
\begin{equation}\label{equ-10-10-3}
e_{\bz}^{N,m}=e_{\bz}^{0,m}+\mh\sum\limits_{n=0}^{N-1}\fe^{(n+\frac{1}{2})\mh\hbB_{0}}e_{\dot{\bz}}^{0,m}+\mu^{m}, \quad 0\leq m<M,
\end{equation}
where
\begin{equation*}
\mu^{m}=\sum\limits_{n=0}^{N-1}(\eta_{\bz}^{n,m}+\xi_{\bz}^{n,m}+\zeta_{\bz}^{n,m}(\mh))+\delta_{\bz}^{n,m}+\mh\sum_{n=0}^{N-1}\sum_{j=0}^{n-1}\fe^{(n-\frac{1}{2}-j)\mh\hbB_{0}}\left[\eta_{\dot{\bz}}^{j,m}+\xi_{\dot{\bz}}^{j,m}
+\zeta_{\dot{\bz}}^{j,m}(\mh)+\delta_{\dot{\bz}}^{j,m}\right].
\end{equation*}
From \eqref{equ-10-5-1}, \eqref{equ-10-9-1}, \eqref{equ-10-9-7} and \eqref{equ-10-10-4}, the last term above satisfies
\begin{equation*}
\abs{\mh\sum_{n=0}^{N-1}\sum_{j=0}^{n-1}\fe^{(n-\frac{1}{2}-j)\mh\hbB_{0}}\left[\eta_{\dot{\bz}}^{j,m}+\xi_{\dot{\bz}}^{j,m}
+\zeta_{\dot{\bz}}^{j,m}(\mh)+\delta_{\dot{\bz}}^{j,m}\right]}\lesssim \eps^{2}\mh^{2}+\eps^{2}\mh\sum\limits_{n=0}^{N-1}(\abs{e_{\bz}^{n,m}}+\abs{e_{\bz}^{n+1,m}}+\abs{e_{\dot{\bz}}^{n,m}}),
\end{equation*}
and it can be inferred that $$
\abs{\mu^{m}}\lesssim \eps^{2}\mh^{2}+\eps N^{-m_{0}}+\eps^{2}\mh \sum\limits_{n=0}^{N-1}(\abs{e_{\bz}^{n,m}}+\abs{e_{\bz}^{n+1,m}}+\abs{e_{\dot{\bz}}^{n,m}}),\  0\leq m<M.$$ 
Reapplying the quadrature error estimate of the midpoint rule to \eqref{equ-10-10-3} leads to
\begin{equation}\label{equ-10-10-5}
\abs{e_{\bz}^{N,m}}-\abs{e_{\bz}^{0,m}}\lesssim \abs{\int_{0}^{T_{0}}\fe^{s\hbB_{0}}ds e_{\dot{\bz}}^{0,m}}+\eps^{2}\mh^{2}+\eps N^{-m_{0}}+\eps^{2}\mh \sum\limits_{n=0}^{N-1}(\abs{e_{\bz}^{n,m}}+\abs{e_{\bz}^{n+1,m}}+\abs{e_{\dot{\bz}}^{n,m}}), \ 0\leq m<M.
\end{equation}
By the Rodrigue's formula,
\begin{equation*}
\fe^{s\hbB_{0}}e^{0,m}_{\dot{\bz}}=\cos(s\abs{\bB_{0}})e^{0,m}_{\dot{\bz}}+\sin(s\abs{\bB_{0}})e^{0,m}_{\dot{\bz}}\times \tilde{\bB}_{0}+
(1-\cos(s\abs{\bB_{0}}))(\tilde{\bB}_{0}\cdot e^{0,m}_{\dot{\bz}})\tilde{\bB}_{0},
\end{equation*}
where $\tilde{\bB}_{0}$ denotes the normalized magnetic field vector, i.e., $\tilde{\bB}_{0}=\bB_{0}/\abs{\bB_{0}}$. Integrating the above term over one period gives
$
\int_{0}^{T_{0}}\fe^{s\hbB_{0}}ds e_{\dot{\bz}}^{0,m}=T_{0}(\tilde{\bB}_{0}\cdot e_{\dot{\bz}}^{0,m})\tilde{\bB}_{0}.
$
Thus, \eqref{equ-10-10-5} indicates that
\begin{equation*}
\begin{aligned}
\abs{e_{\bz}^{N,m}}-\abs{e_{\bz}^{0,m}}\lesssim & \abs{(\tilde{\bB}_{0}\cdot e_{\dot{\bz}}^{0,m})\tilde{\bB}_{0}}+\eps^{2}\mh^{2}+\eps N^{-m_{0}}+\eps^{2}\mh
\sum\limits_{n=0}^{N-1}(\abs{e_{\bz}^{n,m}}+\abs{e_{\bz}^{n+1,m}}+\abs{e_{\dot{\bz}}^{n,m}}) \\
\lesssim & \abs{e_{\dot{\bz},\|}^{0,m}}+\eps^{2}\mh^{2}+\eps N^{-m_{0}}+\eps^{2}\mh
\sum\limits_{n=0}^{N-1}(\abs{e_{\bz}^{n,m}}+\abs{e_{\bz}^{n+1,m}}+\abs{e_{\dot{\bz}}^{n,m}}), \quad 0\leq m<M,
\end{aligned}
\end{equation*}
where $e_{\dot{\bz},\|}^{0,m}$ denotes the component of the error $e_{\dot{\bz}}^{0,m}$ parallel to the magnetic field $\bB(\bz(mT_{0}))$, i.e.,
\begin{equation*}
e_{\dot{\bz},\|}^{n,m}:=(\tilde{\bB}^{n,m}\cdot e_{\dot{\bz}}^{n,m})\tilde{\bB}^{n,m}, \quad \tilde{\bB}^{n,m}:=\frac{\bB(\bz(\tau_{n}^{m}))}{\bB(\bz(\tau_{n}^{m}))}, \quad 0\leq n\leq N,\ 0\leq m <M.
\end{equation*}
Then by \eqref{equ-10-9-11} and noting $e_{\bz}^{N,m}=e_{\bz}^{0,m+1}$, we arrive that
\begin{equation}\label{equ-10-10-6}
\abs{e_{\bz}^{0,m+1}}-\abs{e_{\bz}^{0,m}}\lesssim \abs{e_{\dot{\bz},\|}^{0,m}}+\eps^{2}(\abs{e_{\bz}^{0,m}}+\abs{e_{\bz}^{0,m+1}})+\eps^{2}\mh^{2}+\eps N^{-m_{0}}, \quad 0\leq m<M.
\end{equation}
Taking the inner product of \eqref{equ-10-9-9b} with the unit vector $\tilde{\bB}^{n+1,m}$ yields
\begin{equation}\label{equ-10-10-7}
\abs{e_{\dot{\bz},\|}^{n+1,m}}\leq \abs{\tilde{\bB}^{n+1,m}\cdot \left(\fe^{ \mh/2 \hbB(\tbz_{n}^{m}(\mh))}\fe^{ \mh/2 \hbB(\bz(\tau_{n}^{m}))}e_{\dot{\bz}}^{n,m} \right)}+\abs{\eta_{\dot{\bz}}^{n,m}+\zeta_{\dot{\bz}}^{n,m}(\mh)}+\abs{\xi_{\dot{\bz}}^{n,m}\cdot \tilde{\bB}^{n+1,m}}.
\end{equation}
From the relation
$
\tilde{\bB}^{n+1,m}=\tilde{\bB}^{n,m}+\mathcal{O}(\eps^{2}\mh)
$
and Rodrigue's formula, one obtains
\begin{equation*}
\abs{\tilde{\bB}^{n+1,m}\cdot \left(\fe^{ \mh/2 \hbB(\tbz_{n}^{m}(\mh))}\fe^{ \mh/2 \hbB(\bz(\tau_{n}^{m}))}e_{\dot{\bz}}^{n,m} \right)}\lesssim
\abs{e_{\dot{\bz},\|}^{n,m}}+\eps^{3}\mh^{3}.
\end{equation*}
Together with \eqref{equ-10-5-1} and \eqref{equ-10-9-7}, it then follows from \eqref{equ-10-10-7} that for $0\leq n\leq N,\ 0\leq m<M$,
\begin{equation}\label{equ-10-10-8}
\abs{e_{\dot{\bz},\|}^{n+1,m}}-\abs{e_{\dot{\bz},\|}^{n,m}}\lesssim \eps^{2}\mh(\abs{e_{\bz}^{n,m}}+\abs{e_{\bz}^{n+1,m}})+\abs{\xi_{\dot{\bz}}^{n,m}\cdot \tilde{\bB}^{n+1,m}}+\eps^{3}\mh^{3}.
\end{equation}
Recalling \eqref{equ-10-7-3} and applying Rodrigue's formula shows simply that $\abs{\xi_{\dot{\bz}}^{n,m}\cdot \tilde{\bB}^{n,m}}\lesssim \eps^{3}\mh^{3}$.
Hence, \eqref{equ-10-10-8} leads to
\begin{equation}\label{equ-10-10-9}
\abs{e_{\dot{\bz},\|}^{n+1,m}}-\abs{e_{\dot{\bz},\|}^{n,m}}\lesssim \eps^{2}\mh \sum\limits_{n=0}^{N-1}(\abs{e_{\bz}^{n,m}}+\abs{e_{\bz}^{n+1,m}})+\eps^{3}\mh^{3}, \quad 0\leq n<N, \ 0\leq m<M.
\end{equation}
Summing up \eqref{equ-10-10-9} for $n=0,\ldots,N-1$ gives
$
\abs{e_{\dot{\bz},\|}^{0,m+1}}-\abs{e_{\dot{\bz},\|}^{0,m}}\lesssim \eps^{2}\mh \sum\limits_{n=0}^{N-1}(\abs{e_{\bz}^{n,m}}+\abs{e_{\bz}^{n+1,m}})+\eps^{3}\mh^{3}.
$
Inserting \eqref{equ-10-9-11} into the preceding inequality and dividing both sides by $1/\eps$, we get
\begin{equation}\label{equ-10-10-10}
\frac{1}{\eps}\abs{e_{\dot{\bz},\|}^{0,m+1}}-\frac{1}{\eps}\abs{e_{\dot{\bz},\|}^{0,m}}\lesssim \eps\abs{e_{\bz}^{0,m}}+\eps^{2}\mh^{2}, \quad 0\leq m<M.
\end{equation}
Finally, combining \eqref{equ-10-10-6} and \eqref{equ-10-10-10} yields
\begin{equation*}
\abs{e_{\bz}^{0,m+1}}+\frac{1}{\eps}\abs{e_{\dot{\bz},\|}^{0,m+1}}-\abs{e_{\bz}^{0,m}}-\frac{1}{\eps}\abs{e_{\dot{\bz},\|}^{0,m}}\lesssim
\eps(\abs{e_{\bz}^{0,m}}+\abs{e_{\bz}^{0,m+1}}+\frac{1}{\eps}\abs{e_{\dot{\bz}}^{0,m}})+\eps^{2}\mh^{2}+\eps N^{-m_{0}}, \ \ 0\leq m<M. 
\end{equation*}
By invoking Gronwall's inequality and noting the initial conditions $e_{\bz}^{0,0}=e_{\dot{\bz},\|}^{0,0}=0$, it follows that
\begin{equation*}
\abs{e_{\bz}^{0,m}}+\frac{1}{\eps}\abs{e_{\dot{\bz},\|}^{0,m}}\lesssim \eps\mh^{2}+N^{-m_{0}}, \quad 0\leq m \leq M.
\end{equation*}
The estimates for intermediate time grids including $e_{\bz}^{n,m}$ and $e_{\dot{\bz},\|}^{n,m}$ with $0<n<N$ follow directly from \eqref{equ-10-10-9} and \eqref{equ-10-9-11}, thereby yielding
\begin{equation*}
\abs{e_{\bz}^{n,m}}\lesssim \eps\mh^{2}+N^{-m_{0}}, \ \abs{e_{\dot{\bz},\|}^{n,m}}\lesssim \eps^{2}\mh^{2}+\eps N^{-m_{0}}, \quad 0\leq n<N,\ \ 0\leq m<M.
\end{equation*}
From Proposition \ref{pro-2} and \eqref{equ-10-9-11}, it can be derived that
$
\abs{e_{\dot{\bz}}^{n,m}}\lesssim \eps\mh^{2}+N^{-m_{0}}
$
, which concludes the proof.
\end{proof}
\begin{remark}
The relations $\dot{\bx}(t)=\frac{1}{\eps}\dot{\bz}(\tau)$ and $\dot{\bx}^{n}=\frac{1}{\eps}\dot{\bz}^{n}$, derived from $h=\eps\mh$ and \eqref{equ-10-4-3}, immediately yield Theorem \ref{theo-2}. Although the RS1-PIC and RS2-PIC frameworks can be extended to higher-order schemes, the corresponding error analysis is non-trivial. We therefore defer the development of such schemes to future work.
\end{remark}

\section{Applications to Vlasov-Poisson system under various scalings}\label{sec-4}

In this section, we apply the ER-PIC methods to Vlasov-Poisson system under various scalings and evaluate the accuracy and energy conservation of the proposed  two ER-PIC methods: RS1-PIC and RS2-PIC.

\subsection{Vlasov-Poisson equation in fluid scaling}
The two-dimensional Vlasov-Poisson system \eqref{vp} in fluid scaling under a strong magnetic field \eqref{mag-1} reduces to the following characteristic equation via the PIC method  
\begin{equation}\label{vp-2}
\dot{\bx}(t)=\bv(t), \ \dot{\bv}(t)=\dfrac{b(\bx)}{\eps}\bv^{\perp}(t)+\bE_{[\bx^{p}(t)]}(\bx(t)), \
\bx(0)=\bx_{0}, \ \bv(0)=\bv_{0}, \ \bx=(x_{1},x_{2})^{\intercal}, \ t>0,
\end{equation}
where $\bv^{\perp}=J\bv=(v_{2},-v_{1})^{\intercal}$ with $J=\begin{pmatrix}  0 & 1 \\ -1 & 0 \end{pmatrix}$, and $b(\bx)=b_{0}+\eps b_{1}(\bx)\geq 0$ with $b_{0}$ is a constant. For the PIC method, we project the particles onto a uniform spatial grid using quintic B-splines. The reference solution is obtained by using a fourth order Runge-Kutta method with step size $h=10^{-6}$.
\begin{figure}[t!]
$$\begin{array}{cc}
\psfig{figure=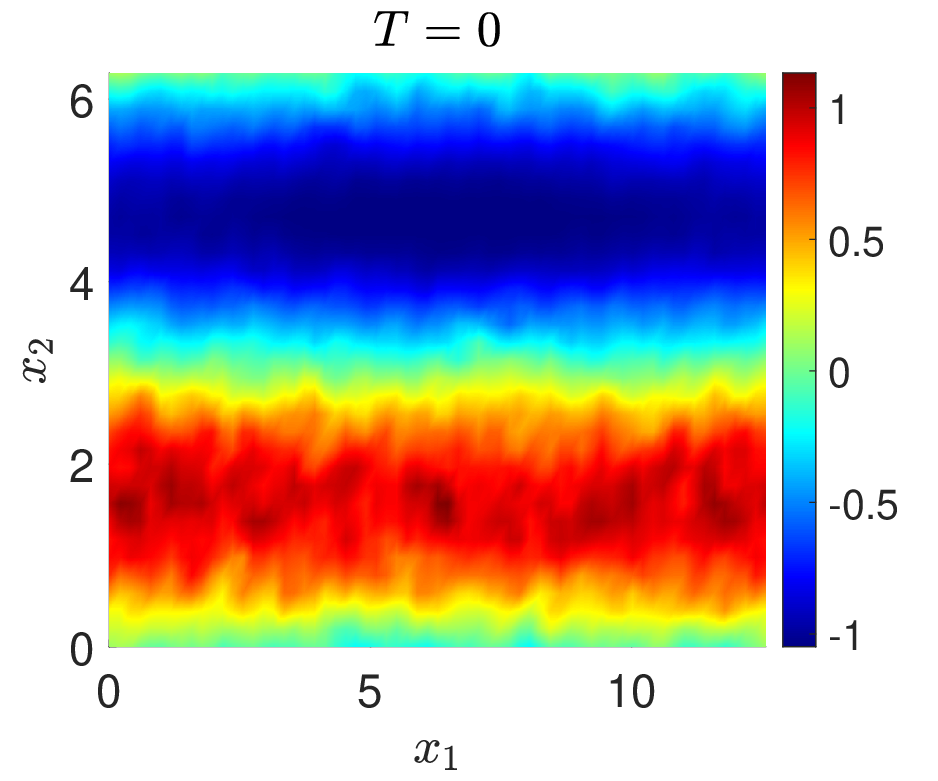,height=3.8cm,width=4.8cm}
\psfig{figure=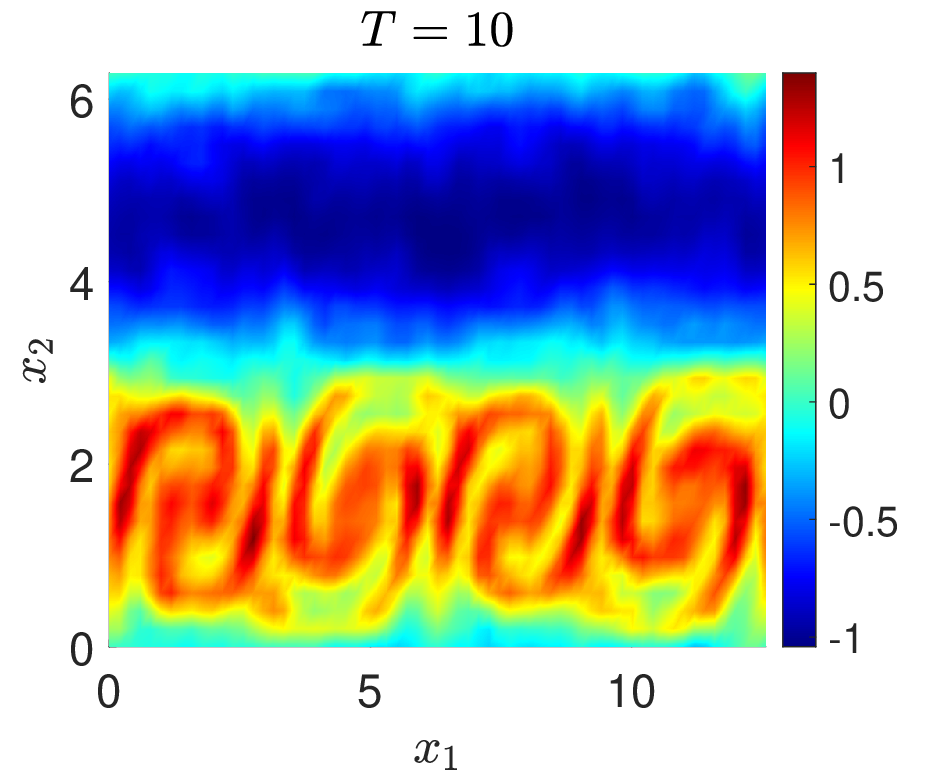,height=3.8cm,width=4.8cm}
\psfig{figure=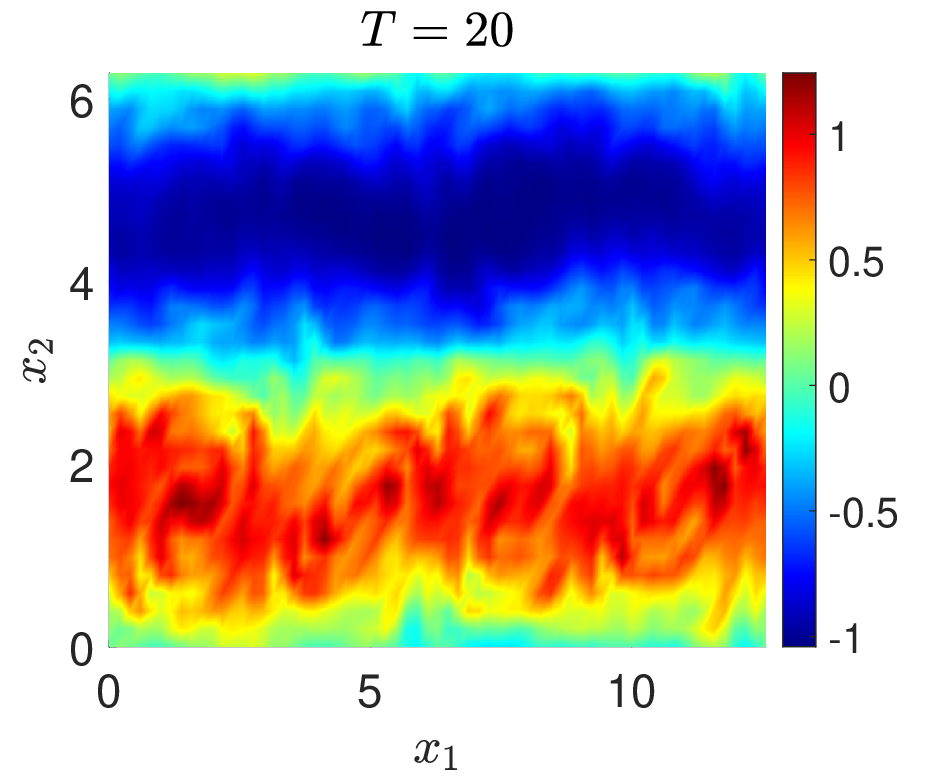,height=3.8cm,width=4.8cm}
\end{array}$$
\caption{Example 1. Contour plot of quantity $\rho(t,\bx)-n_{i}$ at different $t$ with $\eps=0.005$ for RS2-PIC.}\label{fig-1-5}
\end{figure}

\begin{figure}[t!]
$$\begin{array}{cc}
\psfig{figure=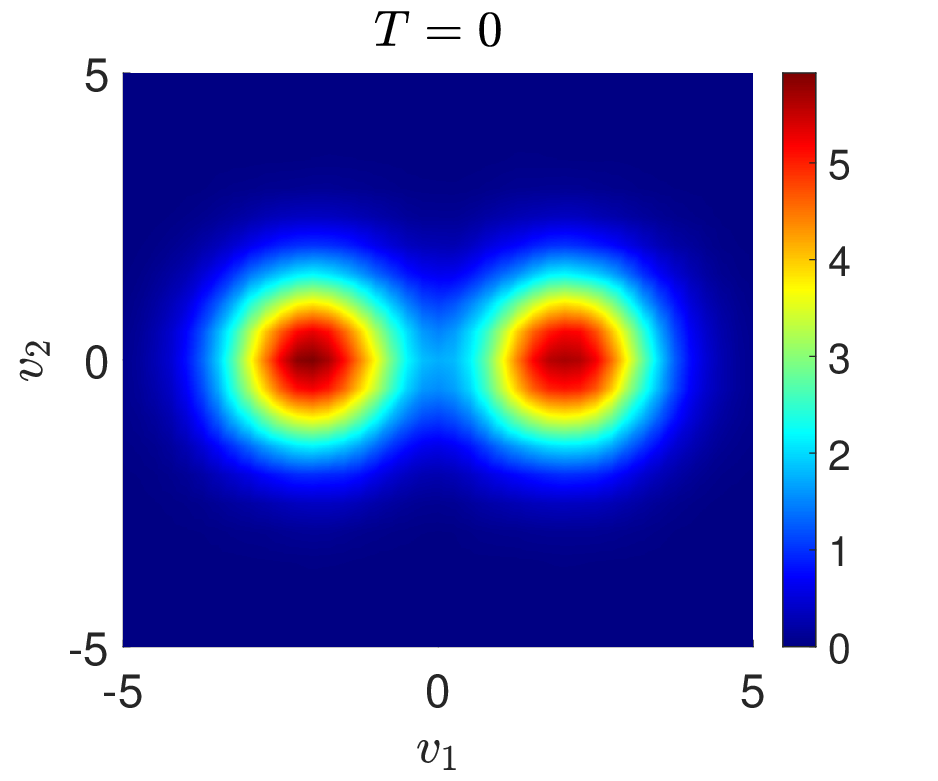,height=3.8cm,width=4.8cm}
\psfig{figure=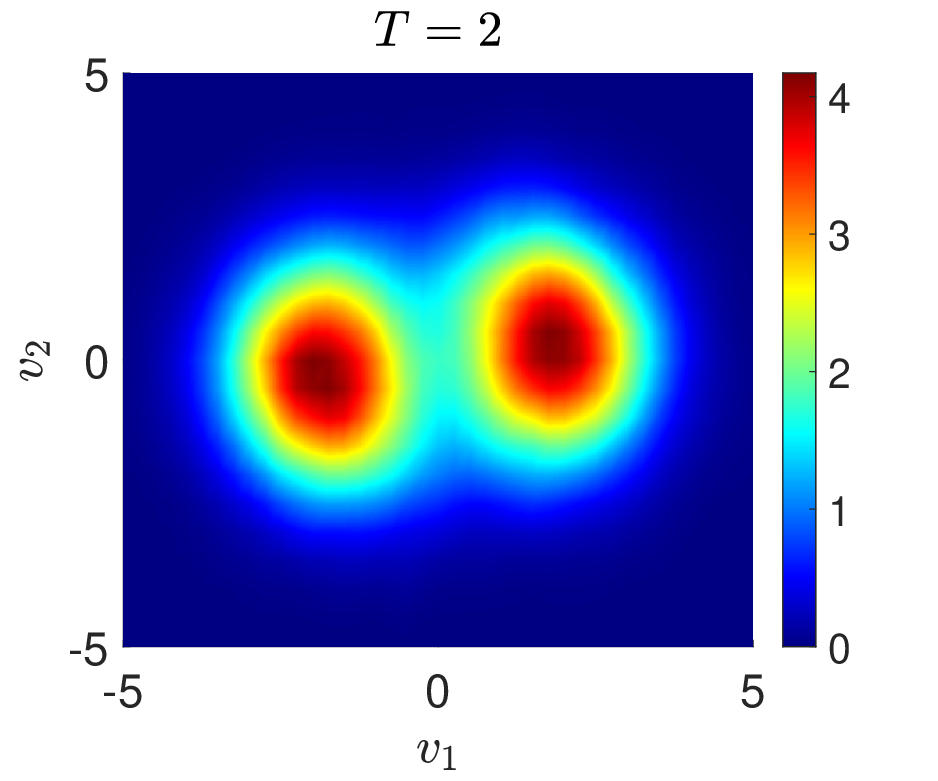,height=3.8cm,width=4.8cm}
\psfig{figure=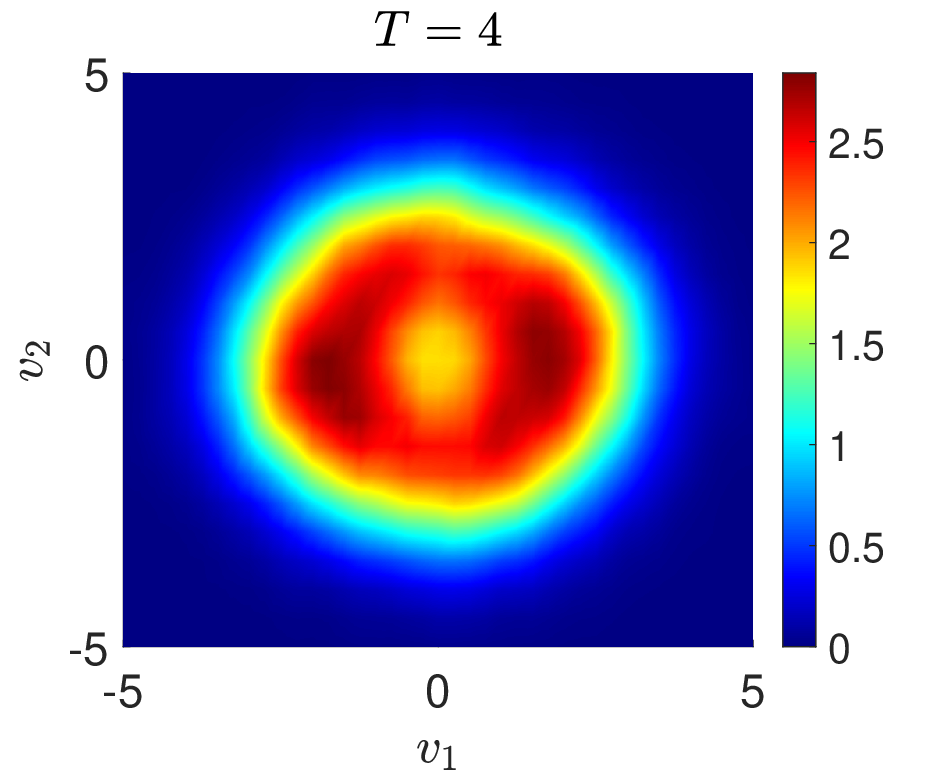,height=3.8cm,width=4.8cm} \\
\psfig{figure=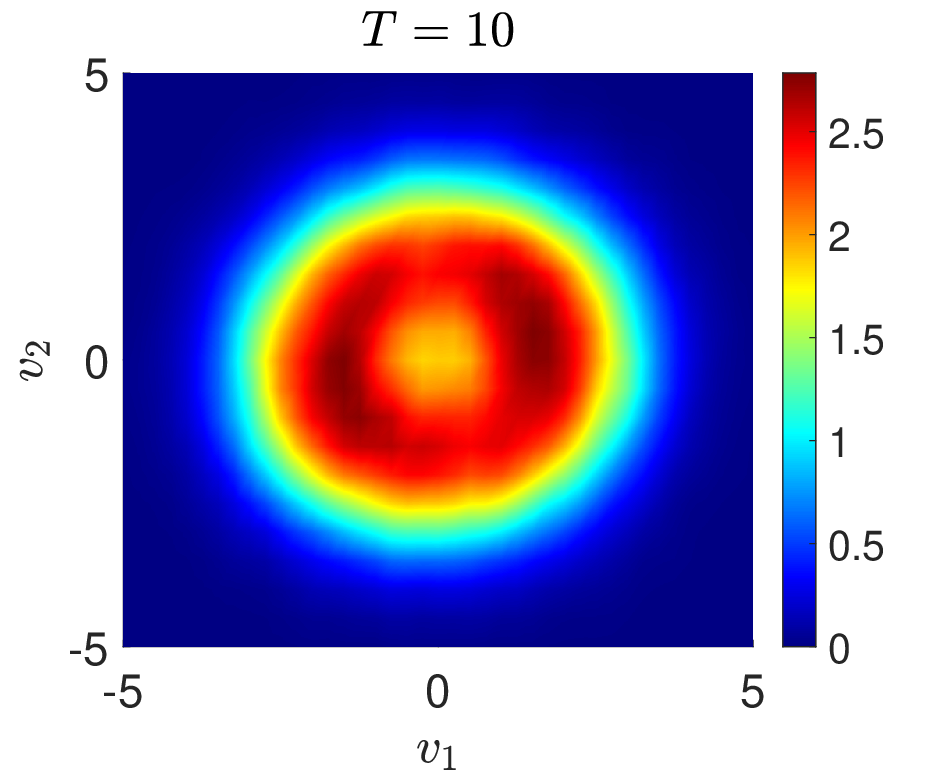,height=3.8cm,width=4.8cm}
\psfig{figure=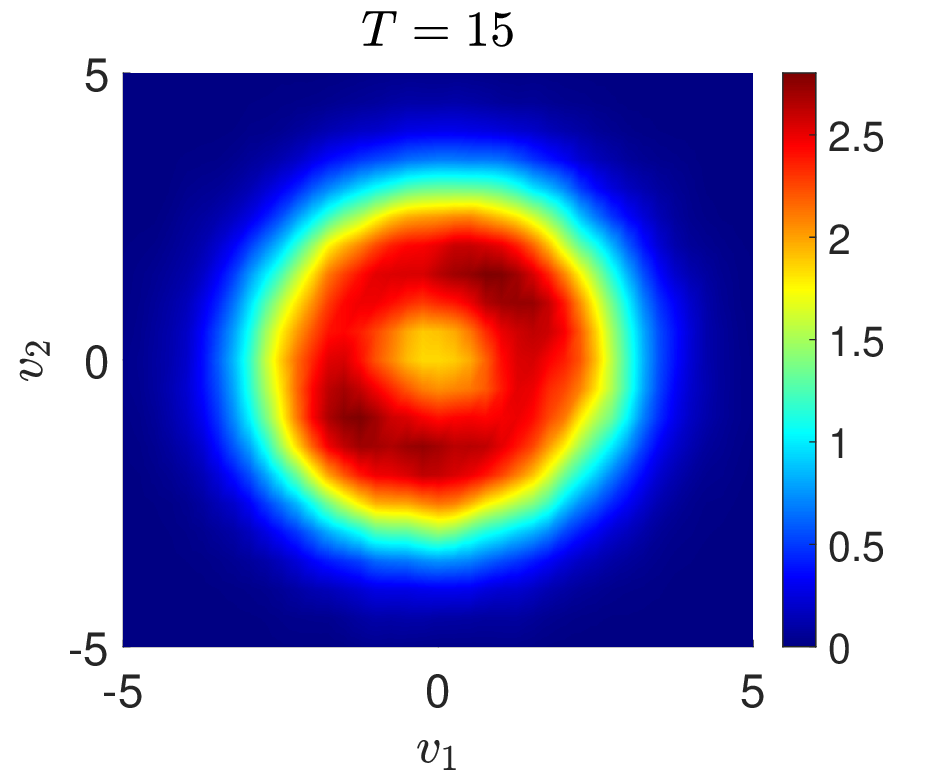,height=3.8cm,width=4.8cm}
\psfig{figure=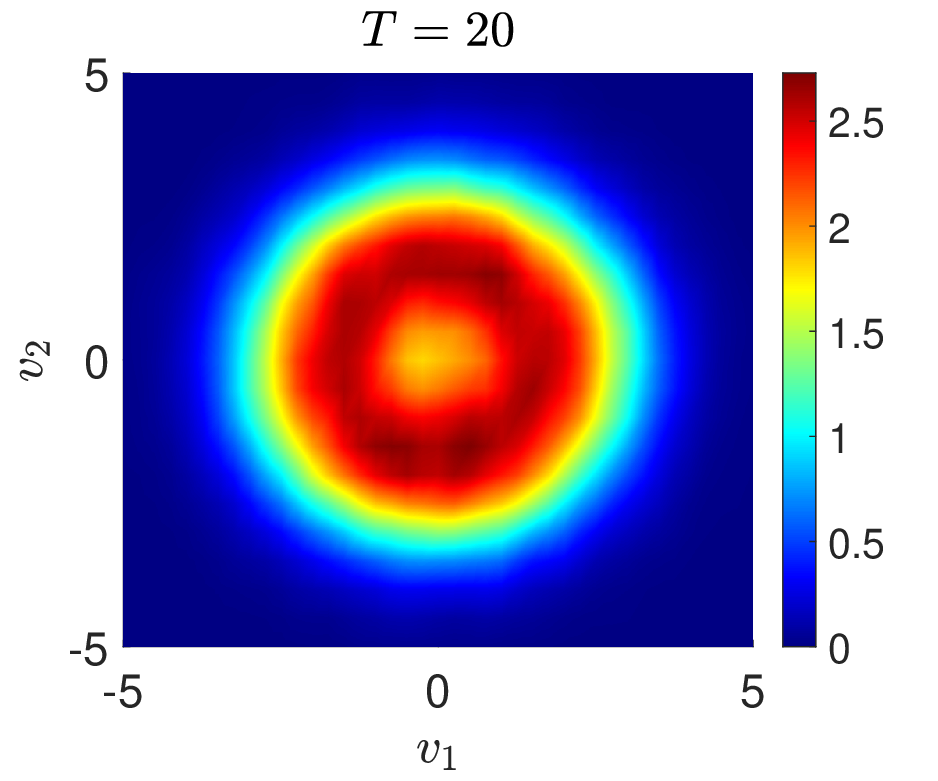,height=3.8cm,width=4.8cm}
\end{array}$$
\caption{Example 1. Contour plot of quantity $\chi(t,\bv)$ at different $t$ with $\eps=0.005$ for RS2-PIC.}\label{fig-1-6}
\end{figure}
\textbf{Example 1.}
We first consider the Vlasov-Poisson system \eqref{vp} with the following initial data
\begin{equation}\label{ini-1}
f_{0}(\bx,\bv)=\frac{1}{4\pi}(1+\sin(x_{2})+\eta\cos(kx_{1}))\left(\fe^{-\frac{(v_{1}+2)^{2}+v_{2}^{2}}{2}}+\fe^{-\frac{(v_{1}-2)^{2}+v_{2}^{2}}{2}}\right),
\end{equation}
and the strong magnetic field \eqref{equ-9-28-2} is given as
\begin{equation}\label{mag-1}
\bB(\bx)=(0,0,1+\eps b_{1}(\bx))^{\intercal}
\end{equation}
with the non-homogeneous field $b_{1}(\bx)=1+\sin(x_{1})\sin(x_{2})/2$.  
The spatial domain is given by $\bx\in\Omega=[0,2\pi/k]\times[0,2\pi]$ for parameters $k,\eta>0$, with $\eta=0.05$ and $k=0.5$ chosen in our simulations. The domain $\Omega$ is discretized using 64 points in the $x_1$-direction and 32 points in the $x_2$-direction, and the total number of particles is 102400. To test the accuracy, we consider the following two quantities
\begin{equation*}
\rho(t,\bx)=\int_{\mathbb{R}^{2}} f(t,\bx,\bv)d\bv, \quad \rho_{\bv}(t,\bx)=\int_{\mathbb{R}^{2}} \abs{\bv}^{2}f(t,\bx,\bv)d\bv, \quad \bx\in\Omega.
\end{equation*}

We first study the space profiles $\rho(t,\bx)-n_{i}$ and velocity profiles $\chi(t,\bv)=\int_{\Omega}f(t,\bx,\bv)d\bx$ with $\eps=0.005$ at different times. The profiles of $\rho(t,\bx)-n_{i}$ is plotted as a function of $\bx$ at different times with $h=0.1$ in Figure \ref{fig-1-5}. The dynamics of $\chi(t,\bv)$ is shown in Figure \ref{fig-1-6}. The velocity distribution $\chi$ evolves from an initial rotation of its two bumps to their subsequent merger around $t \approx 4$, ultimately reaching an isotropic state.

In Tables \ref{tab-1-1}-\ref{tab-1-2} and Figure \ref{fig-1-1}, we   compute the relative errors in time of the RS1-PIC and RS2-PIC regarding both $\rho$ and $\rho_{\bv}$ in maximum norm, i.e., $err_{\rho}+err_{\rho_{\bv}}=\frac{\norm{\rho^n(\cdot)-\rho(t_{n},\cdot)}_{l^{\infty}}}{\norm{\rho(t_{n},\cdot)}_{l^{\infty}}}+\frac{\norm{\rho_{\bv}^n(\cdot)-\rho_{\bv}(t_{n},\cdot)}_{l^{\infty}}}
{\norm{\rho_{\bv}(t_{n},\cdot)}_{l^{\infty}}}$. Numerical results indicate that for the $\rho(t,\bx)$ and $\rho_{\bv}(t,\bx)$, RS1-PIC admits a uniform first-order error bound that is independent of the $\eps$, whereas RS2-PIC scheme achieves second-order accuracy $ \mathcal{O}(h^2/\eps)$. These observations support the theoretical results of Theorem \ref{theo-2}. 

 Then, we evaluate the relative error of the total energy $
err_{H}=\abs{H(t_{n})-H(0)}/H(0)$ 
of the two methods. The energy error of the RS1-PIC and RS2-PIC schemes are presented until $T=100$ with step size $h=0.1$ and different $\eps$ in Figure \ref{fig-1-3}. It can be observed that both the RS1-PIC and RS2-PIC schemes can precisely conserve energy well. Although at a small number of steps, the energy exhibits long-term approximate conservation at the level of $10^{-6}\sim 10^{-5}$. This is a consequence of the occurrence of $\mathcal{C}^{2}-2\mathcal{A}\widetilde{H}<0$, which results directly from our choice of $\gamma=0$ in \eqref{equ-10-1-6}.  In addition, we also draw the evolutions of relaxation coefficients $\gamma_{n}$ in Figure \ref{fig-1-7} with step size $h=0.02$,
which demonstrates that the order of $\gamma_{n}$ is independent of $\eps$.

\begin{table}[htbp]
\centering
\caption{Example 1. Time error of RS1-PIC for different $\eps$ and $h$.}
\begin{tabular}{ccccccc}
\toprule
$err_{\rho}+err_{\rho_{\bv}} (t=1)$      &  $h_{0}=1/16$ &   $h_{0}/2$    &  $h_{0}/2^{2}$  &  $h_{0}/2^{3}$ &  $h_{0}/2^{4}$ &  $h_{0}/2^{5}$ \\
\midrule
$\eps_{0}=1$      &   1.04E-1  &  5.62E-2  &  2.91E-2  &  1.52E-2  &  8.21E-3  &  4.86E-3   \\
order             &  -         &   0.88    &   0.95    &   0.94    &   0.89    &   0.76      \\
$\eps_{0}/2^{2}$  &   6.91E-2  &  3.60E-2  &  1.82E-2  &  9.09E-3  &  4.56E-3  &  2.33E-3   \\
order             &  -         &   0.94    &   0.99    &   1.00    &   1.00    &   0.97       \\
$\eps_{0}/2^{4}$  &  7.35E-2   &  3.97E-2  &  2.03E-2  &  1.02E-2  &  5.13E-3  &  2.57E-3  \\
order             &  -         &   0.89    &   0.97    &   0.99    &   1.00    &   1.00        \\
$\eps_{0}/2^{6}$  &   9.72E-2  &  4.36E-2  &  2.12E-2  &  1.05E-2  &  5.25E-3  &  2.62E-3     \\
order             &  -         &   1.16    &   1.04    &   1.01    &   1.00    &    1.00        \\
$\eps_{0}/2^{8}$  &   3.33E-2  &  1.91E-2  &  1.13E-2  &  4.22E-3  &  2.00E-3  &  9.86E-4    \\
order             &  -         &   0.81    &   0.76    &   1.42    &   1.08    &   1.02         \\
\bottomrule
\end{tabular}
\label{tab-1-1}
\end{table}

\begin{figure}[t!]
$$\begin{array}{cc}
\psfig{figure=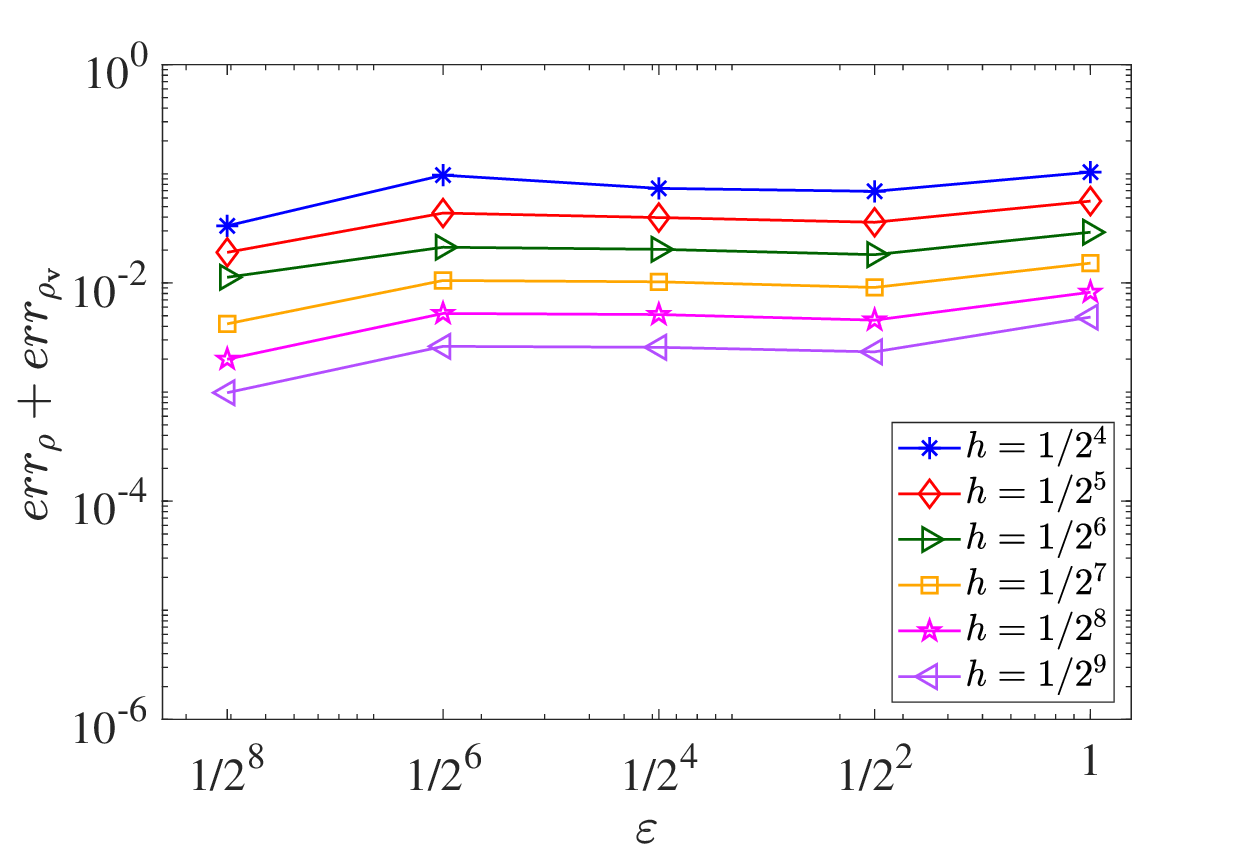,height=3.5cm,width=6.5cm}
\psfig{figure=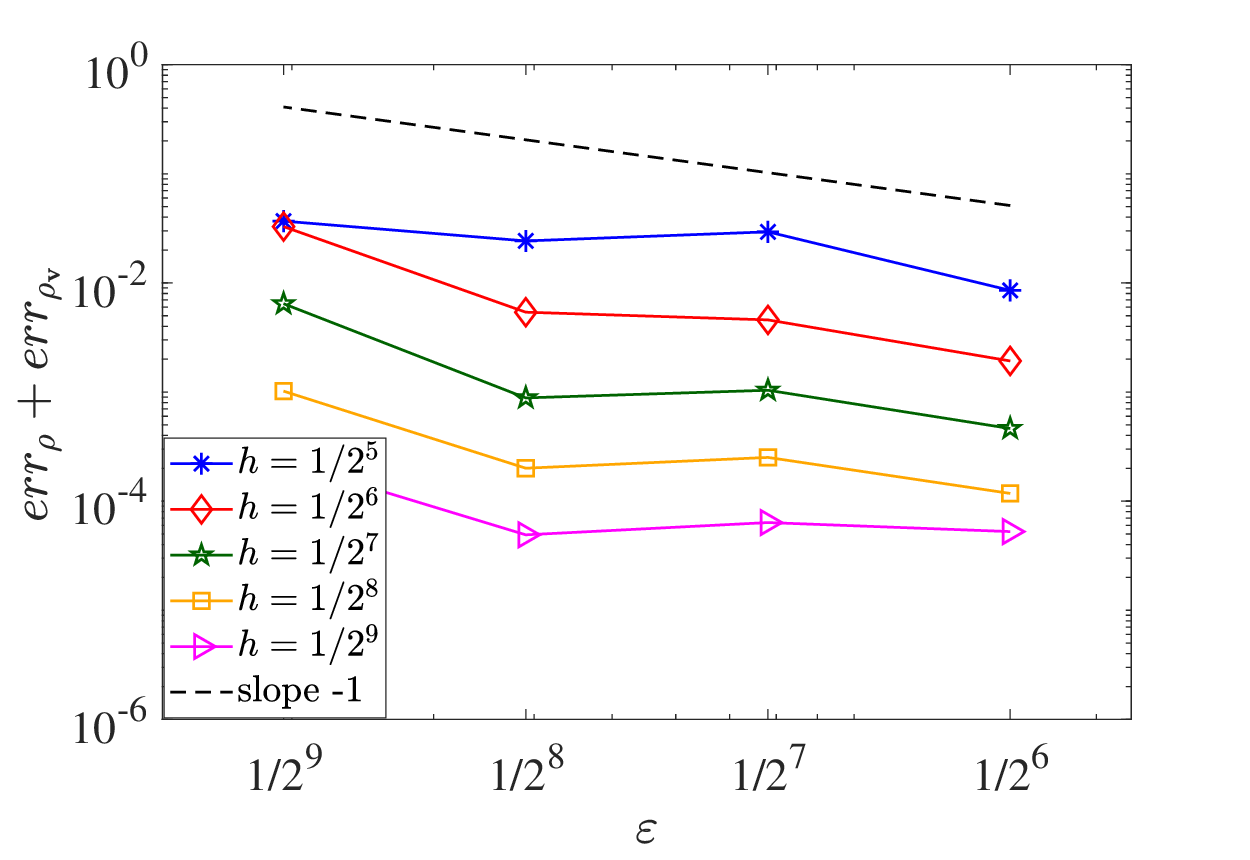,height=3.5cm,width=6.5cm}
\end{array}$$
\caption{Example 1. Errors of RS1-PIC (left) and RS2-PIC (right) with respect to $\eps$ under different $h$ about $\rho$ and $\rho_{\mathbf{v}}$.}\label{fig-1-1}
\end{figure}

\begin{table}[htbp]
\centering
\caption{Example 1. Time error of RS2-PIC for different $\eps$ and $h$.}
\begin{tabular}{cccccc}
\toprule
$err_{\rho}+err_{\rho_{\bv}} (t=1)$        &  $h=1/2^{5}$   &   $h=1/2^{6}$    &   $h=1/2^{7}$  &  $h=1/2^{8}$  &  $h=1/2^{9}$  \\
\midrule
$\eps=1/2^{6}$      &   8.54E-3   &  1.93E-3   &   4.63E-4   &   1.18E-4   &   5.26E-5   \\
order               &  -          &   2.15     &   2.06      &    1.98     &   1.16      \\
$\eps=1/2^{7}$      &   2.94E-2   &  4.58E-3   &   1.04E-3   &   2.52E-4   &   6.36E-5   \\
order               &  -          &   2.68     &    2.14     &    2.05     &    1.98        \\
$\eps=1/2^{8}$      &   2.42E-2   &  5.39E-3   &   8.84E-4   &   2.00E-4   &   4.91E-5    \\
order               &  -          &   2.17     &     2.61    &    2.14     &    2.03          \\
$\eps=1/2^{9}$      &   3.69E-2   &  3.28E-2   &   6.45E-3   &   1.02E-3   &   2.32E-4  \\
order               &  -          &   0.17     &    2.35     &    2.66     &   2.14       \\
\bottomrule
\end{tabular}
\label{tab-1-2}
\end{table}

\begin{figure}[t!]
$$\begin{array}{cc}
\psfig{figure=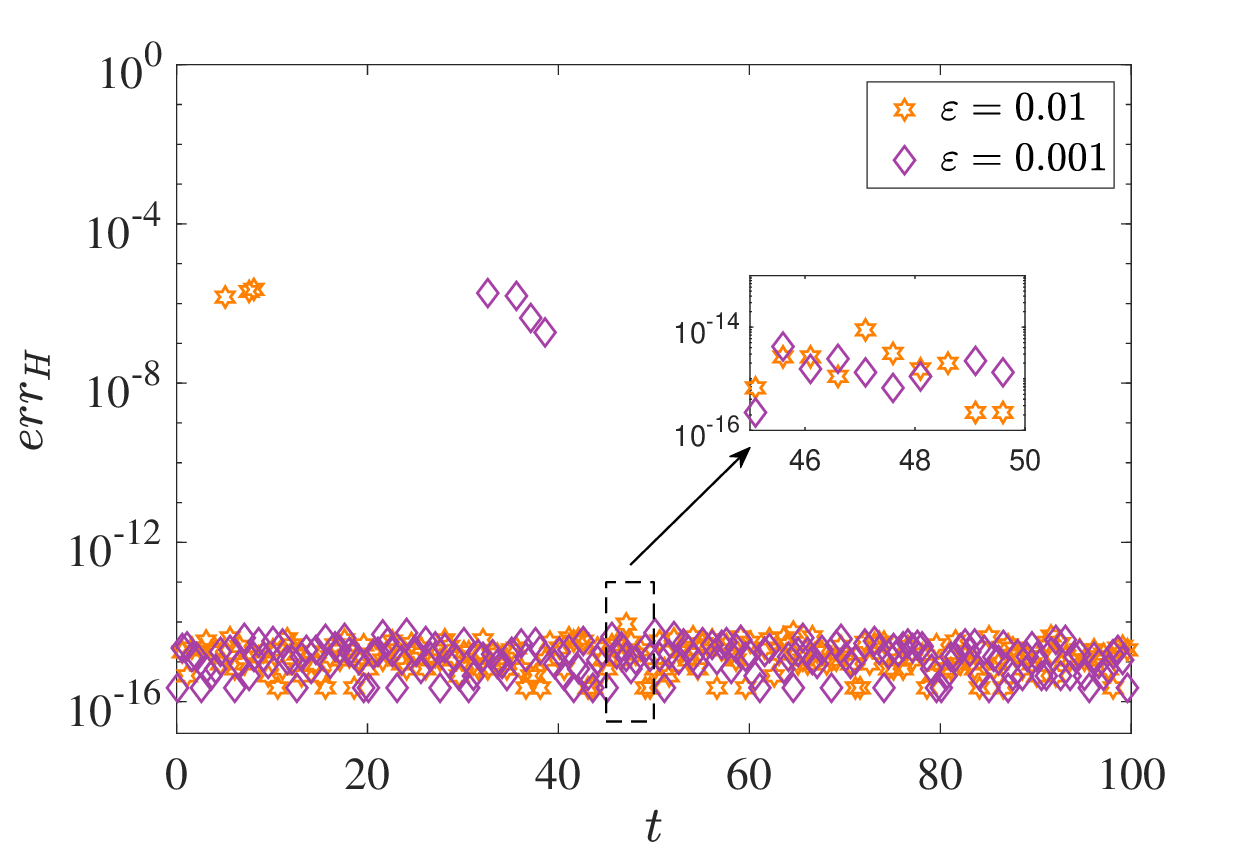,height=3.5cm,width=6.5cm}
\psfig{figure=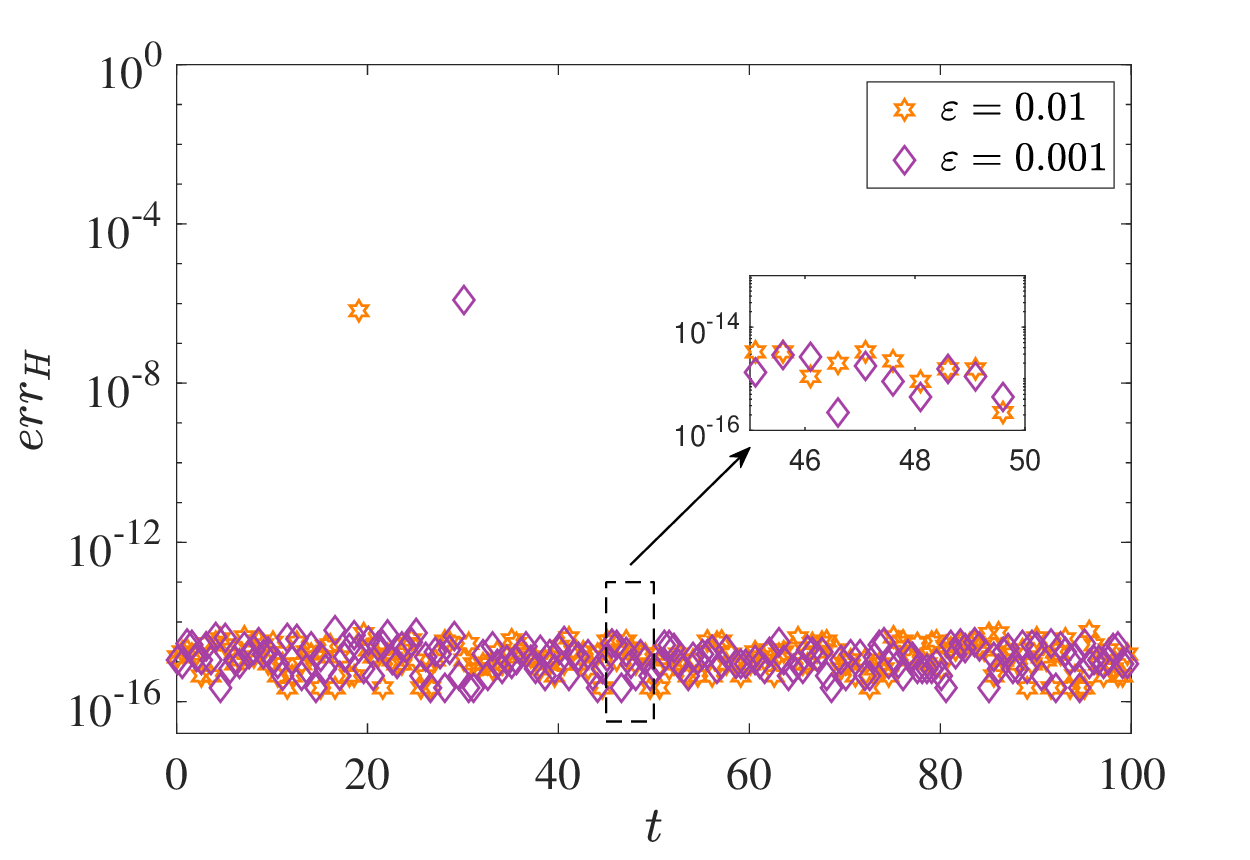,height=3.5cm,width=6.5cm}
\end{array}$$
\caption{Example 1. Energy error of RS1-PIC (left) and RS2-PIC (right) with step size $h=0.1$ until $T=100$.}\label{fig-1-3}
\end{figure}

\begin{figure}[t!]
$$\begin{array}{cc}
\psfig{figure=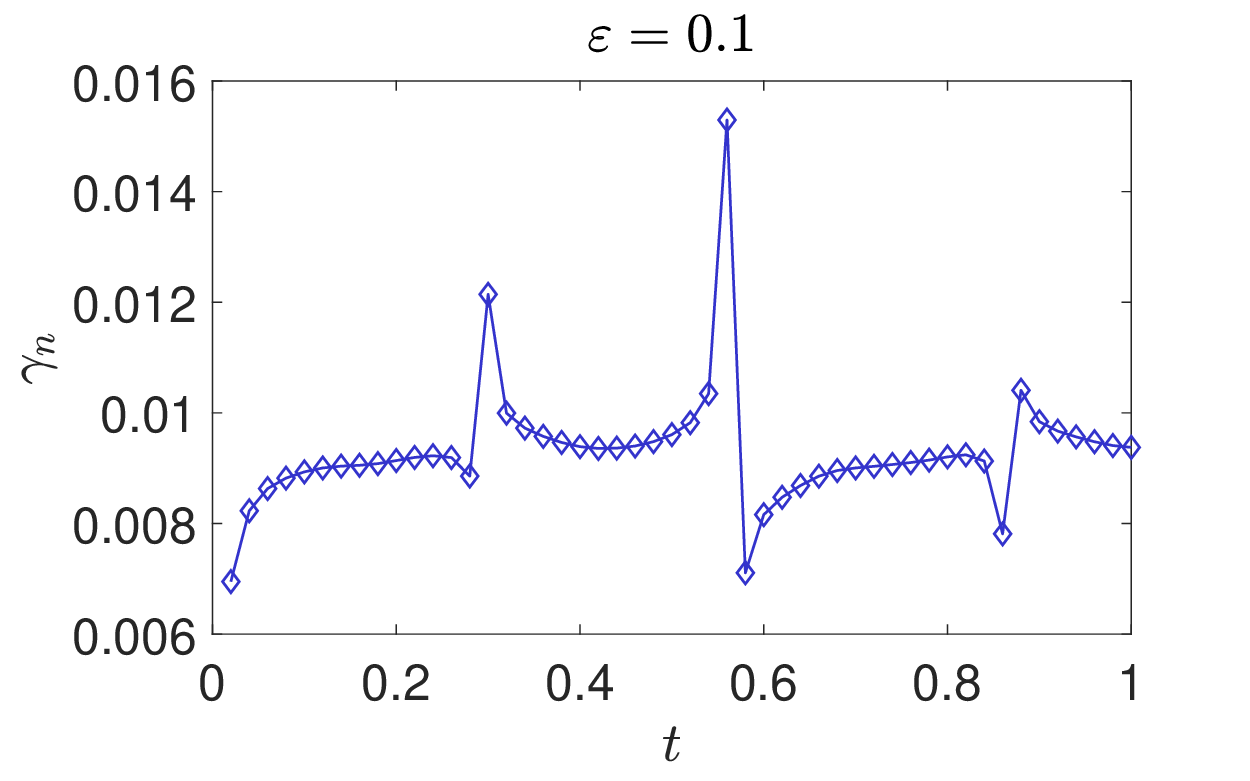,height=3.5cm,width=5cm}
\psfig{figure=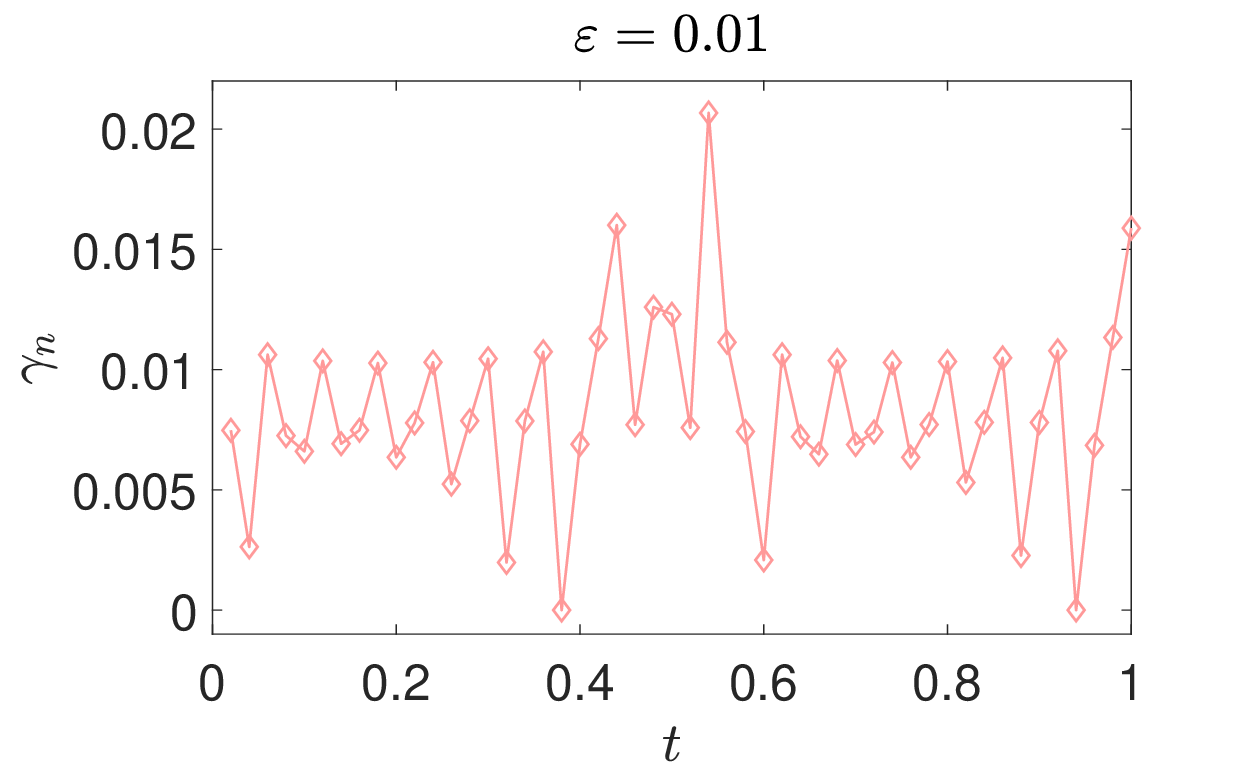,height=3.5cm,width=5cm}
\psfig{figure=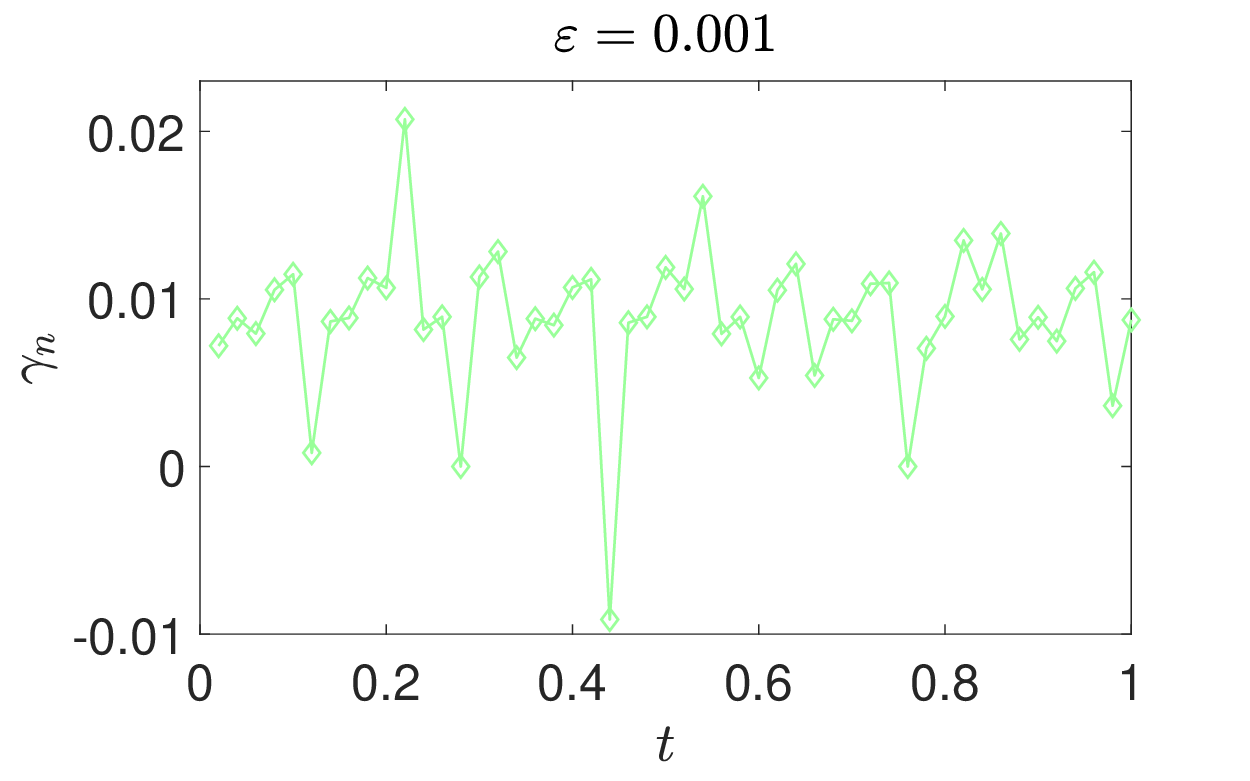,height=3.5cm,width=5cm}
\end{array}$$
\caption{Example 1. Evolution of relaxation parameter $\gamma_{n}$ by using RS2-PIC with $h=0.02$ .}\label{fig-1-7}
\end{figure}

\textbf{Example 2.} (Diocotron instability) The diocotron instability arises in plasmas from the relative drift between adjacent charged sheets. Typically described by the guiding centre model, this phenomenon has been widely studied in numerical simulations. We take the initial distribution function as (\cite{FR1}): $f_{0}(\bx,\bv)=\frac{d_{0}}{2\pi}\exp(-\frac{\norm{\bv}^{2}}{2}), \bx=(x_{1},x_{2})^{\intercal}\in \mathbb{R}^{2}$, where the initial density is 
$d_{0}(\bx)= 
(1+\alpha \cos(l\theta))\exp(-4(\norm{\bx}-6.5)^{2}),  \mbox{if} \ r^{-}\leq \norm{\bx} \leq r^{+},$ and
0 otherwise. Here $\theta=\arctan(x_{2}/x_{1})$ and $l$ the number of vortices. We choose a strong magnetic field $\frac{1}{\eps}\bB(\bx)=\frac{1}{\eps}(0, 0, 1)^{\intercal}$. The simulation adopts the parameters $r^{-}=5$, $r^{+}=8$ and $\alpha=0.2$. The computational domain is set to $[-12, 12]\times [-12, 12]$. A uniform initial grid of $N_{x_{1}}=N_{x_{2}}=128$ is used, with 50 particles per cell, corresponding to a total of $819200$ particles in the simulation.
Figure \ref{fig-2-1} presents the results for $\eps=0.01$, simulated using the RS2-PIC scheme with $h=0.01$ and $l=5$. It is observed that the five vortex structures are well captured. The other performance is similar to Example 1 and is not displayed for brevity.

\begin{figure}[t!]
$$\begin{array}{cc}
\psfig{figure=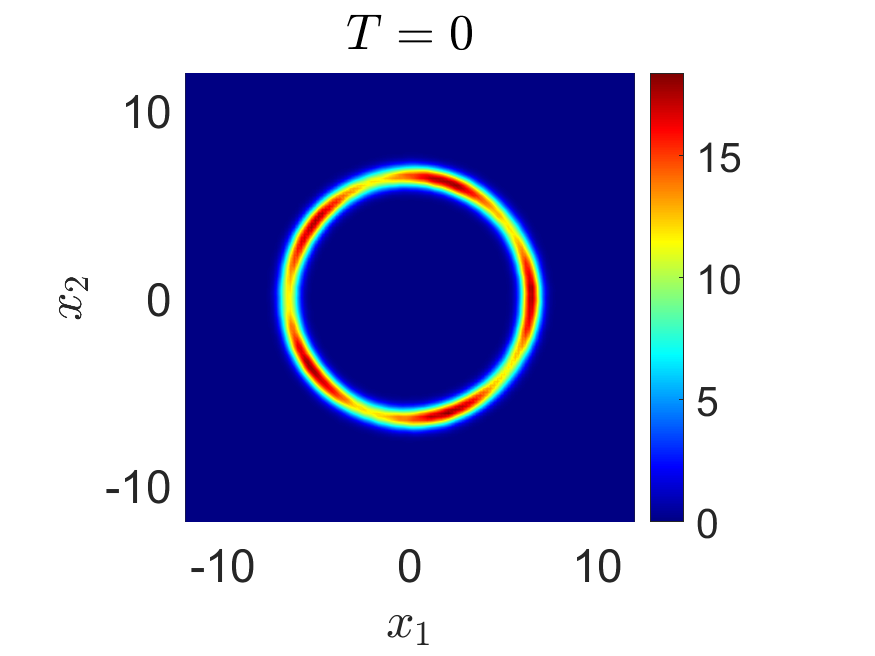,height=3.8cm,width=5cm}
\psfig{figure=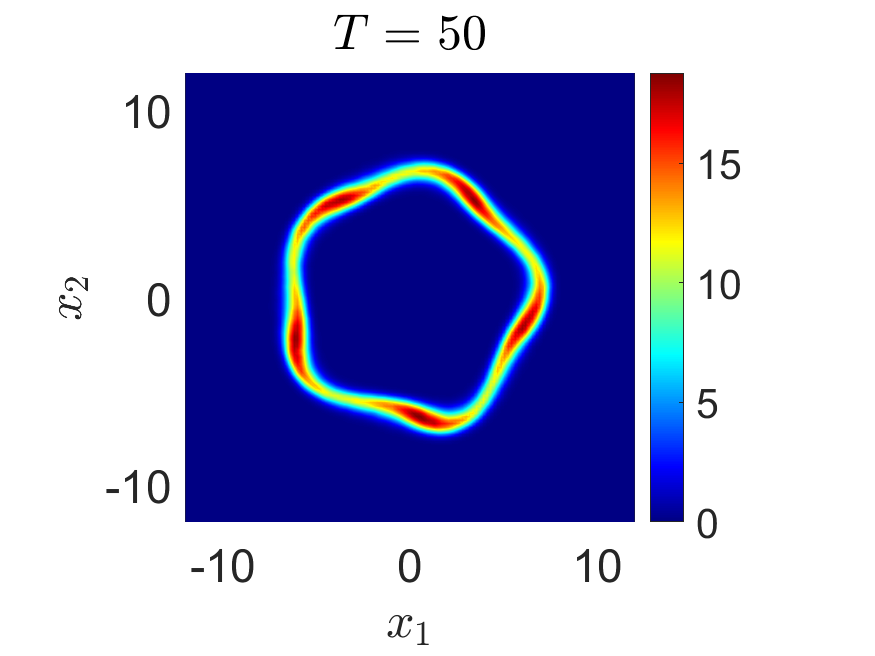,height=3.8cm,width=5cm}
\psfig{figure=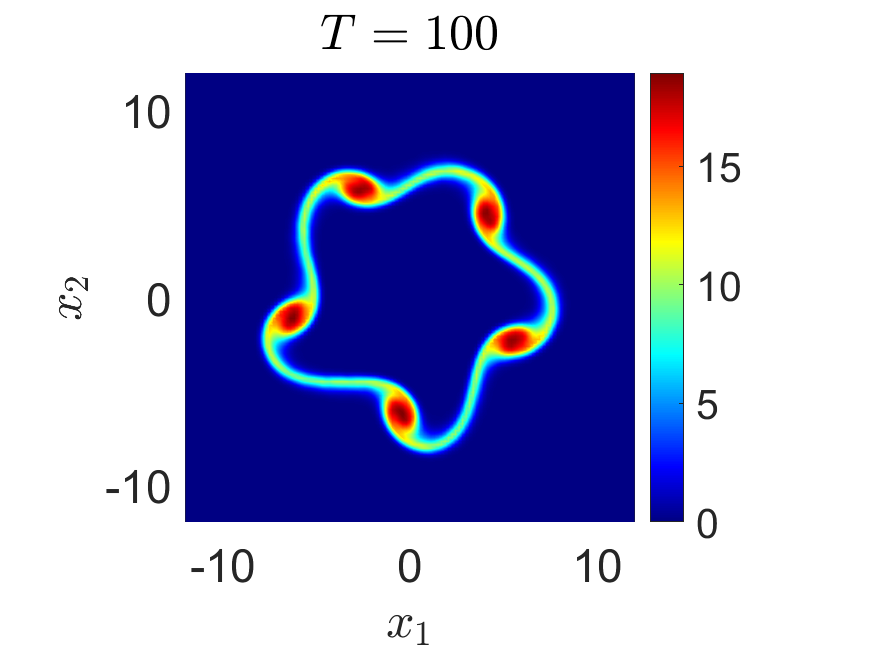,height=3.8cm,width=5cm}
\end{array}$$
\caption{Example 2. Time evolution of the density $\rho(t,\bx)$ until $T=100$ with strength of magnetic field $\eps=0.01$.}\label{fig-2-1}
\end{figure}

\subsection{Vlasov-Poisson equation in Larmor scaling}
In this subsection, we consider the Vlasov-Poisson equation in the finite Larmor radius approximation regime (\cite{FS,CCZ}):
\begin{subequations}\label{vp-larmor}
\begin{align}
&\partial_{t}f(t,\bx,\bv)+\frac{\bv}{\eps}\cdot\nabla_{\bx}f(t,\bx,\bv)+\left(\bE(t,\bx)+\frac{1}{\eps}\bv\times\bB(\bx)\right)\cdot\nabla_{\bv}f(t,\bx,\bv)=0, \label{vp-larmor-a} \\
&\nabla_{\bx}\cdot\bE(t,\bx)=\int_{\mathbb{R}^d}f(t,\bx,\bv)d\bv-n_{i}, \ f(0,\bx,\bv)=f_{0}(\bx,\bv), \ (\bx,\bv)\in \Omega\times\mathbb{R}^{d}, \ d\geq 2, \label{vp-larmor-b}  
\end{align}
\end{subequations}
with the energy conservation law
\begin{equation}\label{vp-lar-ener}
H_{lar}(t)=\frac{1}{2}\int_{\mathbb{R}^d}\int_{\Omega}\abs{\bv}^{2}f(t,\bx,\bv)d\bx d \bv+\frac{\eps}{2}\int_{\Omega}\abs{\bE(t,\bx)}^{2}d\bx=H_{lar}(0).
\end{equation}
Using the PIC discretization, the characteristic equation of \eqref{vp-larmor} is given as
\begin{equation}\label{char-larmor}
\dot{\bx}(t)=\dfrac{\bv(t)}{\eps}, \ \dot{\bv}(t)=\dfrac{1}{\eps}\bv(t)\times \bB(\bx(t))+\bE_{[\bx^{p}(t)]}(\bx(t)), \ 
\bx(0)=\bx_{0}, \ \bv(0)=\bv_{0}, \ 0\leq t\leq T.
\end{equation}
In order to ensure that the relaxation factor $\gamma_{n}$ at each step is not affected by $\eps$, we introduce a time-scale transformation $\tau=t/\eps$, the system \eqref{char-larmor} is transformed into the following second-order equation
\begin{equation}\label{lar-2}
\ddot{\bx}(\tau)=\dot{\bx}(\tau)\times \bB(\bx(\tau))+\eps\bE(\bx(\tau)), \ \bx(0)=\bx_{0}, \ \dot{\bx}(0)=\bv_{0}, \quad \tau\in [0,T/\eps].
\end{equation}
To maintain the simplicity of the presentation, we still use the original notation in the rescaled system \eqref{lar-2}.
Similarly, two energy-conserving PIC methods can be obtained for this system, with error bounds of $\mathcal{O}(\triangle\tau)$ and $\mathcal{O}(\triangle\tau^2)$, respectively.


\textbf{Example 3}. 
We next conduct numerical simulations over the extended interval $\tau\in [0,T/\eps]$. The simulation setup mirrors that of Example 1, employing the same magnetic field \eqref{mag-1}, initial conditions, and total particle count for the PIC method. A highly accurate solution, computed using a fourth-order Runge-Kutta method with a time step of $10^{-5}$, serves as the reference.  To illustrate the solution dynamics, Figures \ref{fig-3-4} and \ref{fig-3-5} present the evolution of $\chi(t,\bv)$ and $\rho(t,\bx)-n_{i}$, respectively, computed with a time step of $\triangle\tau=0.1$. We then assess the convergence of the proposed schemes for $\rho$ and $\rho_{\mathbf{v}}$ at a final time of $T=1$ ($\tau=T/\eps$). Table \ref{tab-3-1} and Figure \ref{fig-3-2} confirm that, over long times, the RS1-PIC and RS2-PIC schemes exhibit first- and second-order convergence, respectively. Lastly, we examine the long-term energy conservation up to $T=100$. Figure \ref{fig-3-3} reveals that both methods maintain exceptional energy accuracy over this interval.
\begin{figure}[t!]
$$\begin{array}{cc}
\psfig{figure=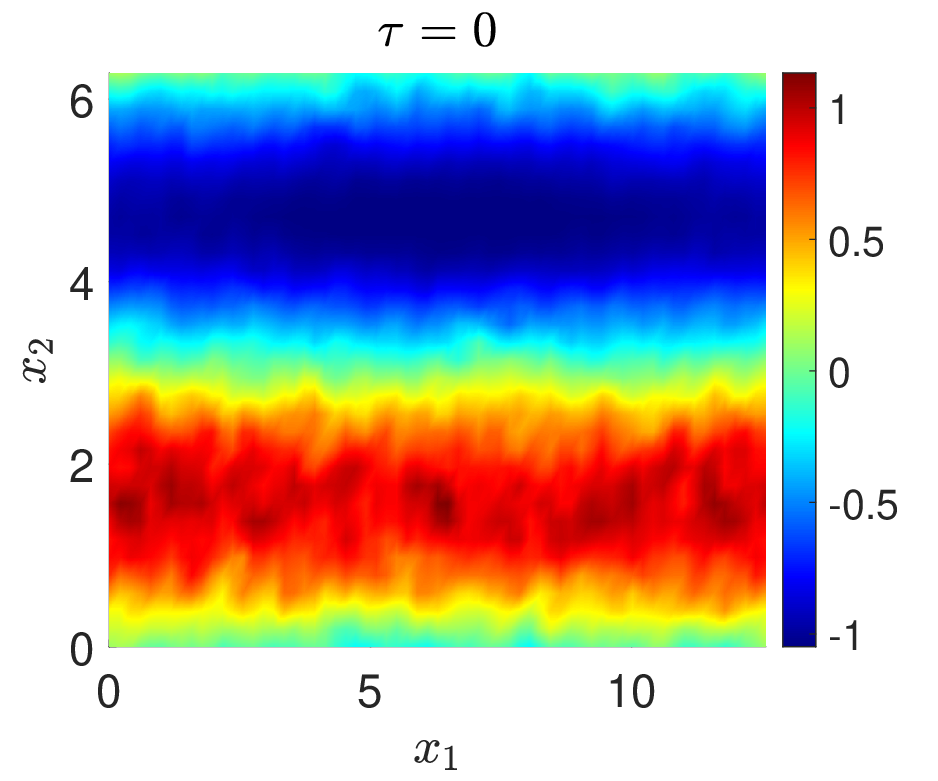,height=3.8cm,width=4.8cm}
\psfig{figure=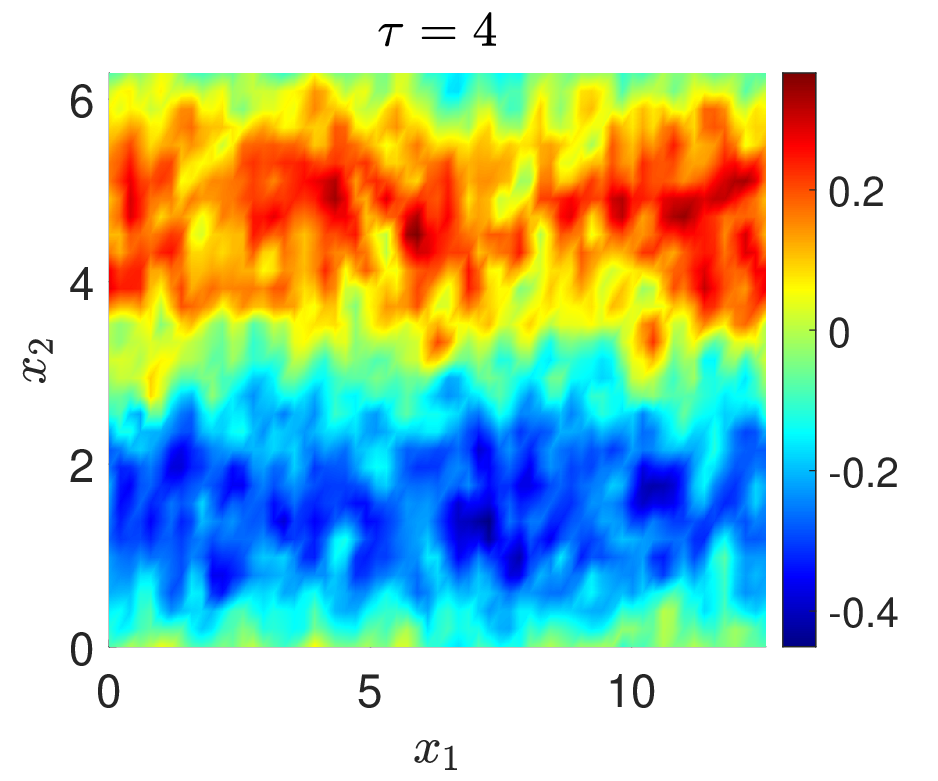,height=3.8cm,width=4.8cm}
\psfig{figure=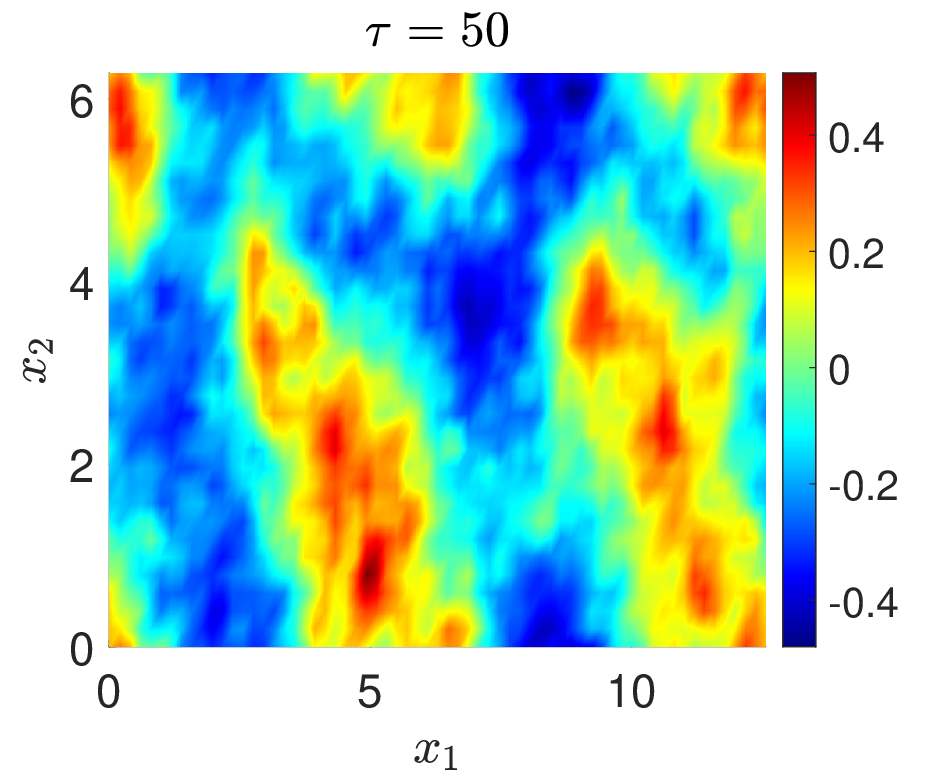,height=3.8cm,width=4.8cm}
\end{array}$$
\caption{Example 3. Contour plot of quantity $\rho(t,\bx)-n_{i}$ at different $\tau$ with $\eps=0.1$ for RS2-PIC.}\label{fig-3-4}
\end{figure}

\begin{figure}[t!]
$$\begin{array}{cc}
\psfig{figure=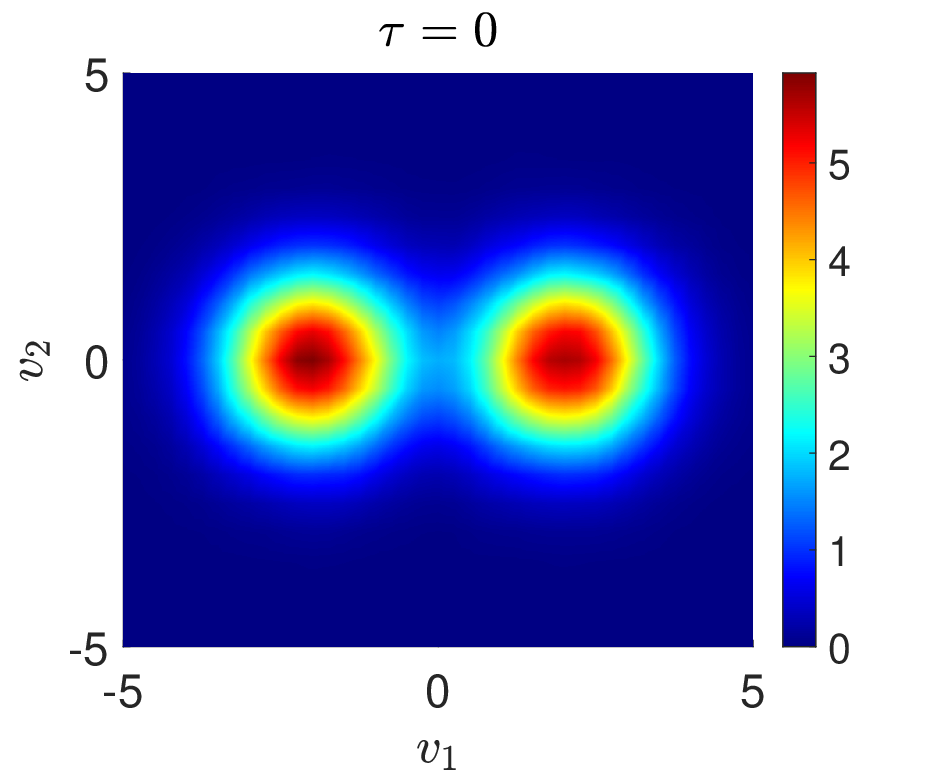,height=3.8cm,width=4.8cm}
\psfig{figure=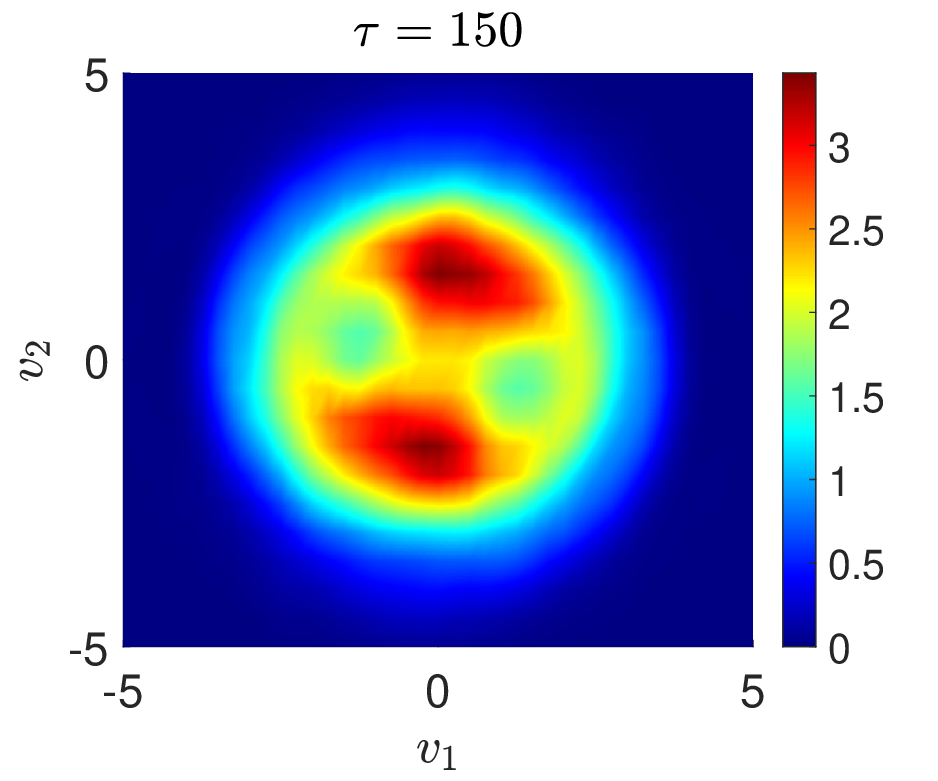,height=3.8cm,width=4.8cm}
\psfig{figure=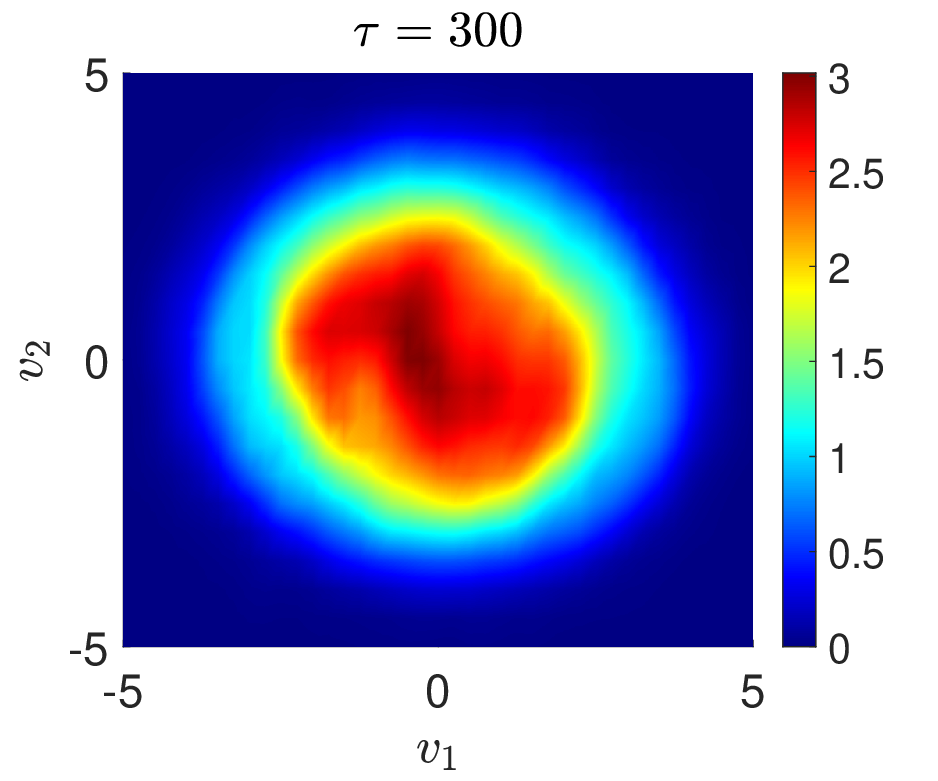,height=3.8cm,width=4.8cm}
\end{array}$$
\caption{Example 3. Contour plot of quantity $\chi(t,\bv)$ at different $\tau$ with $\eps=0.1$ for RS2-PIC.}\label{fig-3-5}
\end{figure}


\begin{table}[htbp]
\centering
\caption{Example 3. Time error of RS1-PIC (top) and RS2-PIC (bottom) for different $\eps$ and $h$.}
\begin{tabular}{cccccc}
\toprule
$err_{\rho}+err_{\rho_{\bv}} (T=1)$        &   $\triangle\tau=1/2^{4}$    &   $\triangle\tau=1/2^{5}$  &  $\triangle\tau=1/2^{6}$  &  $\triangle\tau=1/2^{7}$  &  $\triangle\tau=1/2^{8}$  \\
\midrule
$\eps=1/2$          &   1.03E-1   &   5.80E-2  &   3.02E-2   &   1.54E-2   &   7.96E-3    \\
order               &    -        &   0.83     &   0.94      &    0.97     &   0.95          \\
$\eps=1/2^{2}$      &   8.89E-2   &   4.65E-2  &   2.37E-2   &   1.20E-2   &   6.20E-3   \\
order               &   -         &   0.94     &    0.97     &   0.98      &   0.96           \\
$\eps=1/2^{3}$      &   1.00E-1   &   5.54E-2  &   2.88E-2   &   1.48E-2   &   7.76E-3  \\
order               &   -         &   0.86     &   0.94      &   0.96      &    0.94          \\
\toprule
$err_{\rho}+err_{\rho_{\bv}} (T=1)$        &   $\triangle\tau=1/2$    &   $\triangle\tau=1/2^{2}$  &  $\triangle\tau=1/2^{3}$  &  $\triangle\tau=1/2^{4}$  &  $\triangle\tau=1/2^{5}$  \\
\midrule
$\eps=1/2$          &   7.34E-2   &   1.91E-2   &   4.77E-3   &   1.99E-3   &   1.70E-3     \\
order               &    -        &    1.94     &    2.00     &    1.26     &    0.22        \\
$\eps=1/2^{2}$      &   4.18E-2   &   1.08E-2   &   2.54E-3   &   1.09E-3   &   1.15E-3    \\
order               &   -         &    1.95     &   2.09      &    1.22     &    -0.08        \\
$\eps=1/2^{3}$      &   5.37E-2   &   1.28E-2   &   3.56E-3   &   1.72E-3   &   1.70E-3     \\
order               &   -         &    2.07     &    1.85     &   1.05      &    0.01         \\
\bottomrule
\end{tabular}
\label{tab-3-1}
\end{table}

\begin{figure}[t!]
$$\begin{array}{cc}
\psfig{figure=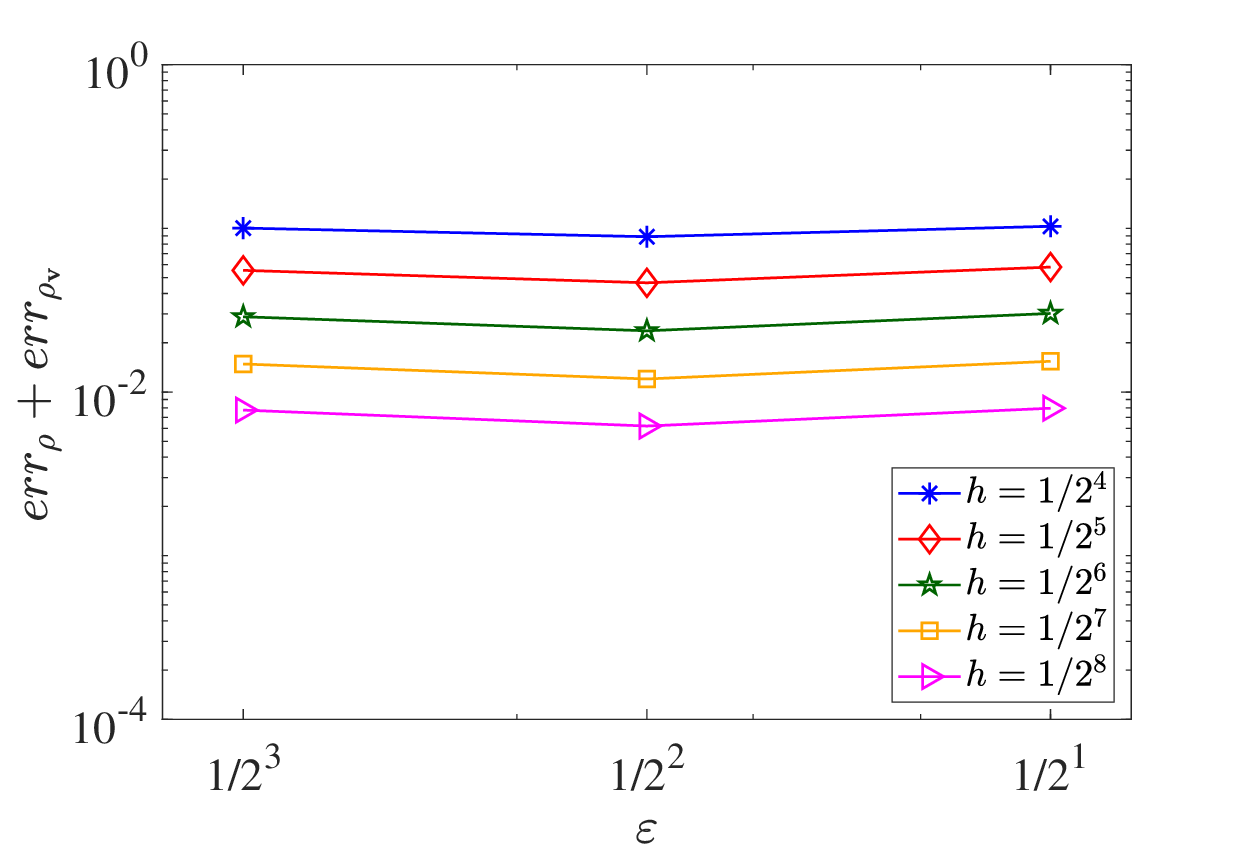,height=3.5cm,width=6.5cm}
\psfig{figure=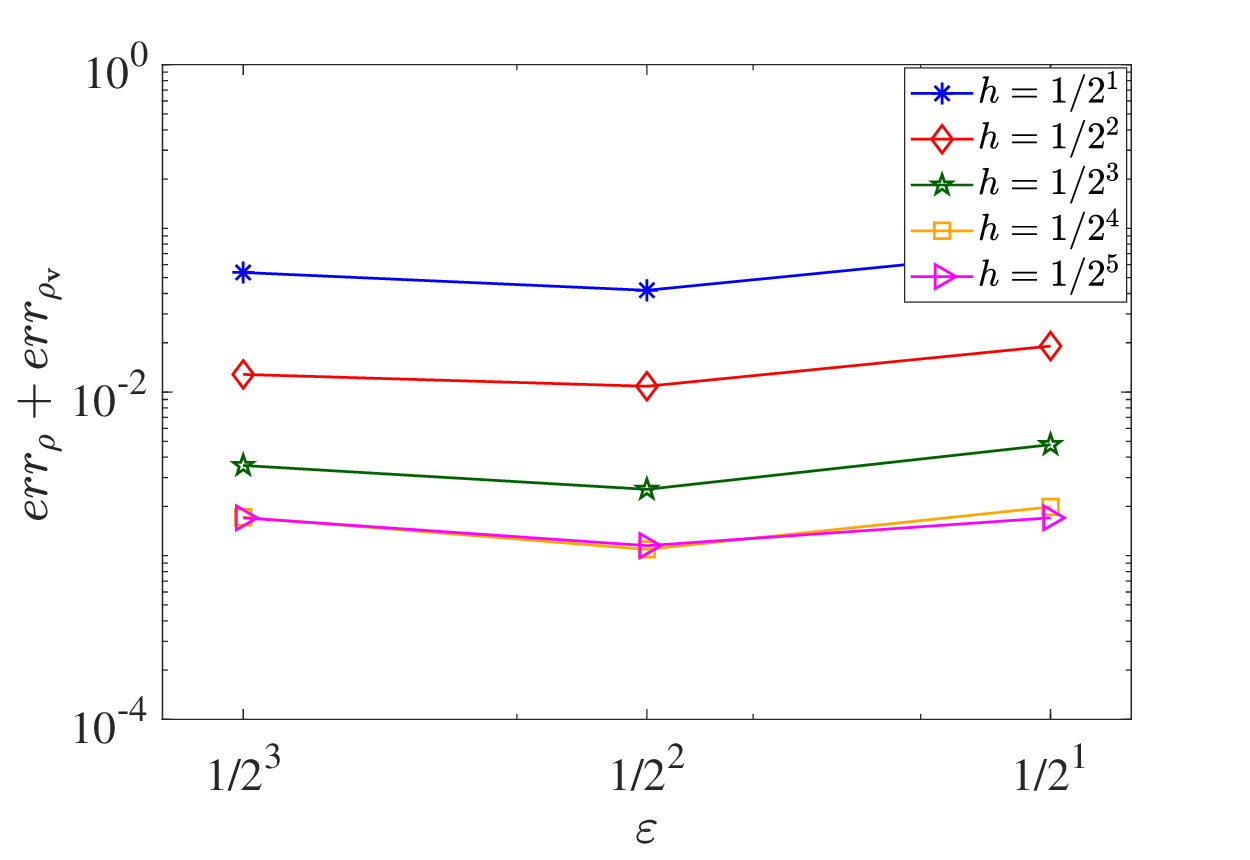,height=3.5cm,width=6.5cm} 
\end{array}$$
\caption{Example 3. Errors of RS1-PIC (left) and RS2-PIC (right) with respect to $\eps$ under different $h$ until $T=1$.}\label{fig-3-2}
\end{figure}

\begin{figure}[t!]
$$\begin{array}{cc}
\psfig{figure=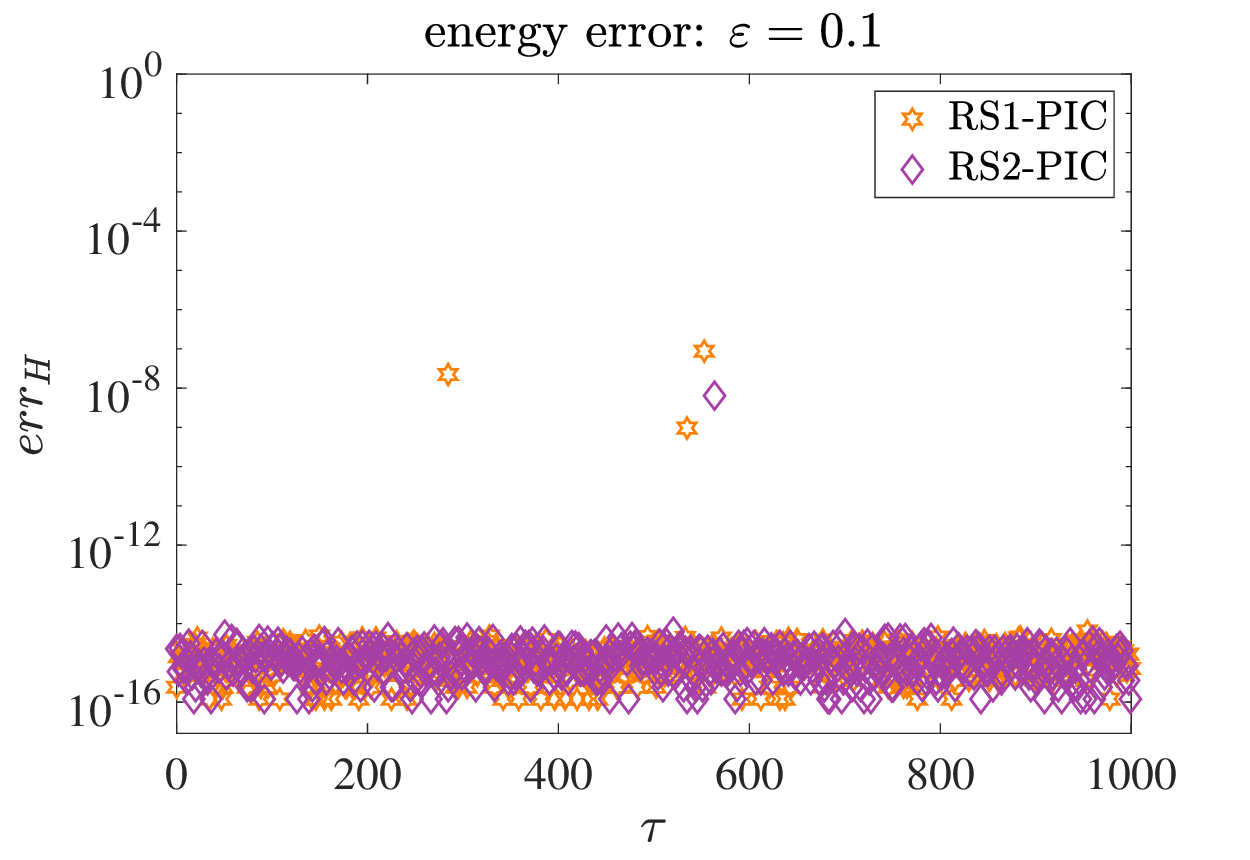,height=3.5cm,width=6.5cm}
\psfig{figure=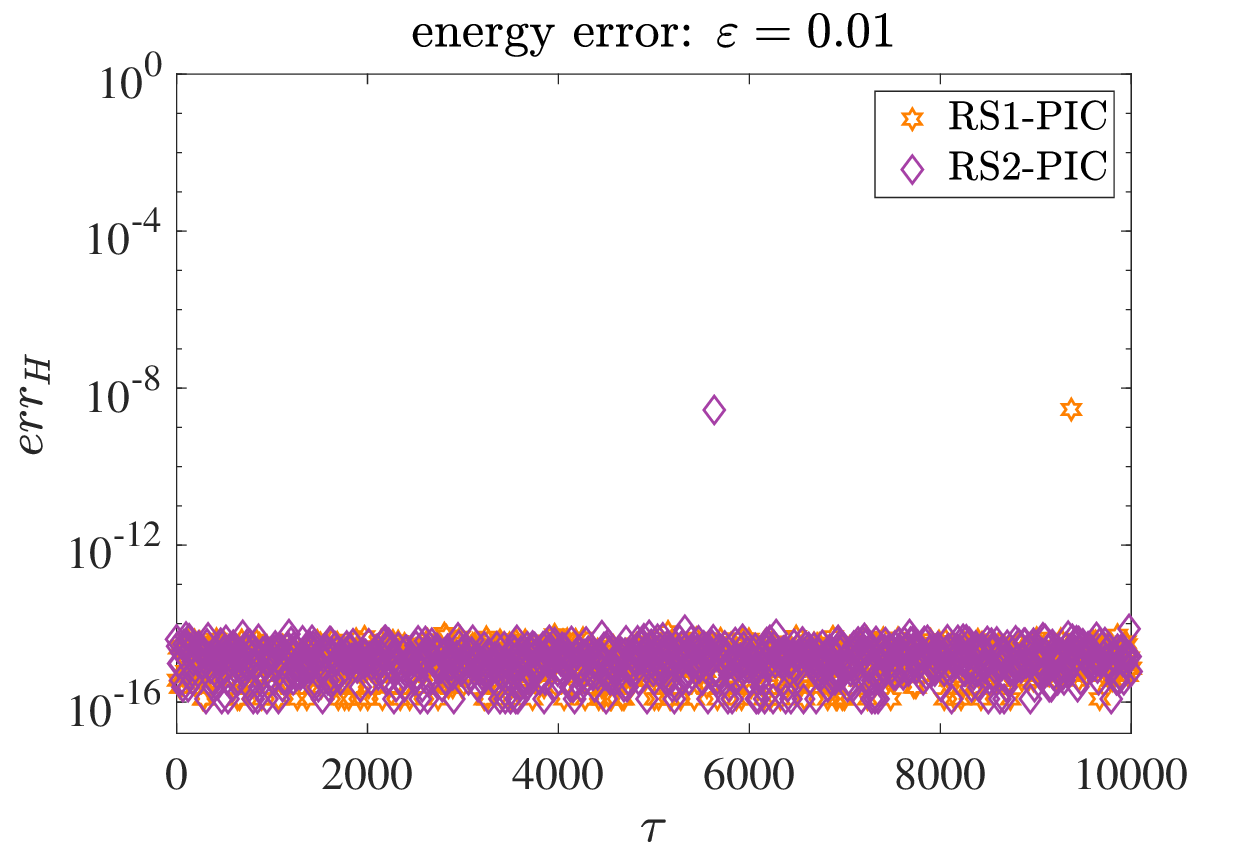,height=3.5cm,width=6.5cm}
\end{array}$$
\caption{Example 3. Energy error of RS1-PIC and RS2-PIC with step size $h=0.1$ under different $\eps$ until $T=100$.}\label{fig-3-3}
\end{figure}

\subsection{Vlasov-Poisson equation in Diffusion scaling}
Finally, we consider the Vlasov-Poisson equation in the Diffusion scaling (\cite{FR2,CLMZ}):
\begin{subequations}\label{vp-diffu}
\begin{align}
&\partial_{t}f(t,\bx,\bv)+\frac{\bv}{\eps}\cdot\nabla_{\bx}f(t,\bx,\bv)+\frac{1}{\eps}\left(\bE(t,\bx)+\frac{1}{\eps}\bv\times\bB(\bx)\right)\cdot\nabla_{\bv}f(t,\bx,\bv)=0, \label{vp-diffu-a} \\
&\nabla_{\bx}\cdot\bE(t,\bx)=\int_{\mathbb{R}^d}f(t,\bx,\bv)d\bv-n_{i}, \ f(0,\bx,\bv)=f_{0}(\bx,\bv), \ (\bx,\bv)\in \Omega\times\mathbb{R}^{d}, \ d\geq 2, \label{vp-diffu-b}  
\end{align}
\end{subequations}
with the energy conservation law \eqref{vp-ener}. In the PIC discretization, the characteristic equation of \eqref{vp-diffu} is given by
\begin{equation}\label{char-diffu}
\dot{\bx}(t)=\dfrac{\bv(t)}{\eps}, \ \dot{\bv}(t)=\dfrac{1}{\eps^{2}}\bv(t)\times \bB(\bx(t))+\dfrac{1}{\eps}\bE_{[\bx^{p}(t)]}(\bx(t)), \
\bx(0)=\bx_{0}, \ \bv(0)=\bv_{0}, \ 0\leq t\leq T.
\end{equation}
Introducing a time-scale transformation $\tau=t/\eps$, system \eqref{char-diffu} is changed into
\begin{equation}\label{diffu-2}
\ddot{\bx}(\tau)=\frac{1}{\eps}\dot{\bx}(\tau)\times \bB(\bx(\tau))+\bE(\bx(\tau)), \ \bx(0)=\bx_{0},\ \dot{\bx}(0)=\bv_{0}, \ \tau\in[0,T/\eps].
\end{equation}
We can also derive the RS1-PIC and RS2-PIC schemes analogously, yielding error bounds of $\mathcal{O}(\triangle\tau/\eps)$ and $\mathcal{O}(\triangle\tau^2/\eps^2)$, respectively. The performance of these schemes is then assessed through numerical simulations of the 2D system \eqref{vp-diffu} over the extended interval $\tau\in [0,T/\eps]$, where a fourth-order Runge-Kutta method with a time step of $10^{-5}$ provides a reference solution.

\textbf{Example 4}. In this example, we consider a Vlasov-Poisson system \eqref{vp-diffu} defined on a disk $\Omega=D(0,6)$ centered at the origin with a radius of $6$. The initial data is a Maxwellian in velocity, with a macroscopic density given by the sum of two Gaussians (\cite{FR2}):
\begin{align*}
f_{0}(\bx,\bv)=\frac{1}{16\pi^{2}}\left[\exp\left(-\frac{\norm{\bx-\tilde{\bx}_{0}}^2}{2}\right)+\exp\left(-\frac{\norm{\bx+\tilde{\bx}_{0}}^2}{2}\right) \right]\exp\left(-\frac{\norm{\bv}^2}{4}\right),\ (\bx,\bv)\in \Omega\times\mathbb{R}^{2},
\end{align*}
with $\tilde{\bx}_{0}=(1.5,-1.5)^{\intercal}$. Moreover, a non-homogeneous external strong magnetic field of the form $\frac{1}{\eps}b(\bx)=\frac{1}{\eps}+b_{1}(\bx)$ is considered, with $b_{1}(\bx)=\frac{10}{\sqrt{100-\norm{\bx}^2}}$. The spatial domain is discretized with $N_{x_{1}}=N_{x_{2}}=64$, the number of particles per cell is set to 50. 

\begin{figure}[t!]
$$\begin{array}{cc}
\psfig{figure=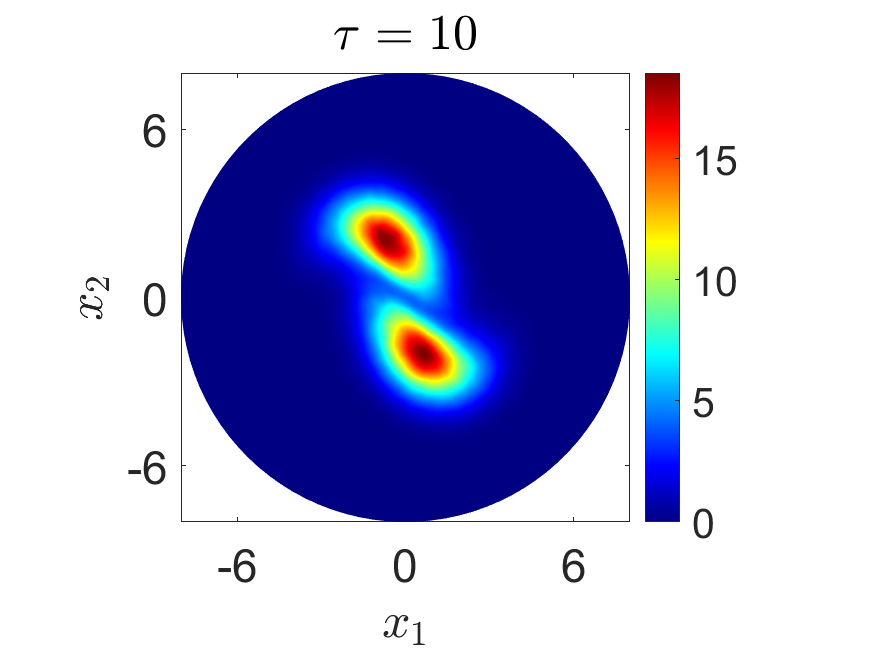,height=3.8cm,width=5cm}
\psfig{figure=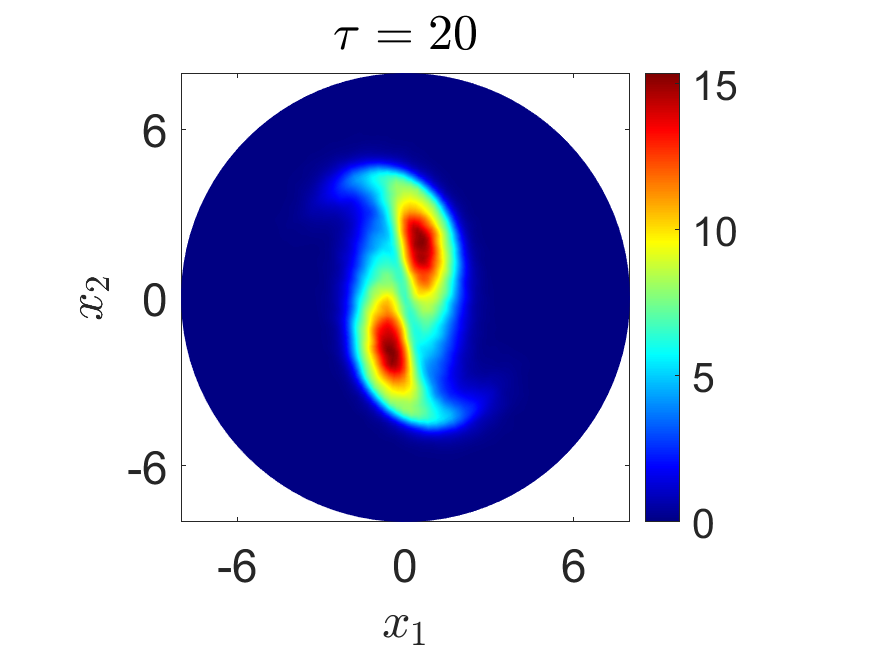,height=3.8cm,width=5cm}
\psfig{figure=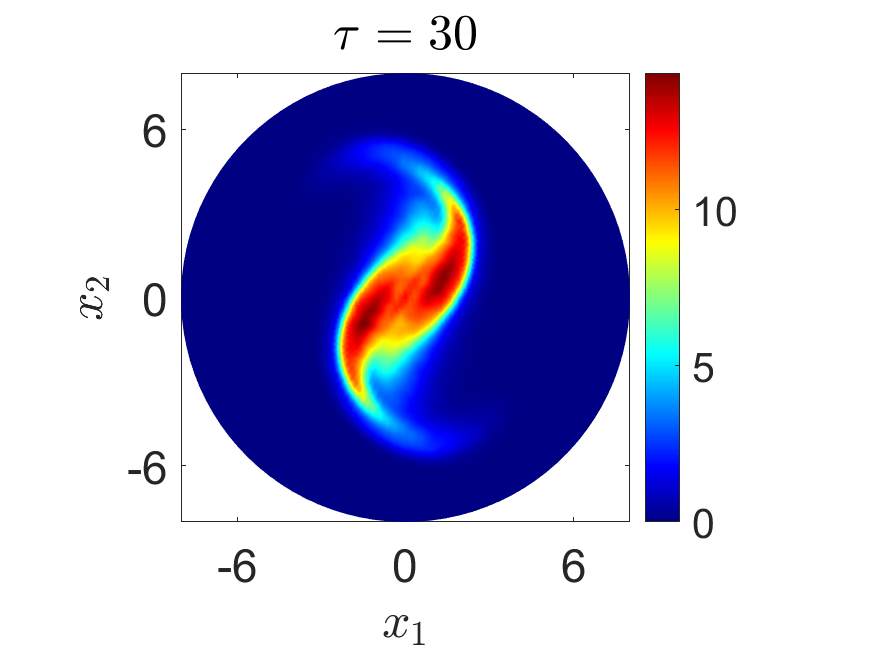,height=3.8cm,width=5cm}\\
\psfig{figure=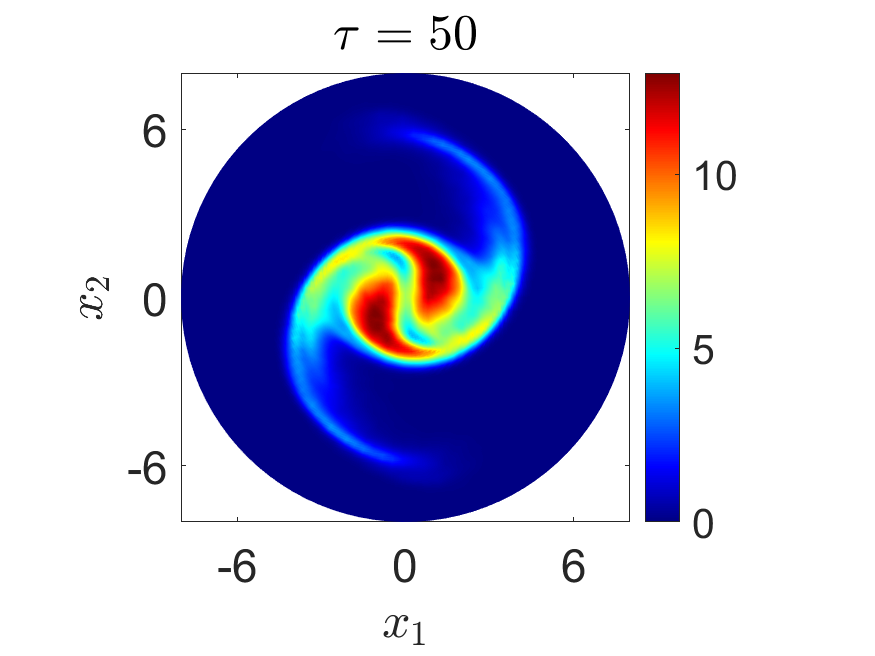,height=3.8cm,width=5cm}
\psfig{figure=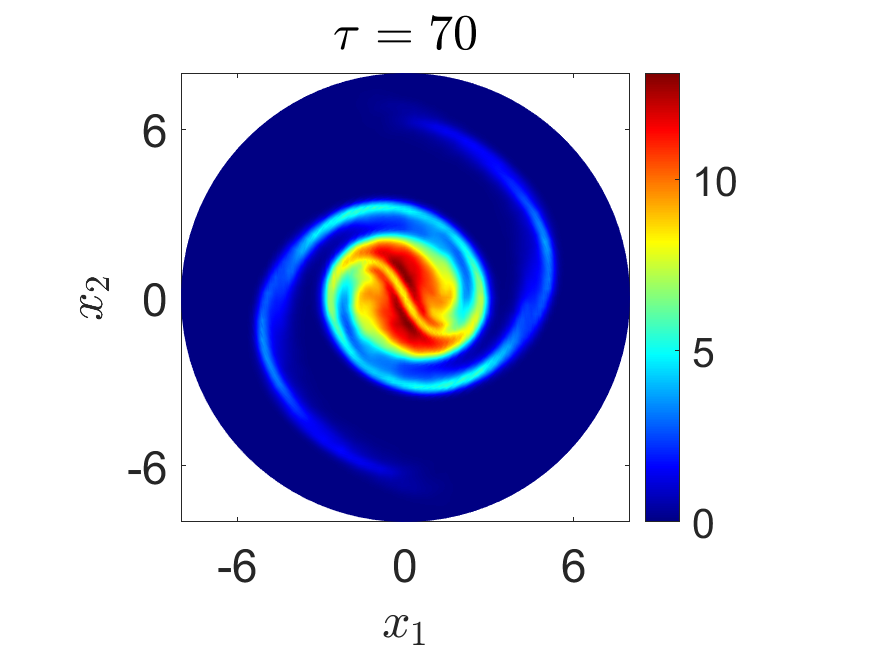,height=3.8cm,width=5cm}
\psfig{figure=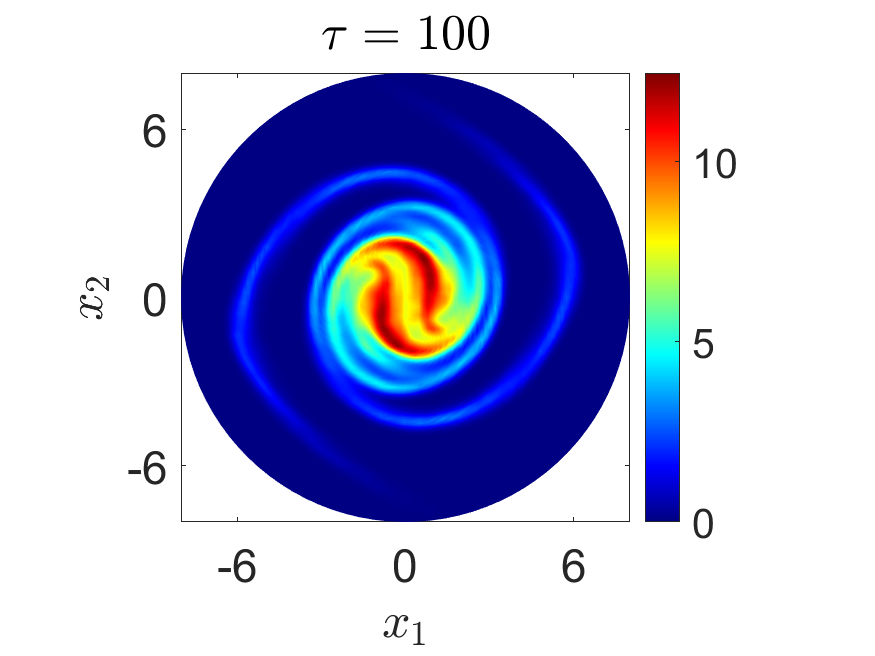,height=3.8cm,width=5cm}
\end{array}$$
\caption{Example 4. Snapshots of the time evolution of the density $\rho(t,\bx)$ until $\tau=100$ with strength of magnetic field $\eps=0.05$.}\label{fig-4-4}
\end{figure}
Figure \ref{fig-4-4} presents snapshots of the charge density evolution, illustrating the rapid merger of the two initial vortices for $\eps=0.05$ and a step size of $\triangle\tau=0.1$.
We then evaluate the convergence of $\rho$ and $\rho_{\mathbf{v}}$ for the Vlasov-Poisson system \eqref{vp-diffu} using the two schemes at $T=1$. As shown in Table \ref{tab-4-1} and Figure \ref{fig-4-2}, the RS1-PIC and RS2-PIC schemes achieve $\mathcal{O}(\triangle\tau/\eps)$ and $\mathcal{O}(\triangle\tau^2/\eps^2)$ accuracy, respectively, over long time scales.  Finally, the energy errors up to $T=100$ given in Figure \ref{fig-4-3} confirm the exceptional energy conservation properties of both methods.


\begin{table}[htbp]
\centering
\caption{Example 4. Time error of RS1-PIC (top) and RS2-PIC (bottom) for different $\eps$ and $h$.}
\begin{tabular}{cccccc}
\toprule
$err_{\rho}+err_{\rho_{\bv}} (T=1)$        &   $\triangle\tau=1/2^{2}$    &   $\triangle\tau=1/2^{3}$  &  $\triangle\tau=1/2^{4}$  &  $\triangle\tau=1/2^{5}$  &  $\triangle\tau=1/2^{6}$  \\
\midrule
$\eps=1/2$          &   1.37E-1   &   6.01E-2   &   3.67E-2   &    2.02E-2     &   1.09E-2      \\
order               &    -        &    1.19     &    0.71     &    0.86        &    0.89       \\
$\eps=1/2^{2}$      &   4.69E-1   &   1.25E-1   &   4.19E-2   &    2.43E-2     &   1.85E-2    \\
order               &    -        &   1.91      &    1.57     &     0.79       &    0.40       \\
$\eps=1/2^{3}$      &   1.14E+0   &   3.38E-1   &   7.96E-2   &   3.74E-2      &   2.73E-2    \\
order               &    -        &    1.76     &    2.09     &    1.09        &    0.45       \\
\toprule
$err_{\rho}+err_{\rho_{\bv}} (T=1)$        &   $\triangle\tau=1/2^{2}$    &   $\triangle\tau=1/2^{3}$  &  $\triangle\tau=1/2^{4}$  &  $\triangle\tau=1/2^{5}$  &  $\triangle\tau=1/2^{6}$  \\
\midrule
$\eps=1/2$          &   1.23E-1   &   1.66E-2   &   3.26E-3   &   3.71E-3   &   4.15E-3   \\
order               &    -        &    2.89     &    2.35     &    -0.19    &    -0.16         \\
$\eps=1/2^{2}$      &   4.72E-1   &   1.12E-1   &   1.41E-2   &   9.90E-3   &   1.42E-2     \\
order               &    -        &    2.07     &    2.99     &    0.51     &   -0.52         \\
$\eps=1/2^{3}$      &   1.15E+0   &   3.40E-1   &   6.45E-2   &   2.34E-2   &   2.11E-2   \\
order               &    -        &    1.75     &    2.40     &    1.46     &    0.15        \\
\bottomrule
\end{tabular}
\label{tab-4-1}
\end{table}

\begin{figure}[t!]
$$\begin{array}{cc}
\psfig{figure=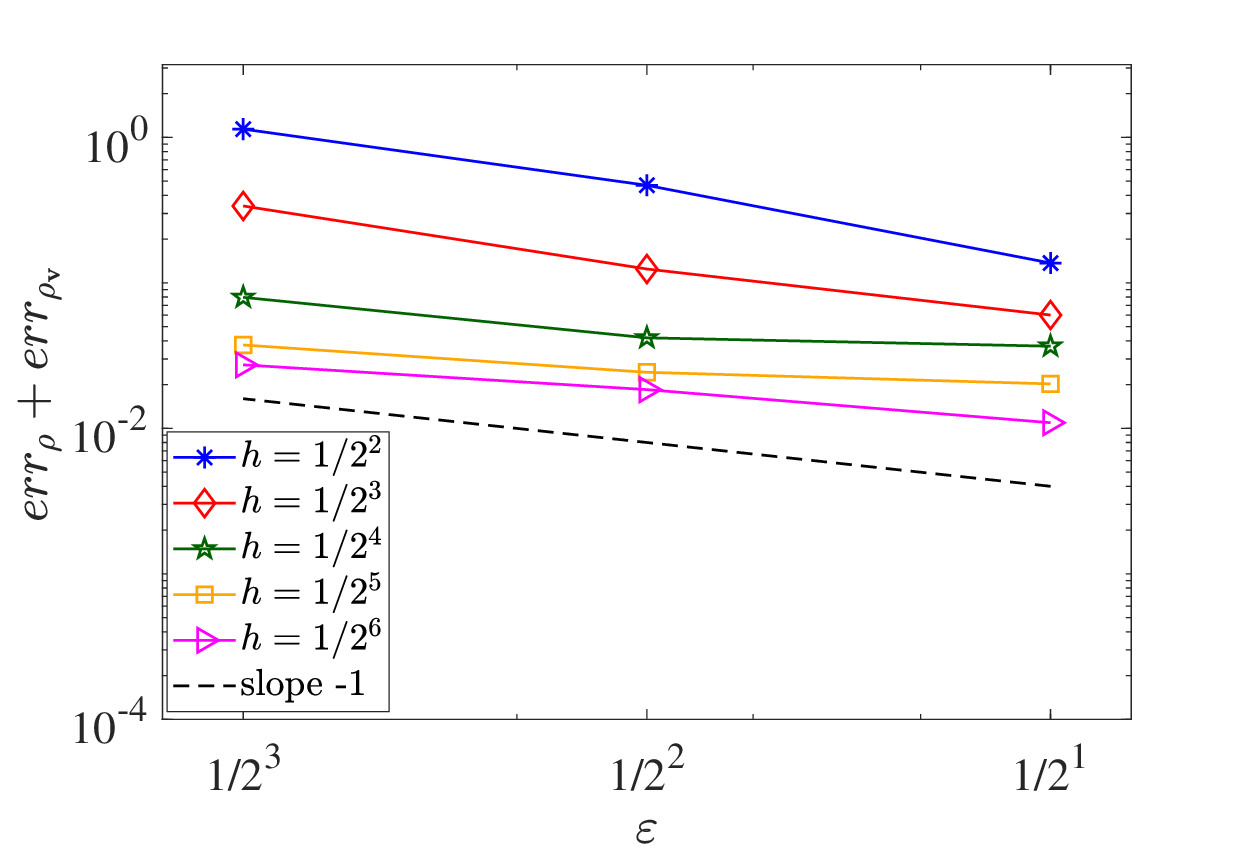,height=3.5cm,width=6.5cm}
\psfig{figure=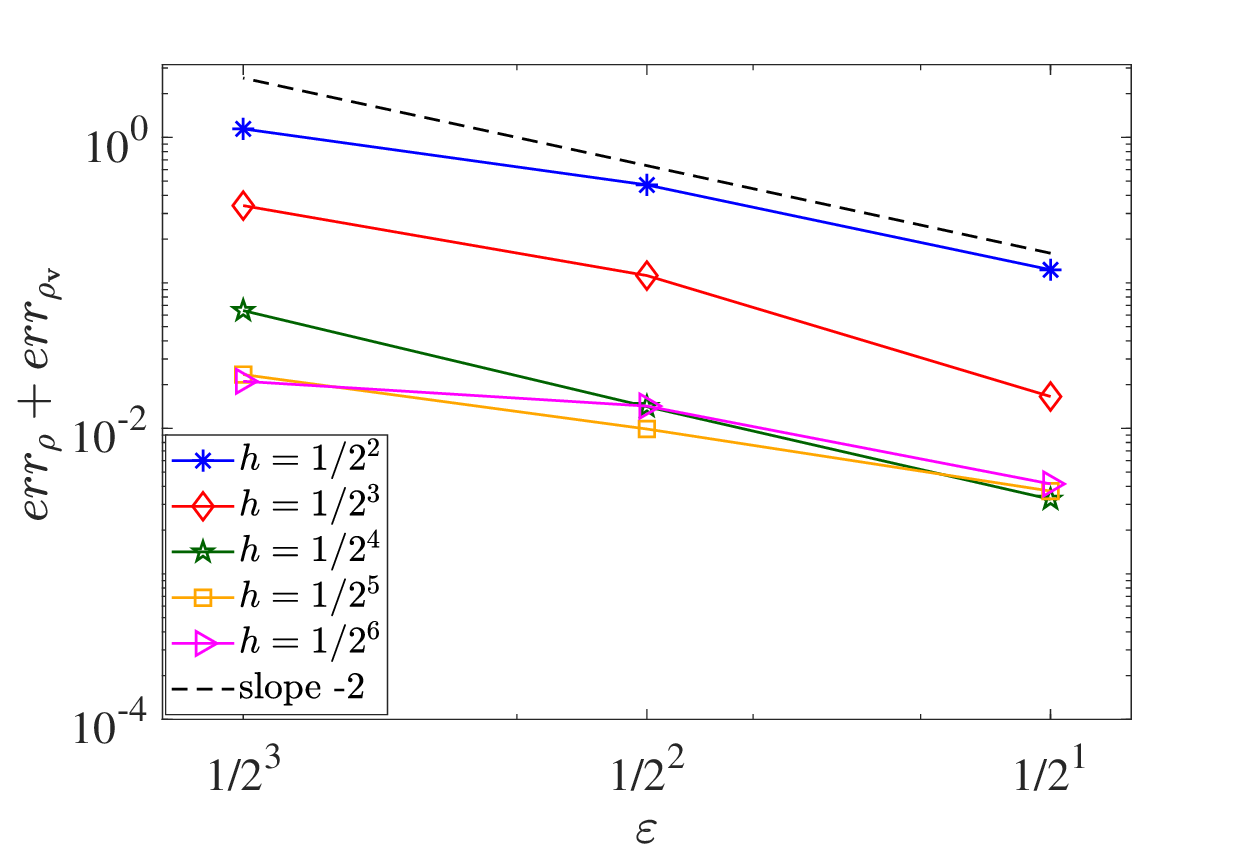,height=3.5cm,width=6.5cm}
\end{array}$$
\caption{Example 4. Errors of RS1-PIC (left) and RS2-PIC (right) with respect to $\eps$ under different $h$ until $T=1$.}\label{fig-4-2}
\end{figure}

\begin{figure}[t!]
$$\begin{array}{cc}
\psfig{figure=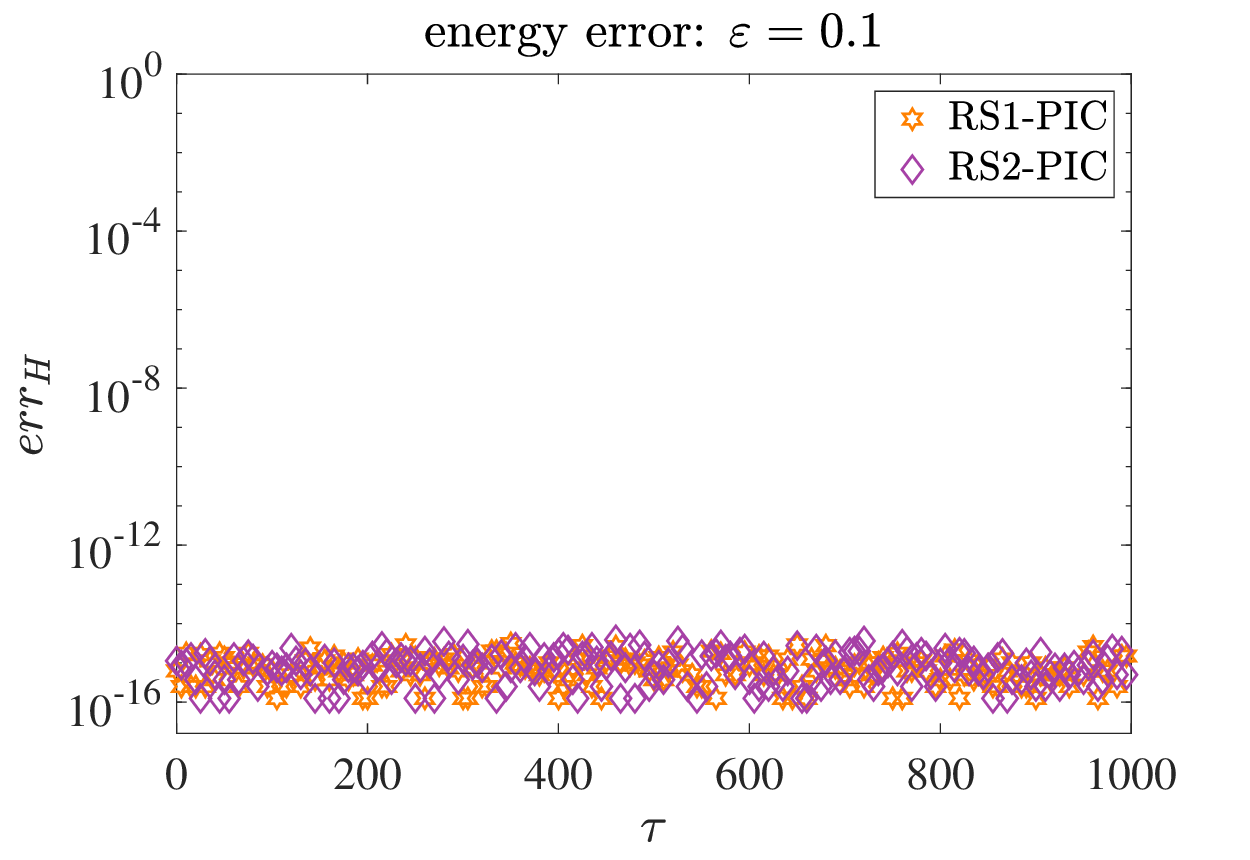,height=3.5cm,width=6.5cm}
\psfig{figure=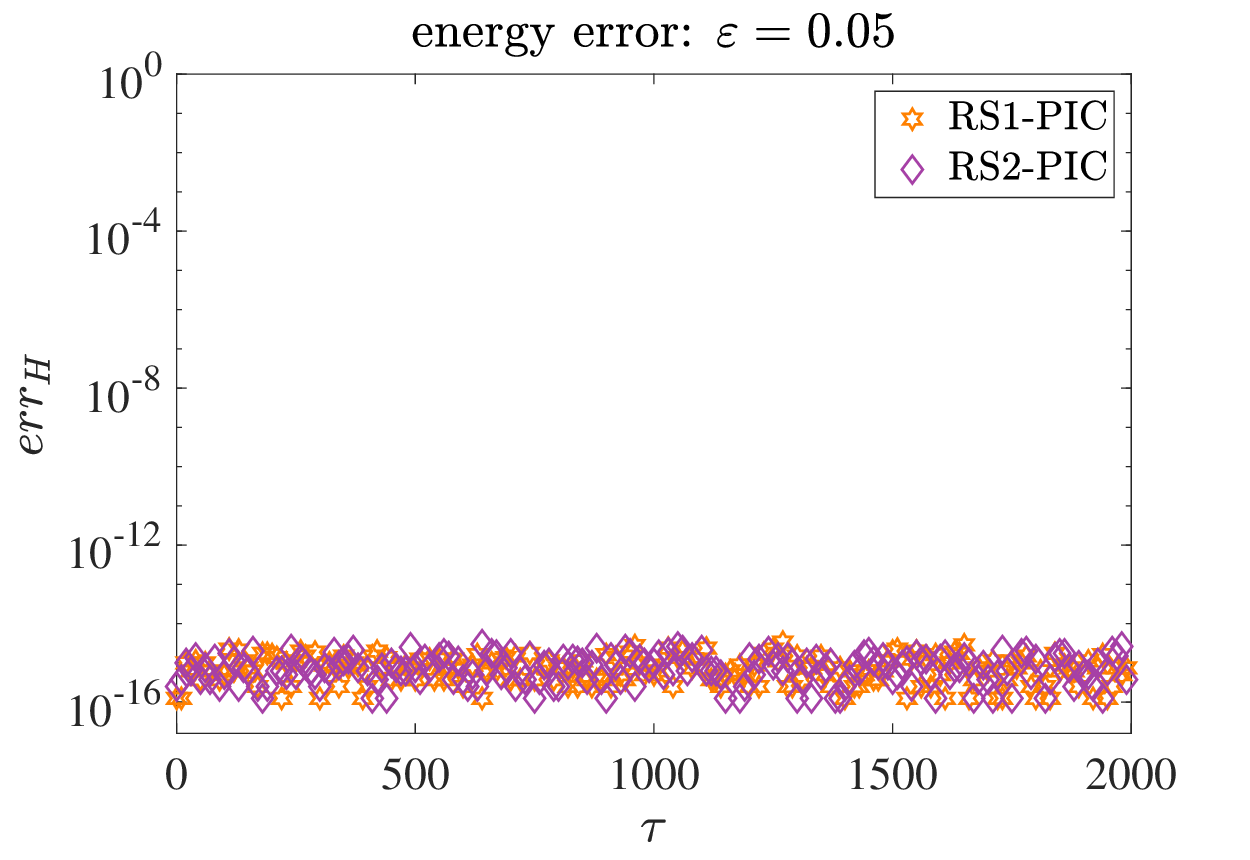,height=3.5cm,width=6.5cm}
\end{array}$$
\caption{Example 4. Energy errors of RS1-PIC and RS2-PIC with step size $h=0.1$ under different $\eps$ until $T=100$.}\label{fig-4-3}
\end{figure}


\section{Conclusion}\label{sec:con}
This work presents a novel class of explicit relaxation Particle-in-Cell (ER-PIC) methods for the Vlasov-Poisson system in the presence of a strong magnetic field. These methods are built upon a splitting framework and are distinguished by a unique relaxation parameter. Its dynamic adjustment preserves the accuracy of the base scheme while guaranteeing exact energy conservation. A rigorous error analysis for the Strang-type ER-PIC scheme is also provided, utilizing an averaging technique.
A primary direction for future research is the design of explicit methods that can simultaneously achieve high-order uniform accuracy and exact energy conservation for the Vlasov-Poisson system under  a strong magnetic field.

\bibliographystyle{model-num-names}

\end{document}